\newcommand{\arxiv}[1]{\href{http://arxiv.org/abs/#1}{\tt arXiv:\nolinkurl{#1}}}
\newcommand{\googlebooks}[1]{(preview at \href{http://books.google.com/books?id=#1}{google books})}
\theoremstyle{plain}
\newtheorem{prop}{Proposition}[subsection]
\newtheorem{thm}[prop]{Theorem}
\newtheorem{lem}[prop]{Lemma}
\newtheorem*{lem*}{Lemma}
\newtheorem{cor}[prop]{Corollary}
\newtheorem*{cor*}{Corollary}
\newtheorem*{defn*}{Definition}             
\newtheorem*{question*}{Question}
\newenvironment{rem}{\vspace{0.3cm}\noindent\textsl{Remark.}}{}  
\newtheorem{rem*}[prop]{Remark}
\numberwithin{equation}{section}
\newcommand{\noop}[1]{}
\def\clap#1{\hbox to 0pt{\hss#1\hss}}
\renewcommand{\imath}{\mathfrak{i}}
\renewcommand{\jmath}{\mathfrak{j}}
\newcommand{\floor}[1]{\left\lfloor#1\right\rfloor}
\DeclareMathOperator{\End}{End}
\DeclareMathOperator{\Hom}{Hom}
\DeclareMathOperator{\Rad}{Rad}
\DeclareMathOperator{\ad}{ad}
\DeclareMathOperator{\Ker}{Ker}
\DeclareMathOperator{\add}{add}
\DeclareMathOperator{\Ind}{Ind}
\newcommand{\C}{\mathbb{C}}
\newcommand{\Z}{\mathbb{Z}}
\newcommand{\N}{\mathbb{N}}
\tikzstyle{mid>}=[decoration={markings, mark=at position 0.5 with {\arrow{>}}}, postaction={decorate}]
\tikzstyle{mid<}=[decoration={markings, mark=at position 0.5 with {\arrow{<}}}, postaction={decorate}]
\tikzstyle{upper>}=[decoration={markings, mark=at position 0.8 with {\arrow{>}}}, postaction={decorate}]
\tikzstyle{upper<}=[decoration={markings, mark=at position 0.8 with {\arrow{<}}}, postaction={decorate}]
\tikzstyle{lower>}=[decoration={markings, mark=at position 0.2 with {\arrow{>}}}, postaction={decorate}]
\tikzstyle{lower<}=[decoration={markings, mark=at position 0.2 with {\arrow{<}}}, postaction={decorate}]
\tikzset{amp/.style = {regular polygon, regular polygon sides=3,
              draw, fill=white, text width=1em,
              inner sep=1mm, outer sep=0mm,
              shape border rotate=0, scale=0.4}}
\tikzset{coamp/.style = {regular polygon, regular polygon sides=3,
              draw, fill=white, text width=1em,
              inner sep=1mm, outer sep=0mm,
              shape border rotate=180, scale=0.4}}
\newcommand{\Alt}[1]{{\textstyle\bigwedge^{#1}}}
\newcommand{\one}{1}
\def\sl{{\mathfrak{sl}}}
\def\sp{{\mathfrak{sp}}}
\def\Sp{{\mathcal{S}p}}
\def\g{{\mathfrak{g}}}
\def\gl{{\mathfrak{gl}}}
\def\dU{\dot{{\mathcal{U}}}}
\def\Rep{\mathcal{R}ep}
\def\Perm{\mathcal{P}erm}
\def\Prm{\mathcal{P}rm}
\newcommandx{\NewEnvironx}[5][2,3]{%
  \expandafter\newcommandx\csname start#1\endcsname[#2][#3]{#4}%
  \NewEnviron{#1}{\csname start#1\expandafter\endcsname\BODY #5}}
\newcommand{\ladderX}{1.5}
\newcommand{\ladderY}{1.5}
\newcommand{\ladderR}{0.6}
\newcommand{\laddercoordinates}[2]{
\foreach \x in {0,...,#1} {
	\foreach \y in {0,...,#2} {
		\coordinate (l\x\y) at (\x * \ladderX, \y * \ladderY);
		\coordinate (u\x\y) at ($(l\x\y)+\ladderR*(0,\ladderY)$);
		\coordinate (d\x\y) at ($(l\x\y)+(0,\ladderY)-\ladderR*(0,\ladderY)$);
	}
}
}
\newcommand{\ladderEn}[5]{
\draw[mid>] (l#1#2) -- (d#1#2);
\draw[mid>] (d#1#2) -- ($(l#1#2)+(0,\ladderY)$) node[left] {#3};
\draw[mid>] ($(l#1#2)+(\ladderX,0)$) -- ($(u#1#2)+(\ladderX,0)$);
\draw[mid>] ($(u#1#2)+(\ladderX,0)$) -- ($(l#1#2)+(\ladderX,\ladderY)$) node[right] {#4};
\draw[mid>] (d#1#2) --node[above]{#5} ($(u#1#2)+(\ladderX,0)$);
}
\newcommand{\ladderE}[4]{\ladderEn{#1}{#2}{#3}{#4}{}}
\newcommand{\ladderFn}[5]{
\draw[mid>] (l#1#2) -- (u#1#2);
\draw[mid>] (u#1#2) -- ($(l#1#2)+(0,\ladderY)$) node[left] {#3};
\draw[mid>] ($(l#1#2)+(\ladderX,0)$) -- ($(d#1#2)+(\ladderX,0)$);
\draw[mid>] ($(d#1#2)+(\ladderX,0)$) -- ($(l#1#2)+(\ladderX,\ladderY)$) node[right] {#4};
\draw[mid>] ($(d#1#2)+(\ladderX,0)$) --node[above]{#5} (u#1#2);
}
\newcommand{\ladderF}[4]{\ladderFn{#1}{#2}{#3}{#4}{}}
\newcommand{\ladderIn}[3]{\draw[mid>] (l#1#2) -- +($#3*(0,\ladderY)$);}
\newcommand{\ladderI}[2]{\ladderIn{#1}{#2}{1}}
\newcommand{\fuse}[3]{\tikz[baseline=0.5cm]{
\coordinate (z1) at (0,0);
\coordinate (z2) at (1,0);
\coordinate (c) at (0.5,0.5);
\coordinate (e) at (0.5,1);
\draw[mid>] (z1) node[below] {$#1$} -- (c);
\draw[mid>] (z2) node[below] {$#2$} -- (c);
\draw[mid>] (c) -- (e) node[above] {$#3$};
}}
\newcommand{\fork}[3]{\tikz[baseline=0.5cm]{
\coordinate (z1) at (0,1);
\coordinate (z2) at (1,1);
\coordinate (c) at (0.5,0.5);
\coordinate (e) at (0.5,0);
\draw[mid<] (z1) node[above] {$#1$} -- (c);
\draw[mid<] (z2) node[above] {$#2$} -- (c);
\draw[mid<] (c) -- (e) node[below] {$#3$};
}}
\def\semicolon{;}
\def\applytolist#1{
    \expandafter\def\csname multi#1\endcsname##1{
        \def\multiack{##1}\ifx\multiack\semicolon
            \def\next{\relax}
        \else
            \csname #1\endcsname{##1}
            \def\next{\csname multi#1\endcsname}
        \fi
        \next}
    \csname multi#1\endcsname}
\def\calc#1{\expandafter\def\csname c#1\endcsname{{\mathcal #1}}}
\definecolor{dark-red}{rgb}{0.7,0.25,0.25}
\definecolor{dark-blue}{rgb}{0.15,0.15,0.55}
\definecolor{medium-blue}{rgb}{0,0,0.65}
\title[Presentations of categories of modules using the CKM principle]{Presentations of categories of modules using the Cautis-Kamnitzer-Morrison principle}
\date{} 
\author{Giulian Wiggins}
\address{School of Mathematics and Statistics\\ University of Sydney \\ Australia}
\email{g.wiggins@maths.usyd.edu.au}
\begin{document}

\maketitle

\begin{abstract} 
We use duality theorems to obtain presentations of some categories of modules. To derive these presentations we generalize a result of Cautis-Kamnitzer-Morrison \cite{mainpaper}:

Let $\g$ be a reductive Lie algebra, and $A$ an algebra, both over $\C$. Consider a $(\g , A)$-bimodule $P$ in which 
\begin{enumerate}[label=(\alph*)]
\item $P$ has a multiplicity free decomposition into irreducible  $(\g , A)$-bimodules.
\item $P$ is ``saturated" i.e. for any irreducible $\g$-module $V$, if every weight of $V$ is a weight of $P$, then $V$ is a submodule of $P$.
\end{enumerate}
We show that statements (a) and (b) are necessary and sufficient conditions for the existence of an isomorphism of categories between the full subcategory of $\Rep A$ whose objects are $\g$-weight spaces of $P$, and a quotient of the category version of Lusztig's idempotented form, $\dU \g$, formed by setting to zero all morphisms factoring through a collection of objects in $\dU \g$ depending on $P$. This is essentially a categorical version of the identification of generalized Schur algebras with quotients of Lusztig's idempotented forms given in \cite{doty}.

Applied to Schur-Weyl Duality we obtain a diagrammatic presentation of the full subcategory of $\Rep S_d$ whose objects are direct sums of permutation modules, as well as an explicit description of the $\otimes$-product of morphisms between permutation modules.
Applied to Brauer-Schur-Weyl Duality we obtain diagrammatic presentations of subcategories of $\Rep \mathcal{B}_{d}^{(- 2n)}$ and $\Rep \mathcal{B}_{r,s}^{(n)}$ whose Karoubi completion is the whole of 
$\Rep \mathcal{B}_{d}^{(- 2n)}$ and $\Rep \mathcal{B}_{r,s}^{(n)}$ respectively. 
\end{abstract}

\tableofcontents

\section{Introduction}

Let $\g$ denote a reductive Lie algebra over $\C$. Let $\g = \mathfrak{s} \oplus \mathfrak{a}$ be the decomposition of $\g$ into semisimple and abelian parts. Let $\Delta = \{\alpha_1 , \ldots , \alpha_n \} $ be a choice of positive simple roots for $\mathfrak{s}$, and $\Lambda$ the integral weight lattice for $\mathfrak{g}$.
Let $\{e_i , f_i , h_i \}_{i = 1 , \ldots , n}$ be the Chevalley generators of $\mathfrak{s}$ with respect to $\Delta$, and $(a_{ij})$ the Cartan matrix of $\mathfrak{s}$ i.e. $h_i = \alpha^{\vee}_i$ and $a_{ij} = \alpha_{i} (h_j)$.

Consider the idempotented form of the enveloping algebra $U\g$, which we regard as the $\C$-linear category, $\dot{\mathcal{U}} \g$, with
\begin{itemize}
\item
Objects: Integral weights $\lambda \in \Lambda$.
\item
Morphisms: Let $1_{\lambda}$ denote the identity on $\lambda$. Let $1_{\mu} \dot{\mathcal{U}} \g 1_{\lambda}$ denote the space of morphisms from $\lambda$ to $\mu$. The morphisms are generated by 
$E_{i} 1_{\lambda} \in 1_{\lambda + \alpha_i} \dot{\mathcal{U}} \g  1_{\lambda}$ and $F_{i} 1_{\lambda} \in 1_{\lambda - \alpha_i} \dot{\mathcal{U}} \g  1_{\lambda}$ for $ 1 \leq i \leq n$. These morphisms satisfy the relations 
\begin{align}
E_i F_i 1_{\lambda} &= F_i E_i 1_{\lambda} + \lambda (h_i) 1_{\lambda}& 
\label{Ug1} \displaybreak[1]\\
E_i F_j 1_{\lambda} &= F_j E_i 1_{\lambda} &\text{if } i \neq j 
\label{Ug2} \displaybreak[1]\\
\ad(E_j)^{1-a_{ij}} E_i 1_{\lambda} &= 0&
\text{ and likewise with $F$ replacing $E$} 
\label{Ug3} 
\end{align}
\end{itemize} 

This paper only considers $\g$-modules with an integral weight space decomposition. For this reason we \emph{henceforth use the term $\g$-module to only refer to those $\g$-modules with an integral weight space decomposition}.

Let $Vec_{\C}$ denote the category of vector spaces over $\C$. Any $\g$-module $P$ is equivalent to a $\C$-linear functor $\mathcal{F}_P: \dU \g \rightarrow Vec_{\C}$ mapping objects $\lambda \in \Lambda$ to the $\lambda$-weight space of $P$, $P_{\lambda}$, and each morphism $E_i 1_{\lambda} \in 1_{\lambda + \alpha_i} \dot{\mathcal{U}} \g  1_{\lambda}$ to the map
\[
P_{\lambda} \rightarrow P_{\lambda + \alpha_i} : v \mapsto e_i v
\]
and likewise with the $F_i 1_{\lambda}$.

For any $\g$-module $P$ write $\Pi(P)$ for the set of weights of $P$.
Define  $\dU^{P} \g$ to be the $\C$-linear category defined
\begin{itemize}
\item
Objects: Weights of $P$.
\item
Morphisms: The space, $1_{\mu} \dot{\mathcal{U}}^P \g 1_{\lambda}$, of morphisms from $\lambda$ to $\mu$ is defined as the quotient of $1_{\mu} \dot{\mathcal{U}} \g 1_{\lambda}$ by the space of morphisms $f \in 1_{\mu} \dot{\mathcal{U}} \g 1_{\lambda}$ that factor through an object $\nu \notin \Pi(P)$.
\end{itemize}

Doty-Giaquinto-Sullivan \cite{doty4} introduced the term \textbf{saturated $\g$-module} to describe those $\g$-modules $P$ with the property that for every $\g$-dominant weight 
$\lambda \in \Pi (P)$, the irreducible module of highest weight $\lambda$ is a submodule of $P$. 

This paper gives two main results regarding saturated $\g$-modules:
\begin{itemize}
\item
(Doty Criterion, Lemma \ref{Sat}) The functor $\dU^{P} \g \rightarrow Vec_{\C}$ induced by $\mathcal{F}_P: \dU \g \rightarrow Vec_{\C}$ is faithful if and only if $P$ is saturated.
\item
(CKM Principle, Theorem \ref{CKM}) Let $A$ be an algebra over $\C$, and $P$ a finite dimensional $(\g, A)$-bimodule with multiplicity free decomposition
\begin{equation}\label{mf}
P = \bigoplus_{i \in \chi} V^{i} \otimes M^{i}
\end{equation}
where the $V^{i}$ are irreducible $\g$-modules, and the $M^{i}$ are irreducible $A$-modules.

If $P$ is a saturated $\g$-module, then the category $\dU^{P} \g$ is isomorphic to the full subcategory of $\Rep A$ whose objects are the $\g$-weight spaces of $P$. Moreover, if $A$ is semisimple and acts on $P$ faithfully then
$\Rep A$ is equivalent to the Karoubi completion of the additive closure of $\dU^{P} \g$ (Corollary \ref{karckm}). 
\end{itemize}

\begin{rem}
The Doty Criterion should be viewed as a categorical version of the correspondence between  generalized Schur algebras and certain quotients of Lusztig's idempotented forms given in
(\cite{doty}, Theorem 4.2). Indeed the Doty Criterion gives a condition for an isomorphism of algebras 
\begin{equation}\label{dotycorrespondence}
\bigoplus_{\lambda , \mu \in \Pi (P)} 1_{\mu} \dot{\mathcal{U}}^P \g  1_{\lambda}
\cong
\bigoplus_{\lambda , \mu \in \Pi (P)} \mathcal{F}_{P} (1_{\mu} \dot{\mathcal{U}}^P \g  1_{\lambda}).
\end{equation}
Whenever $P$ is saturated, the right hand side of (\ref{dotycorrespondence}) is the generalized Schur algebra $\mathbf{S}(\Pi (P))$ as defined in \cite{donkin}. Moreover this isomorphism is exactly that given by (\cite{doty}, Theorem 4.2). 
\end{rem}

The CKM Principle is the special case of the Doty Criterion in the case where $P$ is a $(\g, A)$-bimodule. Cautis-Kamnitzer-Morrison \cite{mainpaper} proved this result for (the quantum analog of) the $U(\gl_m) \otimes U(\sl_n)$
-module $\Alt{\bullet} (\C^n \otimes \C^m)$. This has a multiplicity free decomposition by skew-Howe duality. From this they give a presentation of the full subcategory of $\Rep \sl_n$
$\otimes$-generated by the fundamental representations of $\sl_n$.

In Section \ref{ckm} we give elementary proofs of the Doty Criterion and CKM Principle.

The following multiplicity-free commuting actions are saturated (\cite{doty4}, Section 6),
\begin{align}
\gl_n \curvearrowright &\textstyle\bigotimes^{d} \C^n \curvearrowleft \C[S_d]
\label{eg1}
\displaybreak[1]\\
\sp_{2n} \curvearrowright &\textstyle\bigotimes^{d} \C^{2n} \curvearrowleft \mathcal{B}_{d}^{(-2n)} 
\label{eg2}
\displaybreak[1]\\
\mathfrak{o}_{2n} \curvearrowright &\textstyle\bigotimes^{d} \C^{2n} \curvearrowleft \mathcal{B}_{d} ^{(2n)} 
\label{eg3}
\displaybreak[1]\\
\gl_{n}  \curvearrowright \textstyle\bigotimes^{r} \C^n &\otimes \textstyle\bigotimes^{s} \C^{*n} \curvearrowleft \mathcal{B}_{r,s}^{(n)}
\label{eg4}
\end{align}
where 
$\mathcal{B}_{d}^{\delta}$ is the Brauer algebra over $\C$ spanned by $d$-diagrams parametrized by $\delta$ (as defined in \cite{origin}), and
$\mathcal{B}_{r,s}^{(n)} \subset \mathcal{B}_{r+s}^{(n)}$ is the walled Brauer algebra (as defined in \cite{mixsw}). These definitions and the commuting actions in (\ref{eg2}), (\ref{eg3}), (\ref{eg4}) are recalled in Section \ref{BCKM}.

Say that a category, $\mathcal{D}$, is a \textbf{pre-Karoubi subcategory} of a category, $\mathcal{C}$, if $\mathcal{C}$ is equivalent to the additive closure of the Karoubi completion of $\mathcal{D}$. In this case we call any presentation of $\mathcal{D}$ a 
\textbf{pre-Karoubi presentation} of $\mathcal{C}$.

If $n \geq d$ then the right action in (\ref{eg1}) is faithful. We use the CKM principle to give a pre-Karoubi presentation of $\Rep S_d$ (Theorem \ref{equiv}). 

If $2n \geq d-1$ then the right actions in (\ref{eg2}) and (\ref{eg3}) are faithful \cite{brown}. We give a pre-Karoubi presentation of $\Rep  \mathcal{B}_{d}^{(-2n)}$ for $2n \geq d-1$ (Proposition \ref{symsw}). 

If $n \geq r+s$ then the right action in (\ref{eg4}) is faithful \cite{mixsw}. We give a  pre-Karoubi presentation of $\Rep \mathcal{B}_{r,s}^{(n)}$ for $n \geq r+s$ (Proposition \ref{msw}). 

\subsection{A diagrammatic presentation of $\Perm (S_d)$}
Let us explain in more detail the pre-Karoubi presentation of $\Rep S_d$.
Given a sequence  $\lambda = (\lambda_1, \ldots , \lambda_n) \in \Z^n$ and $d \in \Z$, write $\lambda \vDash d$ if $\lambda$ is a composition of $d$. For $\lambda \vDash d$ denote the Young subgroup
 \[
S_{\lambda} := S_{\{1 , \ldots , \lambda_1\}} \times S_{\{\lambda_1 +1 , \ldots , \lambda_1 +\lambda_2\}}
\times \cdots \times S_{\{\lambda_1 + \cdots + \lambda_{n-1} + 1 , \ldots , d \}}
\]

We give a diagrammatic presentation of the full subcategory, $\Perm (S_d)$, of $\Rep S_d$ whose objects are the right $\C[S_d]$ modules $M^{\lambda} = \C [S_{\lambda} \backslash S_d]$, where $\lambda \vDash d$ (Theorem \ref{equiv}). 
Indeed every $M^{\lambda}$, $\lambda \vDash d$, is isomorphic in $\Rep S_d$ to a $\gl_n$-weight space of $\bigotimes^{d} \C^d$. By applying the CKM-principle and the diagrammatic calculus developed in \cite{mainpaper}, we derive a presentation of 
$\Perm (S_d)$. We outline these results here.

First define the category $\Perm$ with
\begin{itemize}
\item
Objects: The permutation modules $M^{\lambda}_d := \C [S_{\lambda} \backslash S_d]$ for all $d \in \N$ and $\lambda \vDash d$. \item
Morphisms:
\[
\Hom (M^{\lambda}_{d} , M^{\mu}_{d'}) = 
\begin{cases}
\Hom_{S_d} (M^{\lambda}_{d} , M^{\mu}_{d}) & \text{if } d=d' \\
0  & \text{otherwise}
\end{cases}
\]
\end{itemize}

\begin{rem}
$\Perm$ is the full subcategory of $\bigoplus_{d=0}^{\infty} \Rep S_d$ whose objects are the $M^{\lambda}_{d}$. We omit the subscript in $M^{\lambda}_{d}$ when the value of $d$ is clear from the context.
\end{rem}

It is more natural from the diagrams perspective to first give a diagrammatic presentation of $\Perm$,  and then consider $\Perm (S_d)$ as a full subcategory of $\Perm$. For this we define a monoidal product on $\Perm$.

The category $\bigoplus_{d=0}^{\infty} \Rep S_d$ has a monoidal product defined on direct summands by the bifunctor
\[
\cdot \circ \cdot = \Ind_{S_d \times S_{d'}}^{S_{d+d'}}  (\cdot \boxtimes \cdot) : \Rep S_d \times \Rep S_{d'} \rightarrow \Rep S_{d+d'}
\]
This restricts to a monoidal product on $\Perm$ since
\begin{align*}
M^{\lambda}_{d} \circ M^{\mu}_{d'} 
&=\left( 
 \C [S_{\lambda} \backslash S_d] \otimes  \C [S_{\mu} \backslash S_{d'}]
\right)
\otimes_{\C [S_d \times S_{d'}]} \C [S_{d+d'}]
= \C[S_{\lambda} \times S_{\mu} \backslash S_{d+d'}] = M^{(\lambda, \mu)}_{d+d'}
\end{align*}
where $(\lambda, \mu)$ is the concatenation of the sequences $\lambda$ and $\mu$.
We regard $\Perm$ as a monoidal category with the monoidal product $\cdot \circ \cdot $. It will be shown (Theorem \ref{equiv}) that the morphisms of $\Perm$
are generated (as a monoidal $\C$-linear category) by the morphisms:

For $k,l$ nonnegative integers, and $G=S_{k,l} \backslash S_{k+l}$:
\begin{align*}
\nabla_{k,l}:& M^{k,l} \rightarrow \C = M^{k+l} ~;~ \sum_{g \in G}
k_g g \rightarrow \sum_{g \in G} k_g &\text{for $k_g \in \C$} 
\displaybreak[1]\\
\Delta_{k,l}: & \C = M^{k+l} \rightarrow M^{k,l} ~;~ 1 \rightarrow \sum_{g \in G} g 
=\frac{1}{k!l!} \sum_{g \in S_{k+l}} S_{k,l} g
\end{align*}

We depict these morphisms diagrammatically 
\begin{equation*}
\fuse{k}{l}{k+l} \ \ \text{ and } \ \ \fork{k}{l}{k+l}.
\end{equation*}
with composition drawn by vertical juxtaposition, and the $\circ$-product drawn by horizontal juxtaposition. Theorem \ref{equiv} states that $\Perm$ is the monoidal $\C$-linear category $\circ$-generated by the $\nabla_{k,l}$, $\Delta_{k,l}$ modulo the relations (\ref{eq2:IH}), (\ref{eq2:HI}), (\ref{eq2:bigon1}), (\ref{eq2:commute}), and any planar isotopy that keeps 
the edges oriented upwards. 

A presentation of $\Perm(S_d)$ can be obtained from this presentation by restricting to diagrams in which the numbers along the bottom (and top) sum to $d$. In particular, $\Perm(S_d)$ is generated by the morphisms, for 
$\lambda = (\lambda_1, \ldots , \lambda_n) \vDash d$,
\[
M_i 1_{\lambda} := 
\begin{tikzpicture}[baseline=20, xscale=0.75]
\laddercoordinates{8}{1}
\node[below] at (l00) {$\lambda_{1}$};
\node[below] at (l20) {$\lambda_{i{-}1}$};
\node[below] at (l50) {$\lambda_{i{+}2}$};
\node[below] at (l70) {$\lambda_{n}$};
\node at ($(l10)+(0,1)$) {$\cdots$};
\ladderI{0}{0};
\ladderI{2}{0};
\ladderI{5}{0};
\ladderI{7}{0};
\node at ($(l60)+(0,1)$) {$\cdots$};
\draw[mid>] ($(l30)$) node[below] {$\lambda_{i}$} -- ($(l30)+(0.75,0.75)$);
\draw[mid>] ($(l40)$) node[below] {$\lambda_{i{+}1}$} -- ($(l30)+(0.75,0.75)$);
\draw[mid>] ($(l30)+(0.75,0.75)$) -- ($(l30)+(0.75,1.5)$) node[above] {$\lambda_{i}+\lambda_{i{+}1}$};
\end{tikzpicture} 
\]
and
\[
M'_i 1_{\lambda} := 
\begin{tikzpicture}[baseline=20, xscale=0.75]
\laddercoordinates{8}{1}
\node[below] at (l00) {$\lambda_{1}$};
\node[below] at (l20) {$\lambda_{i{-}1}$};
\node[below] at (l50) {$\lambda_{i{+}2}$};
\node[below] at (l70) {$\lambda_{n}$};
\node at ($(l10)+(0,1)$) {$\cdots$};
\ladderI{0}{0};
\ladderI{2}{0};
\ladderI{5}{0};
\ladderI{7}{0};
\node at ($(l60)+(0,1)$) {$\cdots$};
\draw[mid<] ($(l31)$) node[above] {$\lambda_{i}$} -- ($(l30)+(0.75,0.75)$);
\draw[mid<] ($(l41)$) node[above] {$\lambda_{i{+}1}$} -- ($(l30)+(0.75,0.75)$);
\draw[mid>] ($(l30)+(0.75,0)$) node[below] {$\lambda_{i}+\lambda_{i{+}1}$}-- ($(l30)+(0.75,0.75)$) ;
\end{tikzpicture} 
\]
These are defined explicitly, for $\lambda' = (\lambda_1, \ldots, \lambda_{i} + \lambda_{i+1}, \ldots, \lambda_n)$,
\begin{align*}
M_i 1_{\lambda} : M^{\lambda} \rightarrow M^{\lambda'} &~;~ S_{\lambda} g \mapsto S_{\lambda'} g
\displaybreak[1]
\\
M'_{i} 1_{\lambda} : M^{\lambda'} \rightarrow M^{\lambda} &~;~ S_{\lambda'} g \mapsto \frac{1}{\lambda_i ! \lambda_{i+1} !} 
\sum_{h \in S_{\{ \lambda_{i-1} +1, \ldots, \lambda_{i+1} \}}} S_{\lambda} h g 
\end{align*}

The category $\Perm$ is symmetric monoidal, with the braiding isomorphism $M^{k,l} \rightarrow M^{l,k}$ defined diagrammatically (Theorem \ref{Prmweb}):
\[
\tikz[baseline=40]{
\draw[mid>] (0,0) 
to [out=up, in=down] (2,3);
\draw[mid>] (2,0) 
to [out=up, in=down] (0,3);
\node[left] at (0,0) {$k$};
\node[right] at (2,0) {$l$};
\node[left] at (0,3) {$l$};
\node[right] at (2,3) {$k$};
}
:=
\sum_{a,b \geq 0 , a-b=k-l} (-1)^{k-a} 
\begin{tikzpicture}[baseline=40]
\laddercoordinates{1}{2}
\node[left] at (l00) {$k$};
\node[right] at (l10) {$l$};
\ladderEn{0}{0}{$k{-}a$}{$l{+}a$}{a}
\ladderFn{0}{1}{$l$}{$k$}{b}
\end{tikzpicture}
\]

\subsection{The Kronecker product of permutation modules} It is is known that the tensor product of two permutation modules of $S_d$ decomposes as a direct sum of permutation modules of $S_d$ (\cite{jks}, lemma 2.9.16). In Section \ref{kronecker} we recall this decomposition, and give an explicit description of the $\otimes$-product of morphisms between permutation modules, in terms of our generating morphisms.

\subsection{Acknowledgements} 
Part of this research was undertaken for an honours thesis at the University of Sydney, under the supervision of Dr. Oded Yacobi.

I thank my mentor Dr. Oded Yacobi for suggesting this problem to me, and for the continual guidance and feedback in the research and writing of this paper.

I have also benefitted from Cautis, Kamnitzer, Morrison uploading the source file for \cite{mainpaper} onto the arXiv. Indeed many of the diagrams here use their macros.

\section{Cautis-Kamnitzer-Morrison Principle}\label{ckm}

In this section we prove the CKM principle.
Recall we use the term $\g$-module to just refer to $\g$-modules with an integral weight space decomposition. Let $\Lambda$ denote the integral weights of $\g$.
For any $\g$-module $V$, write $\Pi(V)$ for the set of weights of $V$, and $\Pi^{+} (V)$ for the set of $\g$-dominant weights of $V$. 
For any $\g$-dominant weight $\lambda \in \Lambda$, write $V^{\lambda}$ for the irreducible $\g$-module of highest weight $\lambda$. 

Say that a $\g$-module $P$ is \textbf{saturated (for $\g$)} if 
$\{ V^{\lambda} \}_{\lambda \in \Pi^{+} (P)}$ is the set of irreducible $\g$-submodules of $P$ up to isomorphism.

\begin{lem}\label{21.3}
A finite dimensional $\g$-module $P$ is saturated if and only if for any irreducible $\g$-module $V$, $\Pi(V) \subseteq \Pi(P)$ implies $V$ is a submodule of $P$. 
\end{lem}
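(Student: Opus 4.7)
The plan is to prove the equivalence directly.

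For $(\Rightarrow)$, assume $P$ is saturated and let $V$ be an irreducible $\g$-module with $\Pi(V)\subseteq\Pi(P)$. Writing $\mu$ for the highest weight of $V$, the weight $\mu$ is $\g$-dominant and lies in $\Pi(V)\subseteq\Pi(P)$, hence $\mu\in\Pi^{+}(P)$. Saturation then provides a copy of $V\cong V^{\mu}$ as a submodule of $P$, which is exactly what the implication requires.

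For $(\Leftarrow)$, assume the implication in the statement and fix $\lambda\in\Pi^{+}(P)$; I need to show that $V^{\lambda}$ embeds in $P$. By the hypothesis it suffices to check $\Pi(V^{\lambda})\subseteq\Pi(P)$. Because $P$ is finite dimensional with an integral weight space decomposition under the reductive Lie algebra $\g$, it is completely reducible as a $\g$-module: $P\cong\bigoplus_{i}V^{\mu_i}$ with each $\mu_i$ $\g$-dominant. The dominant weight $\lambda$ lies in $\Pi(P)$, so $\lambda$ is a weight of some $V^{\mu_i}$, and the classical characterization of weights of a highest weight module forces $\mu_i-\lambda\in\Z_{\geq 0}\Delta$, i.e.\ $\lambda\leq\mu_i$ in the root order. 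For any $\nu\in\Pi(V^{\lambda})$ the same characterization gives $w\nu\leq\lambda$ for every element $w$ of the Weyl group of $\mathfrak{s}$; combined with $\lambda\leq\mu_i$ this yields $w\nu\leq\mu_i$ for every $w$, so $\nu\in\Pi(V^{\mu_i})\subseteq\Pi(P)$, as required.

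The only non-routine ingredient is the standard description of the weights of $V^{\mu}$ for dominant $\mu$, namely $\Pi(V^{\mu})=\{\nu : w\nu\leq\mu\text{ for all }w\text{ in the Weyl group}\}$; this is classical (see e.g.\ Humphreys) and I would simply cite it. The passage from a semisimple $\mathfrak{s}$ to the reductive $\g=\mathfrak{s}\oplus\mathfrak{a}$ requires no extra work, since every weight of $V^{\mu}$ has the same restriction to $\mathfrak{a}$ as $\mu$ and the partial order involved only sees the $\mathfrak{s}$-components. I do not anticipate a genuine obstacle: the lemma is essentially a bookkeeping exercise that reconciles the two natural notions of ``containment'' (at the level of highest weights in $\Pi^{+}(P)$, versus at the level of entire weight sets) using complete reducibility and the shape of the weight set of an irreducible.
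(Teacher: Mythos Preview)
Your proof is correct and follows essentially the same approach as the paper. The paper compresses the argument into the single claim ``$\lambda\in\Pi^{+}(P)$ if and only if $\Pi(V^{\lambda})\subseteq\Pi(P)$'' (citing the same classical description of $\Pi(V^{\lambda})$ you use), whereas you spell out the nontrivial direction of that equivalence via complete reducibility; this is the only difference, and it is expository rather than mathematical.
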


\begin{proof}
Define the usual partial order on $\Lambda$: $\lambda \geq \mu$ if $\lambda - \mu$ is a sum of positive roots. 
Let $\mathcal{W}$ be the Weyl group of $\g$. If $\lambda \in \Lambda$ is $\g$-dominant then
\[
\Pi (V^{\lambda}) = \{
\mu \in \Lambda | \text{if $\nu \in \mathcal{W} \mu$ then $\nu \leq \lambda$}
\}
\]
Hence $\lambda \in \Pi^{+}(P)$ if and only if $\Pi(V^{\lambda}) \subseteq \Pi(P)$. The result follows.
\end{proof}

Define the algebra version of Lusztig's idempotented form,
\[
\dot{U} \g := \bigoplus_{\lambda , \mu \in \Lambda} 1_{\mu} \dot{\mathcal{U}} \g  1_{\lambda}
\]
with multiplication defined by composition in $\dU \g$. Define the two-sided ideal, $I_{P}$, of $\dot{U} \g$,
\[
I_P := \langle 1_{\lambda} | \lambda \notin \Pi(P) \rangle
\]
Recall that a $\g$-module $P$ is equivalent to a functor $\mathcal{F}_P: \dU \g \rightarrow Vec_{\C}$, and this functor factors through the category $\dU^{P} \g$ defined by setting to zero all morphisms that factor through an object $\lambda \notin \Pi(P)$. Such a functor $\mathcal{F}_P: \dU \g \rightarrow Vec_{\C}$ is equivalent to a representation map $\dot{F}_P: \dot{U} \g \rightarrow \End P$. Clearly $\dot{F}_P$ factors through the algebra
\begin{align*}
\dot{U}^P \g &:= \bigoplus_{\lambda , \mu \in \Pi(P)} 1_{\mu} \dot{\mathcal{U}}^P \g 1_{\lambda} \displaybreak[1]\\
&= \dot{U} \g / I_P
\end{align*}

It is shown in (\cite{doty}, Theorem 4.2) that the algebra $\dot{U}^P \g$ is isomorphic to the generalized Schur algebra $\mathbf{S}(\Pi (P)) := \dot{U} \g / \Ker \dot{F}_P$. The following lemma is shown in 
\cite{doty} and we include our own proof for completeness.

\begin{lem}\label{Wed}
If $P$ is a finite dimensional saturated $\g$-module then 
\begin{enumerate}
\item
$\{ V^{\lambda} \}_{\lambda \in \Pi^{+} (P)}$ is the set of irreducible $\dot{U}^P \g $-modules up to isomorphism.
\item
$\dot{U}^P \g $ is finite dimensional and semisimple.
\end{enumerate}
\end{lem}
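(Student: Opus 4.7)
The plan is to establish part (1) first using highest weight theory, then leverage it for part (2) by exploiting the integrability forced by the finiteness of $\Pi(P)$.

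For part (1), any simple $\dot{U}^P \g$-module $M$ is, as a $\g$-module, irreducible with all weights in the finite set $\Pi(P)$. I would pick a nonzero weight vector of weight $\mu$ maximal in the weight partial order on $\Pi(M)$; this is automatically a highest weight vector (the only candidates for $E_i v$ lie in $M_{\mu + \alpha_i} = 0$), so simplicity forces $M \iso L(\mu)$. Since $L(\mu)$ has finitely many weights (all in $\Pi(P)$) and finite-dimensional weight spaces (as a quotient of a Verma module), it is finite-dimensional, which forces $\mu$ to be $\g$-dominant and $M = V^\mu$. Lemma \ref{21.3} and saturation of $P$ then give $V^\mu \subseteq P$, i.e.\ $\mu \in \Pi^+(P)$. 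Conversely each $V^\lambda$ with $\lambda \in \Pi^+(P)$ is a submodule of $P$ by saturation, so has weights in $\Pi(P)$ and is a simple $\dot{U}^P\g$-module.

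For part (2), the crucial observation is that every $\dot{U}^P \g$-module $M$ is integrable: for a weight vector $v \in M_\lambda$, the iterate $E_i^k v$ lies in $M_{\lambda + k\alpha_i}$ and must vanish once $\lambda + k\alpha_i$ leaves the finite set $\Pi(P)$, and similarly for $F_i$; the Cartan acts diagonally by the weight decomposition. I would then invoke the classical fact that any integrable module over reductive $\g$ is completely reducible, so by part (1) its irreducible summands are precisely the $V^\lambda$ with $\lambda \in \Pi^+(P)$. In particular $\dot{U}^P\g$ is a semisimple ring. Finite-dimensionality then follows by embedding $\dot{U}^P\g$ into $\End_{\C}(V)$ for $V := \bigoplus_{\lambda \in \Pi^+(P)} V^\lambda$: semisimplicity yields injectivity (an element annihilating every simple annihilates the identity element of $\dot{U}^P\g$ and hence vanishes), while Jacobson density together with Schur's lemma identifies the image as $\bigoplus_{\lambda \in \Pi^+(P)} \End(V^\lambda)$, which is finite-dimensional semisimple.

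The step I expect to require the most care is the integrability-to-complete-reducibility passage for modules that may a priori be infinite-dimensional. Here one must verify that every element lies in some finite-dimensional $\g$-submodule (a consequence of PBW together with local nilpotence of the Chevalley generators) so that Weyl's complete reducibility theorem applies piecewise and patches to yield a global semisimple decomposition; the abelian part of $\g$ poses no obstruction since it acts by scalars on each weight space.
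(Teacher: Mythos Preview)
Your argument is correct. For part (1) you give essentially the paper's proof, only spelled out in more detail: the paper simply asserts that irreducible $\dot U^P\g$-modules are exactly the irreducible $\g$-modules $V$ with $\Pi(V)\subseteq\Pi(P)$ and then invokes Lemma~\ref{21.3}, whereas you supply the highest-weight-vector argument showing such a $V$ is finite-dimensional and hence of the form $V^\mu$. (Incidentally, once you know $\mu$ is dominant and $\mu\in\Pi(M)\subseteq\Pi(P)$, you have $\mu\in\Pi^+(P)$ directly; the appeal to Lemma~\ref{21.3} is not needed there.)

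For part (2) the paper takes the opposite order from you. It first argues finite-dimensionality directly: by PBW a spanning set for $1_\mu\dot U^P\g\,1_\lambda$ is given by ordered monomials in the $E_i,F_i$, and since any such monomial passing through a weight outside the finite set $\Pi(P)$ vanishes, only finitely many survive. Semisimplicity then follows in one line from Weyl's theorem applied to the now finite-dimensional regular module. Your route instead establishes complete reducibility of \emph{all} $\dot U^P\g$-modules via integrability, deduces semisimplicity, and only then recovers finite-dimensionality through the density embedding $\dot U^P\g\hookrightarrow\bigoplus_{\lambda\in\Pi^+(P)}\End(V^\lambda)$. The paper's order is shorter and sidesteps the integrability-to-semisimplicity lemma you flag as delicate; your order has the payoff of producing the Wedderburn decomposition explicitly, which the paper later needs anyway (in the proofs of Lemma~\ref{Sat} and Theorem~\ref{CKM}).
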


\begin{proof}
The (irreducible) $ \dot{U}^P \g$ modules are the (irreducible)
$\g$-modules $V$ such that $\Pi(V) \subseteq \Pi(P)$. By Lemma \ref{21.3} the irreducible $ \dot{U}^P \g$ modules are precisely the irreducible submodules $\{ V^{\lambda} \}_{\lambda \in \Pi^{+} (P)}$. By the PBW theorem, and since $P$ has finitely many weights, $ \dot{U}^P \g$ is finite dimensional. Since  $ \g $-modules are completely reducible, $ \dot{U}^P \g$ is semisimple. 
\end{proof}

The kernel of the algebra representation map $\dot{F}_P : \dot{U} \g \rightarrow \End P$ is the two-sided ideal in $\dot{U} \g$,
\[
\Ker \dot{F}_P := \langle f \in 1_{\mu} \dot{\mathcal{U}} \g  1_{\lambda} |  
\mathcal{F}_P (f) = 0 \in \Hom_{\C} (P_{\lambda}, P_{\mu}), \lambda, \mu \in \Lambda
 \rangle
\]

The following lemma is a direct consequence of (\cite{doty}, Theorem 4.2). We include a more elementary proof.
\begin{lem}[Doty Criterion]\label{Sat}
A finite dimensional $\g$-module $P$ is saturated if and only if
$
\Ker \dot{F}_P = I_P
$. 
\end{lem}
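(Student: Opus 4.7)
The containment $I_P \subseteq \Ker \dot{F}_P$ is automatic: if $\lambda \notin \Pi(P)$ then $P_\lambda = 0$, so $\mathcal{F}_P(1_\lambda) = 0$, and the two-sided ideal these identities generate lies in $\Ker \dot{F}_P$. Thus $\dot{F}_P$ factors as $\dot{U}\g \twoheadrightarrow \dot{U}^P\g \xrightarrow{\overline{\dot{F}_P}} \End P$, and the lemma amounts to the assertion that $\overline{\dot{F}_P}$ is injective, equivalently that $P$ is a faithful $\dot{U}^P\g$-module, if and only if $P$ is saturated.

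The forward direction is immediate from Lemma \ref{Wed}: assuming $P$ saturated, that lemma gives that $\dot{U}^P\g$ is finite-dimensional semisimple with simples $\{V^\lambda\}_{\lambda \in \Pi^+(P)}$, each of which is a submodule of $P$ by saturation. A finite-dimensional semisimple algebra acts faithfully on any module containing every simple as a summand, so $\overline{\dot{F}_P}$ is injective, and $\Ker \dot{F}_P = I_P$.

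For the converse, assume $\Ker \dot{F}_P = I_P$, so $\overline{\dot{F}_P}$ is injective and $\dot{U}^P\g$ is finite-dimensional (it sits inside the finite-dimensional algebra $\End P$). I would proceed in three steps. First, show $\dot{U}^P\g$ is semisimple: any $\dot{U}^P\g$-module is a $\g$-module with weights in $\Pi(P)$, hence completely reducible by reductivity of $\g$; in particular $P$ is a semisimple $\dot{U}^P\g$-module, so the Jacobson radical $J(\dot{U}^P\g)$ annihilates $P$, and by faithfulness $J(\dot{U}^P\g) = 0$. Next, identify the simple $\dot{U}^P\g$-modules: a simple $\dot{U}^P\g$-module is a simple $\g$-module $V$ with $\Pi(V) \subseteq \Pi(P)$, and the Weyl-orbit description of $\Pi(V^\lambda)$ used in the proof of Lemma \ref{21.3} shows these are exactly the $V^\lambda$ with $\lambda \in \Pi^+(P)$. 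Finally, since $P$ is a faithful semisimple module over the semisimple algebra $\dot{U}^P\g$, every simple appears as a summand, so every $V^\lambda$ with $\lambda \in \Pi^+(P)$ is a submodule of $P$, proving $P$ is saturated.

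The main obstacle is the converse, where Lemma \ref{Wed} is unavailable (it presupposes saturation, so invoking it would be circular). The delicate step is bootstrapping semisimplicity of $\dot{U}^P\g$: it has to come from the combination of the injection $\dot{U}^P\g \hookrightarrow \End P$ and the complete reducibility of arbitrary $\g$-modules, via a Jacobson-radical argument applied to the faithful module $P$. Once semisimplicity is in hand, identifying simples and concluding saturation are routine.
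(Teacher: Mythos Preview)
Your proof is correct. The forward direction matches the paper's exactly. For the converse, the paper argues by contrapositive: assuming $P$ is not saturated, it invokes Wedderburn's decomposition of $\dot{U}^P\g$ to exhibit a simple $W$ with $\Pi(W) \subseteq \Pi(P)$ but $W \not\subseteq P$, so that the block $\End W$ lies in $\Ker \dot{F}_P \setminus I_P$. You instead argue directly, deducing from faithfulness that every simple $\dot{U}^P\g$-module occurs in $P$. Both arguments rest on the same two facts---semisimplicity of $\dot{U}^P\g$ and the identification of its simples as the $V^\lambda$ for $\lambda \in \Pi^+(P)$---so they are really contrapositives of one another. You are more careful than the paper on one point: the paper applies Wedderburn to $\dot{U}^P\g$ in the non-saturated case, tacitly using semisimplicity, which in Lemma~\ref{Wed} is only stated under the saturation hypothesis (though the proof of part~(2) does not actually use saturation). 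Your Jacobson-radical argument makes this step explicit and avoids the apparent circularity.
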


\begin{proof}
Suppose $P$ is saturated.
Clearly $I_P \subseteq \Ker \dot{F}_P$. To prove the converse it suffices to show that $ \dot{U} \g / I_P$ acts on $P$ faithfully. By Lemma \ref{Wed},
\begin{align*}
\{ f \in \dot{U}^P \g | fP = 0 \} &= \Rad(\dot{U}^P \g) 
&\text{by Lemma \ref{Wed} (1)} 
\displaybreak[1]\\
&= 0 &\text{by Lemma \ref{Wed} (2)} 
\end{align*}
as required.

If P is not saturated then Wedderburn's theorem gives a decomposition
\[
\dot{U}^P \g = \left( \bigoplus_{\lambda \in \Pi^{+} (P)} \End V^{\lambda} \right) \oplus 
\left( \bigoplus_{i \in \chi} \End W^i \right)
\]
where the $W^i$ are nonzero irreducible $\g$-modules not contained in $P$ (but whose weights are weights of $P$). Then each $\End W^i \subset \Ker \dot{F}_P$ but $\End W^i  \not\subset I_P$.
\end{proof}

\begin{rem}
As an example of Lemma \ref{Sat} take $\g = \sl_2 (\C)$ and $P= V(2)$, the irreducible $\sl_2$-module of highest weight 2. Then
$z := E F1_0 +F  E1_0 - 41_0 \in 1_0 \dU  \sl_2 1_0$ is in $\Ker \dot{F}_P$ and not in $I_P$. If we instead take $P$ to be the saturated module $V(2) \oplus V(0)$ then $z \notin \Ker \dot{F}_P$ as expected. 
\end{rem}

\begin{thm}[CKM Principle]\label{CKM}
Let $A$ be an algebra over $\C$, and $P$ a finite dimensional $(\g, A)$-bimodule that has a multiplicity free decomposition
\[
P = \bigoplus_{\lambda \in \Pi^{+}(P)} V^{\lambda} \otimes L^{\lambda}
\]
where $\{L^{\lambda} \}_{\lambda \in \Pi^{+} (P)}$ is some family of irreducible $A$-modules. Then $\dot{U}^P \g \cong \End_A P$ and $\dU^P \g$ is isomorphic to the full subcategory, $\Rep^{P} A$, of $\Rep A$ whose objects are the weight spaces of $P$.
\end{thm}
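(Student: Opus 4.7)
The plan is to deduce the CKM Principle from the Doty Criterion (Lemma \ref{Sat}) together with a Schur's lemma computation, and then translate the resulting algebra isomorphism into an isomorphism of categories via the weight grading.

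First, observe that the hypothesised multiplicity-free decomposition
$P = \bigoplus_{\lambda \in \Pi^{+}(P)} V^{\lambda} \otimes L^{\lambda}$
exhibits every $V^{\lambda}$ with $\lambda \in \Pi^{+}(P)$ as a $\g$-submodule of $P$, so $P$ is saturated. Lemma \ref{Sat} then gives $\Ker \dot{F}_P = I_P$, so the representation map descends to an injection $\dot{U}^P \g = \dot{U}\g / I_P \hookrightarrow \End P$. Since the $\g$- and $A$-actions on $P$ commute, the image of $\dot{F}_P$ lies in the subalgebra $\End_A P$, giving an injection $\dot{U}^P \g \hookrightarrow \End_A P$.

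Next I would check that this injection is onto by a dimension count. By the bimodule decomposition and Schur's lemma (each $L^{\lambda}$ is an irreducible $A$-module, so $\End_A L^{\lambda} = \C$),
\[
\End_A P = \bigoplus_{\lambda \in \Pi^{+}(P)} \End(V^{\lambda}) \otimes \End_A(L^{\lambda}) = \bigoplus_{\lambda \in \Pi^{+}(P)} \End(V^{\lambda}).
\]
On the other side, Lemma \ref{Wed} tells us that $\dot{U}^P \g$ is finite dimensional and semisimple with complete list of irreducibles $\{V^{\lambda}\}_{\lambda \in \Pi^{+}(P)}$, so Wedderburn gives
\[
\dot{U}^P \g \cong \bigoplus_{\lambda \in \Pi^{+}(P)} \End(V^{\lambda}).
\]
The two algebras have the same (finite) dimension, so the injection of the previous paragraph is an isomorphism $\dot{U}^P \g \cong \End_A P$.

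Finally, I would unpack both sides with respect to the $\Lambda \times \Lambda$-grading by weights. The representation map $\dot{F}_P$ sends $1_{\mu} \dot{\mathcal{U}} \g 1_{\lambda}$ into $\Hom_{\C}(P_{\lambda}, P_{\mu})$, and since its image is $\End_A P$, the graded piece lands isomorphically onto $\Hom_A(P_{\lambda}, P_{\mu})$. This says exactly that the functor $\dU^P \g \to \Rep^{P} A$ induced by $\mathcal{F}_P$, which sends $\lambda \mapsto P_{\lambda}$, is fully faithful; it is bijective on objects by the definition of $\Rep^P A$, hence an isomorphism of categories. The only real subtlety to handle carefully is confirming that the grading of $\End_A P$ by source and target weight space matches the idempotent decomposition on the $\dot{U}^P \g$ side, so that the algebra isomorphism genuinely upgrades to a categorical isomorphism rather than a mere equivalence.
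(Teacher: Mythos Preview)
Your proof is correct and follows essentially the same route as the paper's own proof: use the Doty Criterion (Lemma~\ref{Sat}) to obtain injectivity of $\dot{U}^P \g \to \End_A P$, compute $\End_A P \cong \bigoplus_{\lambda \in \Pi^{+}(P)} \End V^{\lambda}$ via Schur's lemma and multiplicity freeness, invoke Lemma~\ref{Wed} and Wedderburn to identify $\dot{U}^P \g$ with the same direct sum, and then read off the categorical isomorphism from the weight grading. The paper is slightly terser (it leaves the observation that the hypothesis forces $P$ saturated implicit, and presents the chain of algebra isomorphisms in one display rather than separating injection and dimension count), but the logical content is the same.
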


\begin{proof} By Lemma \ref{Sat}, the $\C$-linear functor $\mathcal{F}_P: \dU \g \rightarrow Vec_{\C}$ factors through a faithful functor $\dU^P \g \rightarrow \Rep A$. This functor is fully-faithful since we have the following isomorphism of algebras,
\begin{align*}
\bigoplus_{\lambda , \mu \in \Pi(P)} \Hom_A (P_{\lambda} , P_{\mu})
&\cong \End_A P &\displaybreak[1]\\
 &= \End_A (\bigoplus_{\lambda \in \Pi^{+} (P)} V^{\lambda} \otimes L^{\lambda}) &\displaybreak[1]\\
&\cong \bigoplus_{\lambda \in \Pi^{+} (P)} \End_{A} (V^{\lambda} \otimes L^{\lambda}) &\text{ by Schur's Lemma  and multiplicity freeness} \displaybreak[1]\\
&\cong \bigoplus_{\lambda \in \Pi^{+} (P)} \End V^{\lambda} \otimes \End_A L^{\lambda} & \displaybreak[1]\\
&\cong \bigoplus_{\lambda \in \Pi^{+} (P)} \End V^{\lambda} &\text{ by Schur's Lemma} \displaybreak[1]\\
&\cong \dot{U}^P \g  &\text{ by Lemma \ref{Wed} and Wedderburn's Theorem}
\end{align*}
The result follows.
\end{proof}

Given a linear category $\mathcal{C}$, define the \textbf{free additive category generated by $\mathcal{C}$}, $\add$-$\mathcal{C}$, to be the category whose objects are biproducts of objects in $\mathcal{C}$, as well as a zero object. Morphisms 
$f: A_1 \oplus \cdots \oplus A_n \rightarrow B_1 \oplus \cdots \oplus B_m$ in $\add$-$\mathcal{C}$ are written as $m \times n$-matrices
\[
f = \begin{pmatrix}
    f_{11} & f_{12} & f_{13} & \dots  & f_{1n} \\
    f_{21} & f_{22} & f_{23} & \dots  &f_{2n} \\
    \vdots & \vdots & \vdots & \ddots & \vdots \\
    f_{m1} & f_{m2} & f_{m3} & \dots  & f_{mn}
\end{pmatrix}
\]
where $f_{ij}: A_j \rightarrow B_i$ is a morphism in $\mathcal{C}$. Addition and composition of morphisms in
$\add$-$\mathcal{C}$ is given by usual matrix addition and multiplication. 

The \textbf{Karoubi envelope of $\mathcal{C}$} has as objects pairs $(A, e)$ where $A$ is an object in $\mathcal{C}$ and $e:A \rightarrow A$ is an idempotent morphism in $\mathcal{C}$. Morphisms 
$(A,e) \rightarrow (B,f)$ are morphisms $\varphi:A \rightarrow B$ in $\mathcal{C}$ such that $\varphi=f \varphi e$.

\begin{cor}\label{karckm}
Assume the same set up as Theorem \ref{CKM}. Suppose $A$ is finite dimensional and semisimple. If $A$ acts on $P$ faithfully then the Karoubi envelope of $\add$-$\dU^{P} \g$ is equivalent to $\Rep A$.
\end{cor}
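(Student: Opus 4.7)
The plan is to leverage Theorem \ref{CKM}, which identifies $\dU^{P} \g$ with the full subcategory $\Rep^{P} A \subset \Rep A$ whose objects are the weight spaces $P_{\mu}$. Under this isomorphism, the Karoubi envelope of $\add$-$\dU^{P} \g$ becomes the Karoubi envelope of $\add$-$\Rep^{P} A$. Since $A$ is finite dimensional and semisimple, $\Rep A$ is itself an additive, idempotent-complete $\C$-linear category, so the inclusion $\Rep^{P} A \hookrightarrow \Rep A$ extends uniquely to a fully faithful functor from the Karoubi envelope of $\add$-$\Rep^{P} A$ into $\Rep A$ (one sends a formal biproduct of weight spaces to the actual biproduct in $\Rep A$, and splits any idempotent there). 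So the whole question reduces to essential surjectivity: every finite dimensional $A$-module must be isomorphic to a direct summand of a finite direct sum of weight spaces $P_{\mu}$.

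For essential surjectivity I would invoke the faithfulness hypothesis. By Wedderburn's theorem $A \cong \bigoplus_{i} \End N^{i}$, where $\{N^{i}\}$ is a complete list of irreducible $A$-modules up to isomorphism. From the multiplicity free decomposition $P = \bigoplus_{\lambda \in \Pi^{+}(P)} V^{\lambda} \otimes L^{\lambda}$, the $A$-module structure on $P$ is $\bigoplus_{\lambda} (L^{\lambda})^{\oplus \dim V^{\lambda}}$. Faithfulness of the $A$-action forces every Wedderburn block $\End N^{i}$ to act nontrivially on $P$, so every irreducible $N^{i}$ appears among the $L^{\lambda}$.

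Now $P = \bigoplus_{\mu \in \Pi(P)} P_{\mu}$ as an $A$-module, so every irreducible $A$-module is already a direct summand of an object of $\add$-$\Rep^{P} A$. By semisimplicity of $A$, an arbitrary object of $\Rep A$ is a finite direct sum of irreducibles, hence a direct summand of $P^{\oplus k}$ for some $k \geq 0$, and $P^{\oplus k}$ is a biproduct of weight spaces of $P$. This places every object of $\Rep A$ (up to isomorphism) in the image of the functor from the Karoubi envelope of $\add$-$\Rep^{P} A$, finishing the equivalence.

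There is no serious obstacle here: the corollary is an assembly of Theorem \ref{CKM} with two standard facts, namely that a faithful module over a finite dimensional semisimple algebra contains every irreducible as a summand, and that the Karoubi envelope of a full subcategory $\mathcal{D}$ of an idempotent-complete additive category $\mathcal{C}$ is equivalent to the smallest such subcategory of $\mathcal{C}$ containing $\mathcal{D}$. The only thing to be mildly careful about is that faithfulness, and not merely the CKM decomposition, is what is needed to hit every Wedderburn block of $A$ once one passes to the Karoubi completion.
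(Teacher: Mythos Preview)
Your proposal is correct and follows essentially the same route as the paper: both reduce via Theorem \ref{CKM} to showing that every irreducible $A$-module occurs among the $L^{\lambda}$, and both deduce this from faithfulness together with Wedderburn's theorem. The paper phrases the last step as the chain of inclusions $A \subseteq \End_{\g}(P) = \bigoplus_{\lambda} \End L^{\lambda} \subseteq A$, whereas you argue directly that each Wedderburn block must act nontrivially on $P$; these are the same argument in slightly different words.
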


\begin{proof}
By the CKM Principle $\add$-$\dU^{P} \g$ is isomorphic to $\add$-$\Rep^{P} A$, which is equivalent to the full subcategory of $\Rep A$ whose objects are direct sums of $\g$-weight spaces of $P$. Now, $P$ has the following decompositions into $A$-modules,
\[
P=\bigoplus_{\lambda \in \Pi(P)} P_{\lambda} = \bigoplus_{\lambda \in \Pi^{+}(P)} (\dim V^{\lambda} ) L^{\lambda} .
\]
Hence each of the $L^{\lambda}$ appear as a direct summand of one of the $P_{\lambda}$. Hence it is enough to show that $\{L^{\lambda} \}_{\lambda \in \Pi^{+} (P)}$ is the set of all irreducible $A$-modules up to isomorphism.
Note that we can identify,
\begin{align*}
A &\subseteq \End_{\g} (P) &\text{since $A$ acts faithfully on $P$} 
\displaybreak[1] \\
&=\bigoplus_{\lambda \in \Pi^{+} (P)} \End L^{\lambda} &\text{as in the proof of Theorem \ref{CKM}}
\displaybreak[1] \\
&\subseteq A &\text{by Wedderburn's Theorem}
\end{align*}
It follows that $\{L^{\lambda} \}_{\lambda \in \Pi^{+} (P)}$ is the set of all irreducible $A$-modules up to isomorphism.
The result follows.
\end{proof}

\section{A Diagrammatic Presentation of $\Perm$}\label{Perm}

As a first application of the CKM Principle, we use Schur-Weyl duality to derive a diagrammatic presentation of the full subcategory, $\Perm$, of $\bigoplus_{d=0}^{\infty} \Rep S_d$ whose objects are the permutation modules, $M^{\lambda}$, for $\lambda \vDash d$, $d \in \N$ (Theorem \ref{equiv}). 

\subsection{The category of $\Perm$-spiders}

Define a \textbf{$\Perm$-spider} to be a planar diagram made up of $\Z_{>0}$-labelled strands whose endpoints either intersect the bottom or top of the diagram, or intersect a vertex of the form
\[
\fuse{k}{l}{k+l} \ \ \text{ or } \ \ \fork{k}{l}{k+l}.
\]

We identify $\Perm$-spiders up to any planar isotopy that preserves the upwards orientation of the edges. Given two sequences $\lambda$, $\mu$ of integers we say that a $\Perm$-spider \textbf{connects $\lambda$ to $\mu$} if the sequence of edges along the bottom (respectively top) of the diagram is $\lambda$ (respectively $\mu$).

We draw $\Perm$-spiders with strands labelled by any integer. Strands with a 0 label are interpreted by removing the strand 
from the diagram. Strands with a non-positive label are zero morphisms.

Define the $\C$-linear monoidal category $\Sp (\Perm)$:
\begin{itemize}
\item
Objects: Finite sequences of nonnegative integers. The monoidal product on objects is given by concatenation.
\item
Morphisms: Morphisms from $\lambda$ to $\mu$ are $\C$-linear combinations of $\Perm$-spiders connecting $\lambda$ to $\mu$. Composition (respectively the monoidal product) is given by vertical (respectively horizontal) juxtaposition of diagrams. These satisfy the following relations 
\begin{align}
\begin{tikzpicture}[baseline]
\foreach \x/\y in {0/0,1/0,2/0,0/1,1/1,0/2} {
	\coordinate(z\x\y) at (\x+\y/2,\y/1.5);
}
\coordinate (z03) at (1,2);
\draw[mid>] (z00) node[below] {$k$} --  (z01);
\draw[mid>] (z01) -- node[left] {$k+l$} (z02);
\draw[mid>] (z10) node[below] {$l$} -- (z01);
\draw[mid>] (z20) node[below] {$m$} -- (z02);
\draw[mid>](z02) -- node[left] {$k+l+m$} (z03);
\end{tikzpicture}
& =
\begin{tikzpicture}[baseline]
\foreach \x/\y in {0/0,1/0,2/0,0/1,1/1,0/2} {
	\coordinate(z\x\y) at (\x+\y/2,\y/1.5);
}
\coordinate (z03) at (1,2);
\draw[mid>] (z00) node[below] {$k$} --  (z02);
\draw[mid>] (z10) node[below] {$l$} -- (z11);
\draw[mid>] (z20) node[below] {$m$} -- (z11);
\draw[mid>] (z11) -- node[right] {$l+m$} (z02);
\draw[mid>](z02) -- node[left] {$k+l+m$} (z03);
\end{tikzpicture}
\label{eq2:IH}
\displaybreak[1]
\\
\label{eq2:HI}
\begin{tikzpicture}[baseline]
\foreach \x/\y in {0/0,1/0,2/0,0/1,1/1,0/2} {
	\coordinate(z\x\y) at (-\x-\y/2+1,-\y/1.5+1);
}
\coordinate (z03) at (0,-1);
\draw[mid<] (z00) node[above] {$m$} --  (z01);
\draw[mid<] (z01) -- node[right] {$l+m$} (z02);
\draw[mid<] (z10) node[above] {$l$} -- (z01);
\draw[mid<] (z20) node[above] {$k$} -- (z02);
\draw[mid<](z02) -- node[left] {$k+l+m$} (z03);
\end{tikzpicture}
& =
\begin{tikzpicture}[baseline]
\foreach \x/\y in {0/0,1/0,2/0,0/1,1/1,0/2} {
	\coordinate(z\x\y) at (-\x-\y/2+1,-\y/1.5+1);
}
\coordinate (z03) at (0,-1);
\draw[mid<] (z00) node[above] {$m$} --  (z02);
\draw[mid<] (z10) node[above] {$l$} -- (z11);
\draw[mid<] (z20) node[above] {$k$} -- (z11);
\draw[mid<] (z11) -- node[left] {$k+l$} (z02);
\draw[mid<](z02) -- node[right] {$k+l+m$} (z03);
\end{tikzpicture}
\displaybreak[1] 
\\
\begin{tikzpicture}[baseline=20]
\foreach \n in {0,...,3} {
	\coordinate (z\n) at (0.4*\n, 0.8*\n);
}
\draw[mid>] (z0) -- node[right] {$k+l$} (z1);
\draw[mid>] (z2) -- node[right] {$k+l$} (z3);
\draw[mid>] (z1) to[out=150,in=-190] node[left] {$k$} (z2);
\draw[mid>] (z1) to[out=-30,in=0] node[right] {$l$} (z2);
\end{tikzpicture}
& = \binom{k+l}{l}
\tikz[baseline=20]{\draw[mid>] (0,0) -- node[right] {$k+l$} (1,2);}
\label{eq2:bigon1}
\displaybreak[1] 
\\
\begin{tikzpicture}[baseline=40]
\laddercoordinates{1}{2}
\node[left] at (l00) {$k$};
\node[right] at (l10) {$l$};
\ladderEn{0}{0}{$k{-}s$}{$l{+}s$}{$s$}
\ladderFn{0}{1}{$k{-}s{+}r$}{$l{+}s{-}r$}{$r$}
\end{tikzpicture}
&= \sum_t \binom{k-l+r-s}{t}
\begin{tikzpicture}[baseline=40]
\laddercoordinates{1}{2}
\node[left] at (l00) {$k$};
\node[right] at (l10) {$l$};
\ladderFn{0}{0}{$k{+}r{-}t$}{$l{-}r{+}t$}{$r{-}t$}
\ladderEn{0}{1}{$k{-}s{+}r$}{$l{+}s{-}r$}{$s{-}t$}
\end{tikzpicture}
\label{eq2:commute}
\end{align}
\end{itemize}
where $k,l,r,s \in \Z$.

As an example, let us show that the following equation holds in $\Sp(\Perm)$.
\begin{equation}
\label{band}
\tikz[baseline=40]{
\laddercoordinates{1}{2}
\ladderEn{0}{0}{$k-s$}{$l+s$}{$s$}
\ladderEn{0}{1}{$k-s-r$}{$l+s+r$}{$r$}
\node[left] at (l00) {$k$};
\node[right] at (l10) {$l$};
}
=
\binom{r+s}{r}
\tikz[baseline=20]{
\laddercoordinates{1}{1}
\ladderEn{0}{0}{$k-s-r$}{$l+s+r$}{$r+s$}
\node[left] at (l00) {$k$};
\node[right] at (l10) {$l$};
}
\end{equation}
Indeed equation (\ref{band}) follows by applying (\ref{eq2:HI}) to the left and right upright strands, then applying (\ref{eq2:bigon1}) to the resulting bigon.

Define the following morphisms in $\Perm$:
\begin{align*}
\nabla_{k,l}:& M^{k,l} \rightarrow \C = M^{k+l} ~;~ \sum_{g \in G}
k_g g \mapsto \sum_{g \in G} k_g &\text{for $k_g \in \C$} \\
\Delta_{k,l}: & \C = M^{k+l} \rightarrow M^{k,l} ~;~ 1 \mapsto \sum_{g \in S_{k,l} / S_{k{+}l}} g = \frac{1}{k!l!} \sum_{g \in S_{k{+}l}} S_{k,l} g
\end{align*}

\begin{thm}\label{equiv} There is an isomorphism of $\C$-linear monoidal categories  $\Gamma: \Sp (\Perm) \rightarrow \Perm$ defined
\[
\fuse{k}{l}{k{+}l} \mapsto \nabla_{k,l}: M^{k,l} \rightarrow M^{k{+}l} \ \ \text{ , } \ \ \fork{k}{l}{k{+}l} \mapsto \Delta_{k,l}: M^{k{+}l} \rightarrow M^{k,l}
\]
In particular, the morphisms of $\Perm$ are $\circ$-generated by the morphisms $\nabla_{k,l}$ and $\Delta_{k,l}$.
\end{thm}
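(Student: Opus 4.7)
The plan is to realize $\Perm$ as a union of categories of $S_d$-equivariant maps between weight spaces and then apply the CKM Principle together with the ladder/web calculus of \cite{mainpaper}. Since morphisms in $\Perm$ vanish between $M^\lambda_d$ and $M^\mu_{d'}$ for $d \neq d'$, it suffices to establish the isomorphism on each full subcategory $\Perm(S_d)$. Fix $d$ and let $n \geq d$. By Schur-Weyl duality (case (\ref{eg1})), $P:= \bigotimes^{d} \C^n$ is a $(\gl_n, S_d)$-bimodule with multiplicity-free decomposition $P = \bigoplus_{\lambda \vdash d,\ \ell(\lambda) \leq n} V^\lambda \otimes S^\lambda$; it is saturated by (\cite{doty4}, Section 6), and $S_d$ acts faithfully when $n \geq d$. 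Its $\gl_n$-weight spaces are exactly the permutation modules $M^\mu$ for compositions $\mu \vDash d$ with at most $n$ nonzero parts. The CKM Principle (Theorem \ref{CKM}) therefore gives an isomorphism between $\dU^P \gl_n$ and the full subcategory of $\Rep S_d$ spanned by these $M^\mu$; letting $n$ be sufficiently large captures any given finite collection of permutation modules, so $\Perm(S_d)$ is recovered as the colimit.

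First I would define $\Gamma$ on the monoidal generators $\fuse{k}{l}{k+l}$ and $\fork{k}{l}{k+l}$ and extend freely. The four relations must be verified in $\Perm$ directly: (\ref{eq2:IH}) and (\ref{eq2:HI}) are the (co)associativity of the multiplication/averaging maps on double cosets; (\ref{eq2:bigon1}) is the identity $\nabla_{k,l} \circ \Delta_{k,l} = [S_{k+l} : S_{k,l}]\, \id = \binom{k+l}{l}\, \id$; and (\ref{eq2:commute}) is a double-coset calculation which is most transparent after Step 3 below, because it is the image under the CKM correspondence of Lusztig's commutator identity
\[
E_i^{(r)} F_i^{(s)} 1_\lambda = \sum_t \binom{\lambda(h_i)+r-s}{t}\, F_i^{(s-t)} E_i^{(r-t)} 1_\lambda.
\]
Essential surjectivity of $\Gamma$ is immediate from the definition of objects.

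For fullness I would use the CKM Principle: every morphism in $\Perm(S_d)$ is (via $\mathcal{F}_P$) in the image of some element of $\dU^P \gl_n$, and $\dU \gl_n$ is generated as a $\C$-linear category by the divided powers $E_i^{(r)} 1_\lambda$ and $F_i^{(r)} 1_\lambda$. It therefore suffices to identify each ladder rung with an explicit spider in the image of $\Gamma$: $E_i^{(r)} 1_\lambda$ corresponds to the composition of a fork splitting a $\lambda_{i+1}$-strand into a strand of width $\lambda_{i+1}-r$ and a strand of width $r$, followed by a fuse of the $r$-strand onto the $i$-th strand (the same shape as the pictures $M_i 1_\lambda$ and $M'_i 1_\lambda$ drawn in the introduction, with $r$ units of weight transported across). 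This dictionary is the one developed in the ladder presentation of \cite{mainpaper}.

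Faithfulness is the main obstacle. The plan is to show that any relation among spider diagrams that holds in $\Perm(S_d)$ is already a consequence of (\ref{eq2:IH})--(\ref{eq2:commute}). Translating through the dictionary of the previous paragraph, one needs to verify that the defining relations (\ref{Ug1})--(\ref{Ug3}) of $\dU \gl_n$ follow from the four spider relations: (\ref{eq2:commute}) directly encodes (\ref{Ug1}) (and, between distinct indices, (\ref{Ug2}) via its $t=0$ term after appropriate planar isotopy), while the Serre relations (\ref{Ug3}) drop out of (\ref{eq2:IH}) and (\ref{eq2:HI}) in the standard way (the only non-zero compositions after fork-fuse rewriting reduce to the banded identity (\ref{band})). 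Once this is done, any spider morphism in the kernel of $\Gamma$ can be rewritten via the spider relations into a normal form indexed by $\dU^P \gl_n / \Ker \dot{F}_P$, which vanishes by Lemma \ref{Sat}. Combining well-definedness, essential surjectivity, fullness and faithfulness yields the claimed isomorphism of $\C$-linear monoidal categories.
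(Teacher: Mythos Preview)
Your overall strategy matches the paper's: reduce to weight-space categories via Schur--Weyl duality, apply the CKM Principle to identify these with quotients of $\dU\gl_n$, and pass between ladders and spiders using the calculus of \cite{mainpaper}. Fullness and well-definedness proceed essentially as you outline.

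The gap is in your faithfulness argument. You verify that the defining relations of $\dU\gl_n$ follow from the four spider relations; this gives a well-defined functor $\Theta^n_d:\dot{\mathcal{S}}(n,d)\to\Sp(\Perm)$, and the CKM Principle tells you $\Gamma$ is faithful on the image of $\Theta^n_d$. But then you assert that ``any spider morphism in the kernel of $\Gamma$ can be rewritten via the spider relations into a normal form indexed by $\dU^P\gl_n/\Ker\dot F_P$'' without justification. This is precisely the nontrivial step: one must know that \emph{every} spider equals a ladder in $\Sp(\Perm)$, i.e.\ that $\Theta^n_d$ is surjective on morphisms for suitable $n$. The paper does not prove this itself but cites it as \cite[Theorem~5.3.1]{mainpaper}; you need to invoke (or reprove) that result. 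Showing that ladder relations follow from spider relations is the easy direction and does not by itself yield a normal form for arbitrary spiders.

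A secondary point: to make $\Gamma$ a \emph{monoidal} isomorphism you need that horizontal juxtaposition of ladders corresponds to the $\circ$-product $\Ind_{S_d\times S_{d'}}^{S_{d+d'}}(\cdot\boxtimes\cdot)$ on $\Perm$. The dictionary in \cite{mainpaper} is for fundamental $\sl_n$-representations, not permutation modules, so this compatibility (the paper's Proposition~\ref{bjm}) and the explicit identification of $E^{(r)}_i 1_\lambda$, $F^{(r)}_i 1_\lambda$ with $\nabla$--$\Delta$ composites on permutation modules (the paper's Lemma~\ref{l2}) require a short separate check rather than a citation.
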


The proof of Theorem \ref{equiv} will cover Sections \ref{Ugln},  \ref{Snd}, and \ref{proof}.
In Section \ref{Ugln} we give a diagrammatic presentation of $\dU \gl_n$. In Section \ref{Snd} we apply Schur-Weyl Duality and the CKM Principle to identify the category, $\dot{\mathcal{S}} (n,d)$, of weight spaces of $\bigotimes^d \C^n$ with a quotient of $\dU \gl_n$. In Section \ref{proof} we use this presentation of $\dot{\mathcal{S}} (n,d)$ to construct a presentation of $\Perm$.

 \subsection{The category $\dU \gl_n$}\label{Ugln}
Fix the Cartan subalgebra, $\mathfrak{h}$, of $\gl_n$ to be the space of diagonal matrices. We write integral weights of $\gl_n$ by their vector coordinates with respect to the basis of $\mathfrak{h}^*$ dual to the standard basis $e_{11}, \ldots , e_{nn}$ of $\mathfrak{h}$. Let $\alpha_i $ be the sequence $(0, \ldots , 0 , 1 , -1, 0 , \ldots , 0) \in \Z^n$,
where the $1$ is in the $i$-th position. 

The $\C$-linear category $\dot{\mathcal{U}} \mathfrak{gl}_n $ is defined:
\begin{itemize}
\item
Objects: Sequences $\lambda = (\lambda_{1} , \ldots , \lambda_{n}) \in \Z^n $.
\item
Morphisms: 
The morphisms are generated by $E^{(r)}_{i} 1_{\lambda} \in 1_{\lambda + r \alpha_i} \dot{\mathcal{U}} \mathfrak{gl}_n  1_{\lambda}$ and $F^{(r)}_{i} 1_{\lambda} \in 1_{\lambda - r \alpha_i} \dot{\mathcal{U}} \mathfrak{gl}_n  1_{\lambda}$ for $ 1 \leq i \leq n-1$ and satisfy the relations:
\begin{align}
\label{rel:1}
\hspace*{-3.5cm} E_i^{(r)} F_i^{(s)} 1_{\lambda} &= \sum_t \binom{\lambda_i -\lambda_{i+1} + r - s}{t} F_i^{(s-t)} E_i^{(r-t)}1_{\lambda}&
\displaybreak[1]\\
\label{rel:2}  E_i^{(r)} F_j^{(s)} 1_{\lambda} &= F_j^{(s)} E_i^{(r)} 1_{\lambda}, & \text{ if $i \ne j$} \displaybreak[1]\\
\label{rel:3} E_iE_jE_i 1_{\lambda} &= (E_i^{(2)} E_j + E_j E_i^{(2)}) 1_{\lambda}, & \text{ if $ |i - j| = 1 $} \displaybreak[1]\\
\label{rel:4} E_i^{(r)} E_j^{(s)} 1_{\lambda} &= E_j^{(s)} E_i^{(r)} 1_{\lambda} &\text{ if $ |i-j| > 1$} \displaybreak[1]\\
\label{rel:5} E_i^{(s)} E_i^{(r)} 1_{\lambda}&= \binom{r+s}{r} E_i^{(r+s)} 1_{\lambda} &  
\end{align}
and likewise with $F$ and $E$ interchanged in equations (\ref{rel:3}), (\ref{rel:4}), (\ref{rel:5}).
\end{itemize}

The generating morphisms $E_i 1_{\lambda}$ and $F_i 1_{\lambda}$ correspond to the Chevalley generators $E_i = e_{i,i+1}$ and 
$F_i = e_{i+1, i}$ for $ 1 \leq i \leq n-1$.
The generating morphisms $E^{(r)}_{i} 1_{\lambda}$ and $F^{(r)}_{i} 1_{\lambda}$ for $r >1$ are the divided powers
\begin{align*}
E^{(r)}_{i} 1_{\lambda} = \frac{1}{r!} E^{r}_i 1_{\lambda} \qquad \text{ and } \qquad F^{(r)}_{i} 1_{\lambda} = \frac{1}{r!} F^{r}_i 1_{\lambda}
\end{align*}
We give the presentation of  $\dot{\mathcal{U}} \mathfrak{gl}_n $ in terms of these redundant generators because they are more amenable to diagrammatic calculus.

Cautis-Kamnitzer-Morrison \cite{mainpaper} identified morphisms in $\dU \gl_n$ with ``ladder" diagrams. 
We define an \textbf{$n$-ladder} to be an oriented plane diagram consisting of $n$ upwards oriented vertical strands labelled by integers, and (horizontal) strands connecting adjancent uprights and labeled by nonnegative integers. Following \cite{mainpaper} we identify morphisms in $\dU \gl_n$ with diagrams in the following way,

\begin{align}\label{glnladder}
E_i^{(r)} \one_{\lambda} &=
\begin{tikzpicture}[baseline=20, scale=0.75]
\laddercoordinates{3}{1}
\node[below] at (l00) {$\lambda_{i{-}1}$};
\node[below] at (l10) {$\lambda_i$};
\node[below] at (l20) {$\lambda_{i{+}1}$};
\node[below] at (l30) {$\lambda_{i{+}2}$};
\node[above] at (l11) {$\lambda_i{+}r$};
\node[above] at (l21) {$\lambda_{i{+}1}{-}r$};
\node at ($(l00)+(-1,1)$) {$\cdots$};
\ladderI{0}{0};
\ladderFn{1}{0}{}{}{$r$}
\ladderI{3}{0};
\node at ($(l30)+(1,1)$) {$\cdots$};
\end{tikzpicture} 
\qquad
\text{and}
\qquad
F_i^{(r)} \one_{\lambda} &=
\begin{tikzpicture}[baseline=20, scale=0.75]
\laddercoordinates{3}{1}
\node[below] at (l00) {$\lambda_{i{-}1}$};
\node[below] at (l10) {$\lambda_i$};
\node[below] at (l20) {$\lambda_{i{+}1}$};
\node[below] at (l30) {$\lambda_{i{+}2}$};
\node[above] at (l11) {$\lambda_i{-}r$};
\node[above] at (l21) {$\lambda_{i{+}1}{+}r$};
\node at ($(l00)+(-1,1)$) {$\cdots$};
\ladderI{0}{0};
\ladderEn{1}{0}{}{}{$r$}
\ladderI{3}{0};
\node at ($(l30)+(1,1)$) {$\cdots$};
\end{tikzpicture} 
\end{align}
with composition of morphisms drawn by vertical juxtaposition. The relations (\ref{rel:1})-(\ref{rel:5}) simplify to invariance under any planar isotopy that preserves the upward-orientation of the strands, as well as the following relations, 
\begin{align}
\begin{tikzpicture}[baseline=40,yscale=0.75]
\laddercoordinates{2}{2}
\node[below] at (l00) {$k_1$};
\node[below] at (l10) {$k_2$};
\node[below] at (l20) {$k_3$};
\ladderEn{0}{0}{$k_1{-}r$}{$k_2{+}r$}{$r$}
\ladderFn{1}{1}{$k_2{+}r{+}s$}{$k_3{-}s$}{$s$}
\ladderIn{0}{1}{1}
\ladderIn{2}{0}{1}
\end{tikzpicture}
&=
\begin{tikzpicture}[baseline=40,yscale=0.75]
\laddercoordinates{2}{2}
\node[below] at (l00) {$k_1$};
\node[below] at (l10) {$k_2$};
\node[below] at (l20) {$k_3$};
\ladderEn{0}{1}{$k_1{-}r$}{$k_2{+}r{+}s$}{$r$}
\ladderFn{1}{0}{$k_2{+}s$}{$k_3{-}s$}{$s$}
\ladderIn{0}{0}{1}
\ladderIn{2}{1}{1}
\end{tikzpicture}
\label{eq:IHlad}
\displaybreak[1] \\
\begin{tikzpicture}[baseline=40,yscale=0.75]
\laddercoordinates{2}{2}
\node[below] at (l00) {$k_1$};
\node[below] at (l10) {$k_2$};
\node[below] at (l20) {$k_3$};
\ladderFn{0}{0}{$k_1{+}r$}{$k_2{-}r$}{$r$}
\ladderEn{1}{1}{$k_2{-}r{-}s$}{$k_3{+}s$}{$s$}
\ladderIn{0}{1}{1}
\ladderIn{2}{0}{1}
\end{tikzpicture}
&=
\begin{tikzpicture}[baseline=40,yscale=0.75]
\laddercoordinates{2}{2}
\node[below] at (l00) {$k_1$};
\node[below] at (l10) {$k_2$};
\node[below] at (l20) {$k_3$};
\ladderFn{0}{1}{$k_1{+}r$}{$k_2{-}r{-}s$}{$r$}
\ladderEn{1}{0}{$k_2{-}s$}{$k_3{+}s$}{$s$}
\ladderIn{0}{0}{1}
\ladderIn{2}{1}{1}
\end{tikzpicture}
\label{eq:IHlad2}
\displaybreak[1] \\
\tikz[baseline=40,yscale=0.75]{
\laddercoordinates{1}{2}
\ladderEn{0}{0}{$k-s$}{$l+s$}{$s$}
\ladderEn{0}{1}{$k-s-r$}{$l+s+r$}{$r$}
\node[left] at (l00) {$k$};
\node[right] at (l10) {$l$};
}
&=
\binom{r+s}{r}
\tikz[baseline=20]{
\laddercoordinates{1}{1}
\ladderEn{0}{0}{$k-s-r$}{$l+s+r$}{$r+s$}
\node[left] at (l00) {$k$};
\node[right] at (l10) {$l$};
}
\label{eq:EE}
\displaybreak[1]
\\
\begin{tikzpicture}[baseline=40]
\laddercoordinates{1}{2}
\node[left] at (l00) {$k$};
\node[right] at (l10) {$l$};
\ladderEn{0}{0}{$k{-}s$}{$l{+}s$}{$s$}
\ladderFn{0}{1}{$k{-}s{+}r$}{$l{+}s{-}r$}{$r$}
\end{tikzpicture}
&= \sum_t \binom{k-l+r-s}{t}
\begin{tikzpicture}[baseline=40]
\laddercoordinates{1}{2}
\node[left] at (l00) {$k$};
\node[right] at (l10) {$l$};
\ladderFn{0}{0}{$k{+}r{-}t$}{$l{-}r{+}t$}{$r{-}t$}
\ladderEn{0}{1}{$k{-}s{+}r$}{$l{+}s{-}r$}{$s{-}t$}
\end{tikzpicture}
\label{eq:EF=FE}
\displaybreak[1]
\end{align}
\begin{equation}
\label{eq:REP4}
\renewcommand{\ladderY}{1}
\begin{ladder}{2}{3}
\ladderF{0}{0}{}{}
\ladderF{0}{1}{}{}
\ladderF{1}{2}{}{}
\ladderI{0}{2}
\ladderIn{2}{0}{2}
\end{ladder}
- 2
\begin{ladder}{2}{3}
\ladderF{0}{0}{}{}
\ladderF{1}{1}{}{}
\ladderF{0}{2}{}{}
\ladderI{0}{1}
\ladderI{2}{0}
\ladderI{2}{2}
\end{ladder}
+
\begin{ladder}{2}{3}
\ladderF{1}{0}{}{}
\ladderF{0}{1}{}{}
\ladderF{0}{2}{}{}
\ladderI{0}{0}
\ladderIn{2}{1}{2}
\end{ladder}
= 0
\end{equation}
together with their mirror reflections in the $y$-axis. We interpret unlabeled horizontal strands as being labeled by 1, and unlabeled vertical strands as being labeled by arbitrary compatible labels. These diagrams are to be interpreted as having some number of vertical strands to the left and right.

Define a \textbf{$\gl_n$-ladder} to be an equivalence class of $n$-ladders, where the equivalence relation is given by equations (\ref{eq:IHlad}) - (\ref{eq:REP4}) and invariance under orientation-preserving planar isotopy. Given two sequences $\lambda$, $\mu$ of integers we say that a $\gl_n$-ladder \textbf{connects $\lambda$ to $\mu$} if the sequence of edges along the bottom (respectively top) of the ladder diagram is $\lambda$ (respectively $\mu$). 

By definition of $\gl_n$-ladders we have that $\dU \gl_n$ is isomorphic to the category with
\begin{itemize}
\item
Objects: Sequences $\lambda \in \Z^n$.
\item
Morphisms: Morphisms from $\lambda$ to $\mu$ are $\C$-linear combinations of $\gl_n$-ladders connecting $\lambda$ to $\mu$.
\end{itemize}

\subsection{Schur-Weyl Duality and the CKM Principle}\label{Snd}
Let $V:= \C^n$. 
The weights of $V^{\otimes d}$ are elements of the set 
\[
\Lambda (n,d) := \{ (\lambda_1 , \ldots , \lambda_n)  \in \Z^{n}_{\geq 0} | \sum_{i} \lambda_i = d
\}.
\] 
Let $\Lambda^{+} (n,d)$ be the set of $\gl_n$-dominant weights of $\Lambda (n,d)$.
Schur-Weyl Duality states that $V^{\otimes d}$ is a $( \gl_n , \C[S_d])$-bimodule with saturated multiplicity free decomposition
\[
V^{\otimes d} = \bigoplus_{
\lambda \in \Lambda^{+} (n,d)
} V^{\lambda} \otimes L^{\lambda}
\]
where the $V^{\lambda}$ are the irreducible $\gl_n$-modules of highest weight $\lambda$, and the $L^{\lambda}$ are the Specht modules.

\begin{prop}\label{scwe}
Let $\dot{\mathcal{S}} (n,d)$ be the full subcategory of $\Rep S_d$ whose objects are the $\gl_n$-weight spaces of $V^{\otimes d}$. Then $\dot{\mathcal{S}} (n,d)$ is isomorphic to the category defined
\begin{itemize}
\item
Objects: Sequences $\lambda \in \Lambda (n,d)$.
\item
Morphisms: Morphisms from $\lambda$ to $\mu$ are $\C$-linear combinations of $\gl_n$-ladders connecting $\lambda$ to $\mu$, and satisfying the relation
\begin{align}\label{queen}
\begin{tikzpicture}[baseline=20, scale=0.75]
\laddercoordinates{3}{1}
\node[below] at (l00) {$\nu_{1}$};
\node[below] at (l20) {$\nu_{n}$};
\ladderI{0}{0};
\node at ($(l10)+(0,1)$) {$\cdots$};
\ladderI{2}{0};
\end{tikzpicture} 
&= 0
&\text{if $\nu_i < 0$ for some $i$.}
\end{align}
\end{itemize}
\end{prop}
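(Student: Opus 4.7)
The plan is a direct application of the CKM Principle (Theorem \ref{CKM}) to Schur-Weyl duality, followed by a translation of the resulting quotient of $\dU \gl_n$ into the ladder diagrammatics of Section \ref{Ugln}. First I would invoke the fact noted in (\ref{eg1}) that $V^{\otimes d}$ is a saturated $(\gl_n, \C[S_d])$-bimodule with the multiplicity-free decomposition $V^{\otimes d} = \bigoplus_{\lambda \in \Lambda^{+}(n,d)} V^{\lambda} \otimes L^{\lambda}$. Theorem \ref{CKM} then gives an isomorphism $\dU^{V^{\otimes d}} \gl_n \cong \dot{\mathcal{S}}(n,d)$, where by definition $\dU^{V^{\otimes d}} \gl_n$ is obtained from $\dU \gl_n$ by restricting to the objects $\Pi(V^{\otimes d}) = \Lambda(n,d)$ and quotienting by morphisms that factor through an object $\nu \notin \Lambda(n,d)$.

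Next, I would use the ladder presentation of $\dU \gl_n$ recalled in Section \ref{Ugln}. The key observation is that every generator $E_i^{(r)} 1_{\lambda}$ and $F_i^{(r)} 1_{\lambda}$ preserves the sum of the weight coordinates (since each simple root $\alpha_i$ has coordinate sum zero). Consequently, any $\gl_n$-ladder connecting $\lambda, \mu \in \Lambda(n,d)$ has intermediate weights (between consecutive horizontal rungs) whose coordinates sum to $d$. The only intermediate weights $\nu$ outside $\Lambda(n,d)$ are therefore those with some $\nu_i < 0$, which identifies exactly the bad intermediate weights targeted by the relation (\ref{queen}).

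The main step is then to verify that the two-sided ideal of $\gl_n$-ladders factoring through a weight with some negative entry coincides with the ideal generated by relation (\ref{queen}). One direction is immediate: the identity ladder on $\nu$ trivially factors through $\nu$, so it lies in the CKM ideal. For the reverse, any $\gl_n$-ladder exhibiting an intermediate weight $\nu$ with $\nu_i < 0$ at some horizontal slice can be cut along that slice and rewritten as $g \circ \id_{\nu} \circ h$, where $g$ and $h$ are the top and bottom portions of the ladder; applying (\ref{queen}) makes the whole ladder zero. Since in the ladder calculus every ``factoring through'' an intermediate weight is witnessed between some pair of consecutive rungs, this exhausts the CKM ideal. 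Combining with step one identifies $\dU^{V^{\otimes d}} \gl_n$ with the category described in the proposition.

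I do not anticipate a serious obstacle here; the proof is essentially a bookkeeping exercise built on two inputs, namely the CKM Principle and the ladder presentation. The only conceptual point worth highlighting is the coordinate-sum invariance, which is what reduces the CKM quotient to the single family of relations (\ref{queen}) rather than to a more elaborate set of relations killing every weight outside $\Lambda(n,d)$.
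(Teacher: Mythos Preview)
Your proposal is correct and follows essentially the same approach as the paper: apply the CKM Principle to identify $\dot{\mathcal{S}}(n,d)$ with $\dU^{V^{\otimes d}}\gl_n$, then observe that since the simple roots $\alpha_i$ have coordinate sum zero, no $\gl_n$-ladder starting in $\Lambda(n,d)$ can reach a weight with $\sum_i\nu_i\neq d$, so the quotient relation reduces to killing identities on weights with a negative entry. The paper's proof is just a terser version of what you wrote; your extra paragraph about cutting ladders at a horizontal slice is a harmless elaboration of why killing the identities $1_\nu$ generates the full ideal of morphisms factoring through $\nu$.
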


\begin{proof}
By the CKM principle $\dot{\mathcal{S}} (n,d)$ is isomorphic to $\dU^{V^{\otimes d}} \gl_n$. Hence it suffices to show that (\ref{queen}) is equivalent to the relation
\begin{align*}
\begin{tikzpicture}[baseline=20, scale=0.75]
\laddercoordinates{3}{1}
\node[below] at (l00) {$\nu_{1}$};
\node[below] at (l20) {$\nu_{n}$};
\ladderI{0}{0};
\node at ($(l10)+(0,1)$) {$\cdots$};
\ladderI{2}{0};
\end{tikzpicture} 
&= 0
&\text{if $(\nu_1 , \ldots , \nu_n) \notin \Lambda (n , d)$.}
\end{align*}
Indeed $\nu \notin \Lambda (n,d)$ if and only if either $\sum_i \nu_i \neq d$ or $\nu_i < 0$ for some $i$. However it is impossible for a $\gl_n$-ladder to connect a sequence $\lambda \in \Lambda (n,d)$ to a sequence $\nu \in \Z^n$ in which $\sum_{i} \nu_i \neq d$. The result follows.
\end{proof}

By Proposition \ref{scwe} there is a bifunctor 
$
\cdot ||\cdot : \dot{\mathcal{S}} (n,d) \times \dot{\mathcal{S}}(m,d') \rightarrow \dot{\mathcal{S}} (n+m,d+d') 
$
defined on objects by $((\C^n)_{\lambda}^{\otimes d},(\C^m)_{\mu}^{\otimes d'}) \mapsto (\C^{m+n})_{(\lambda, \mu)}^{\otimes d+d'}$, and on morphisms by horizontal juxtaposition of ladder diagrams. 

\begin{rem}
For bifunctoriality of $\cdot ||\cdot $ we need that $\gl_n$-ladders are invariant under orientation preserving planar isotopy.
\end{rem}

\begin{prop}\label{bjm}
The bifunctor $\cdot ||\cdot$ is equal to the circle product bifunctor 
$\cdot \circ \cdot = \Ind_{S_d \times S_{d'}}^{S_{d+d'}}  (\cdot \boxtimes \cdot)
: \dot{\mathcal{S}} (n,d) \times \dot{\mathcal{S}}(m,d') \rightarrow \dot{\mathcal{S}} (n+m,d+d') 
$.
\end{prop}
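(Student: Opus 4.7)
The plan is to verify agreement of $\cdot\,||\,\cdot$ and $\cdot\circ\cdot$ on objects and then on morphisms, where the morphism check is reduced to ladder generators via bifunctoriality.

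For objects, I would decompose $\C^{n+m}=\C^{n}\oplus\C^{m}$ inside each of the $d+d'$ tensor slots, obtaining $(\C^{n+m})^{\otimes(d+d')}=\bigoplus_{\tau}V_\tau$, where $\tau$ ranges over maps $\{1,\ldots,d+d'\}\to\{0,1\}$ and $V_\tau$ is the tensor product with $\C^n$ (resp.\ $\C^m$) in slot $j$ when $\tau(j)=0$ (resp.\ $\tau(j)=1$). On the $(\lambda,\mu)$-weight space one must have $|\tau^{-1}(1)|=d'$, and $S_{d+d'}$ permutes those summands transitively with stabilizer $S_d\times S_{d'}$ at the distinguished choice $\tau_0(j)=0\Leftrightarrow j\le d$. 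Frobenius reciprocity then yields $(\C^{n+m})^{\otimes(d+d')}_{(\lambda,\mu)}\cong\Ind_{S_d\times S_{d'}}^{S_{d+d'}}\bigl((\C^n)^{\otimes d}_{\lambda}\boxtimes(\C^m)^{\otimes d'}_{\mu}\bigr)=M^{\lambda}\circ M^{\mu}$ as $S_{d+d'}$-modules. This coincides with $V^{\otimes d}_{\lambda}\,||\,V^{\otimes d'}_{\mu}$ by definition of $||$, so the two bifunctors agree on objects.

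For morphisms, I would use bifunctoriality to write $f\,||\,g=(f\,||\,\id)(\id\,||\,g)$ (and the same identity for $\circ$), reducing the task to showing $f\,||\,\id_{M^\mu}=f\circ\id_{M^\mu}$ together with its mirror. Since Proposition~\ref{scwe} presents $\dot{\mathcal{S}}(n,d)$ by $\gl_n$-ladders, it suffices to take $f$ to be a generator $E_i^{(r)}1_\lambda$ or $F_i^{(r)}1_\lambda$. For $f=E_i^{(r)}1_\lambda$, by construction $f\,||\,\id_{M^\mu}$ is the $\gl_{n+m}$-ladder $E_i^{(r)}1_{(\lambda,\mu)}$, which under Schur--Weyl is the action of $e_{i,i+1}^{(r)}\in U\gl_{n+m}$ on $(\C^{n+m})^{\otimes(d+d')}_{(\lambda,\mu)}$. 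Because $i<n$, the Chevalley generator $e_{i,i+1}$ annihilates all basis vectors of $\C^{n+m}$ outside the $\C^n$-block, so on the distinguished summand $V_{\tau_0}=(\C^n)^{\otimes d}\otimes(\C^m)^{\otimes d'}$ its divided power factors as $e_{i,i+1}^{(r)}\otimes\id$. Transporting along the object-level isomorphism and using that the $\gl_{n+m}$- and $S_{d+d'}$-actions commute, the resulting $S_{d+d'}$-morphism on $M^{\lambda}\circ M^{\mu}$ is exactly $\Ind_{S_d\times S_{d'}}^{S_{d+d'}}(E_i^{(r)}1_\lambda\boxtimes\id_{M^\mu})=f\circ\id_{M^\mu}$. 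The case $f=F_i^{(r)}1_\lambda$ and the mirror statement are identical.

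The main obstacle will be making the Frobenius-reciprocity identification on objects canonical enough to transport Chevalley-generator actions along it: one must verify that the inclusion $V_{\tau_0}\hookrightarrow(\C^{n+m})^{\otimes(d+d')}_{(\lambda,\mu)}$ is $(\gl_n\oplus\gl_m)$-equivariant and realizes the unit of the induction-restriction adjunction. Once this bookkeeping is in place, the diagrammatic horizontal juxtaposition of ladders and the algebraic induction of $S_{d+d'}$-maps align on generators, yielding the full agreement of bifunctors.
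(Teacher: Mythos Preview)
Your proposal is correct and follows essentially the same approach as the paper: reduce to generating ladders, use the canonical inclusion $(\C^n)^{\otimes d}\boxtimes(\C^m)^{\otimes d'}\hookrightarrow(\C^{n+m})^{\otimes(d+d')}$ on the distinguished summand, and invoke the universal property of induction to identify $E_i^{(r)}1_{(\lambda,\mu)}$ with $E_i^{(r)}1_\lambda\circ 1_\mu$. The paper's write-up is terser on the object-level identification (it takes $M^\lambda\circ M^\mu=M^{(\lambda,\mu)}$ as already established and simply names the inclusion $\iota$), whereas you spell out the $\tau$-decomposition and Frobenius reciprocity explicitly, but the substance of the morphism argument is the same.
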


\begin{proof}
It suffices to show that the bifunctors $\cdot ||\cdot$ and $\cdot \circ \cdot$ are equal on objects and generating morphisms.
Let $\lambda \in \Lambda (n,d)$ and $\mu \in \Lambda (m, d')$. 
Write $E_{i}^{(r)} 1_{\lambda} : (\C^n)_{\lambda}^{\otimes d} \rightarrow (\C^n)_{\lambda + \alpha_i}^{\otimes d}$ for the $S_d$-equivariant morphism $v \mapsto \frac{1}{r!}E_{i}^{r} v$, and $1_{\lambda}$ for the identity on $(\C^n)_{\lambda}^{\otimes d}$.
It suffices to show that
\begin{align}
\label{tull}
E_{i}^{(r)} 1_{\lambda} \circ 1_{\mu} &=
E_{i}^{(r)} 1_{(\lambda, \mu)} : (\C^{m+n})_{(\lambda, \mu)}^{\otimes d+d'} \rightarrow (\C^{m+n})_{(\lambda + \alpha_i, \mu)}^{\otimes d+d'}
\displaybreak[1]\\
1_{\lambda} \circ E_{i}^{(r)} 1_{\mu} &=
E_{i+n}^{(r)} 1_{(\lambda, \mu)} : (\C^{m+n})_{(\lambda, \mu)}^{\otimes d+d'} \rightarrow (\C^{m+n})_{(\lambda, \mu + \alpha_i)}^{\otimes d+d'}
\end{align}
and likewise with $F_i$ replacing $E_i$. We just show (\ref{tull}).

Let $\{ v_1 , \ldots , v_{n} \}$ and $\{ w_1 , \ldots w_{m} \}$ be the standard basis of $\C^{n}$ and $\C^{m}$ respectively. Write $\{ v_1 , \ldots , v_{n}, w_1 , \ldots w_{m} \}$ for the standard basis of 
$\C^{m+n}$.
There is a canonical inclusion of $\C [S_d] \times \C [S_{d'}]$-modules: 
\[
\iota: (\C^n)_{\lambda}^{\otimes d} \boxtimes (\C^m)_{\mu}^{\otimes d'} \rightarrow (\C^{m+n})_{(\lambda, \mu)}^{\otimes d+d'} ; v \otimes w \mapsto v \otimes w
\]
Define the $\C [S_d] \times \C [S_{d'}]$-equivariant morphism:
\[
(E_{i}^{(r)} 1_{\lambda} , 1_{\mu}): (\C^n)_{\lambda}^{\otimes d} \boxtimes (\C^m)_{\mu}^{\otimes d'} \rightarrow (\C^{m+n})_{(\lambda + \alpha_i, \mu)}^{\otimes d+d'} ; v \otimes w \mapsto E_{i}^{(r)} 1_{\lambda} (v) \otimes w
\]
Then $E_{i}^{(r)} 1_{(\lambda, \mu)}$ is the unique $\C[S_{d+d'}]$-equivariant morphism making the following diagram commute
\[
\xymatrix{
(\C^{m+n})_{(\lambda, \mu)}^{\otimes d+d'} \ar[rr]^{E_{i}^{(r)} 1_{(\lambda, \mu)}} && 
(\C^{m+n})_{(\lambda + \alpha_i, \mu)}^{\otimes d+d'}
 \\
(\C^n)_{\lambda}^{\otimes d} \boxtimes (\C^m)_{\mu}^{\otimes d'}
\ar[u]^{\iota} 
\ar[urr]_{(E_{i}^{(r)} 1_{\lambda}, 1_{\mu})}
}
\]
Equation (\ref{tull}) follows.

\end{proof}

\begin{rem}
Note that the Schur Algebra
$
S (n,d) := \End_{\C[S_d]} V^{\otimes d} \cong  \bigoplus_{\lambda , \mu \in \Lambda (n,d)} 
\Hom_{\dot{\mathcal{S}} (n,d)} (1_{\lambda}, 1_{\mu})
$.
Hence a presentation of $S(n,d)$ is given by generators $1_{\lambda}$, $\lambda \in \Z^n$, 
and $E_i , F_i$, $1 \leq i \leq n$. These satisfy relations  (\ref{rel:1}), (\ref{rel:2}), (\ref{rel:3}), (\ref{rel:4}) and 
\begin{align*}
1_{\lambda} &= 0 &\text{if $\lambda \notin \Lambda (n,d)$} \displaybreak[1]\\
1_{\lambda} 1_{\mu} &= \delta_{\lambda, \mu } 1_{\lambda} ,
\qquad \sum_{\lambda \in \Z^n} 1_{\lambda} = 1 \displaybreak[1]\\
E_i 1_{\lambda} &= 1_{\lambda + \alpha_i} E_i,
\qquad
F_i 1_{\lambda} = 1_{\lambda - \alpha_i} F_i
\end{align*}
This is the same presentation of $S (n,d)$ derived in \cite{doty}.
\end{rem}

\subsection{Proof of Theorem \ref{equiv}}\label{proof}
We now show that the functor $\Gamma: \Sp (\Perm) \rightarrow \Perm$ defined in Theorem \ref{equiv} is well defined and an isomorphism of categories. Our proof is essentially the same argument used in \cite{mainpaper} to give a diagrammatic presentation of the category of fundamental representations of $\sl_n$. 

We begin by defining the functor  $\Theta^{n}_d :  \dot{\mathcal{S}} (n,d) \rightarrow \Sp(\Perm)$
\begin{itemize}
\item
Objects: An object $V_{\lambda}^{\otimes d}$ is mapped to the composition, $\kappa(\lambda) \vDash d$ obtained from $\lambda$ by deleting 0 terms.
\item
Morphisms: $\gl_n$-ladders are identified with the $\Perm$-spiders drawn the same way.
\end{itemize}

To see that $\Theta^{n}_d$ is well defined we must check that equations (\ref{eq:IHlad})-(\ref{eq:REP4}) and (\ref{queen}) hold in $\Sp(Perm)$. It is immediate that (\ref{queen}) holds in $\Sp(\Perm)$. Equations (\ref{eq:IHlad}) and (\ref{eq:IHlad2}) hold in the category $\Sp (\Perm)$ by (\ref{eq2:IH}) and (\ref{eq2:HI}). Equations (\ref{eq:EE}) and (\ref{eq:EF=FE}) are the same as equations and (\ref{band}) and (\ref{eq2:commute}). Furthermore (\ref{eq:REP4}) holds in $\Sp (\Perm)$ -- see (\cite{mainpaper}, pp.8-9) for a proof.  

For any sequence $\lambda \in \Z^{n}$, let $l(\lambda)$ denote the length of $\lambda$.

\begin{lem}\label{l1}
The category $\dot{\mathcal{S}} (n,d)$ is equivalent to the full subcategory of $\Perm$ whose objects are the permutation modules $M^{\lambda}_{d}$ in which $l(\lambda) \leq n$.
Moreover the resulting fully faithful functors $\Psi^{n}_d : \dot{\mathcal{S}} (n,d) \rightarrow \Perm$ factor through $\Theta^{n}_d:  \dot{\mathcal{S}} (n,d) \rightarrow \Sp(\Perm)$.
\end{lem}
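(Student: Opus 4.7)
The plan is to combine the CKM Principle with a direct computation on generators. First I would establish the equivalence: by the CKM Principle (Theorem \ref{CKM}) applied to the Schur-Weyl bimodule $V^{\otimes d}$, the category $\dot{\mathcal{S}}(n,d)$ is the full subcategory of $\Rep S_d$ on the $\gl_n$-weight spaces of $V^{\otimes d}$. A direct calculation identifies $(V^{\otimes d})_{\lambda}$ with the permutation module $M^{\kappa(\lambda)}_d$: both are the cyclic $\C[S_d]$-module generated by an element whose stabilizer is the Young subgroup $S_{\lambda} = S_{\kappa(\lambda)}$, the generator being $v_1^{\otimes \lambda_1} \otimes \cdots \otimes v_n^{\otimes \lambda_n}$ on the Schur-Weyl side and $S_{\kappa(\lambda)} \cdot 1$ on the permutation module side. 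Since $\lambda \mapsto \kappa(\lambda)$ surjects $\Lambda(n,d)$ onto the set of compositions of $d$ of length at most $n$, this proves the first statement.

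Define $\Psi^{n}_{d}$ as the composition of this equivalence with the inclusion of the essential image into $\Perm$. For the factoring through $\Theta^{n}_{d}$: since $\Theta^{n}_{d}$ sends a $\gl_n$-ladder to the $\Perm$-spider drawn in the same way, and since morphisms of $\dot{\mathcal{S}}(n,d)$ are generated by the divided-power ladders $E_{i}^{(r)} 1_{\lambda}$ and $F_{i}^{(r)} 1_{\lambda}$, it suffices to check that $\Psi^{n}_{d}$ sends each such generator to the morphism of permutation modules obtained by interpreting the corresponding $\Perm$-spider via $\fuse{k}{l}{k{+}l} \mapsto \nabla_{k,l}$ and $\fork{k}{l}{k{+}l} \mapsto \Delta_{k,l}$. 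Concretely, $E_{i}^{(r)} 1_{\lambda}$ corresponds to a single horizontal strand of weight $r$ at height $i$, which in the $\Perm$-spider language is $\Delta_{r,\lambda_{i+1}-r}$ splitting the $(i{+}1)$-th strand followed by $\nabla_{\lambda_i, r}$ merging the weight-$r$ piece with the $i$-th strand. Equality on the nose is verified by evaluating both sides on the basis element $S_{\lambda} \cdot 1 \leftrightarrow v_1^{\otimes \lambda_1} \otimes \cdots \otimes v_n^{\otimes \lambda_n}$ and comparing; the case of $F_{i}^{(r)} 1_{\lambda}$ is symmetric.

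The main obstacle is the bookkeeping in this basis-level computation: one must carefully match the normalization $\frac{1}{r!}$ from the divided power $E_{i}^{(r)} = \frac{1}{r!} E_{i}^{r}$ against the normalization $\frac{1}{k! l!}$ appearing in $\Delta_{k,l}$, and track the sum over choices of $r$ tokens to be moved from position $i{+}1$ to position $i$ (encoded on the $\Perm$ side as a sum over cosets of the Young subgroup of the new weight). Once the match is verified on generators, the factoring extends uniquely because the ladder presentation from Proposition \ref{scwe} generates all of $\dot{\mathcal{S}}(n,d)$ and $\Theta^{n}_{d}$ is already a well-defined functor on that category.
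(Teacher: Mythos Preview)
Your treatment of the equivalence (first assertion) matches the paper's: identify $(V^{\otimes d})_{\lambda}$ with $M^{\kappa(\lambda)}$ via the orbit--stabilizer argument on the generator $v_1^{\otimes\lambda_1}\otimes\cdots\otimes v_n^{\otimes\lambda_n}$.

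Your argument for the factoring, however, has a genuine gap. You propose to verify that $\Psi^n_d$ sends each generator $E_i^{(r)}1_\lambda$, $F_i^{(r)}1_\lambda$ to ``the morphism of permutation modules obtained by interpreting the corresponding $\Perm$-spider via $\nabla,\Delta$'', and then conclude that the factoring extends from generators. But that interpretation \emph{is} the functor $\Gamma:\Sp(\Perm)\to\Perm$, whose well-definedness is precisely what Lemma~\ref{equivlem2} establishes \emph{using} Lemma~\ref{l1}. So your argument is circular. Stated without circularity, the issue is this: matching $\Psi^n_d$ with a putative $G\circ\Theta^n_d$ on generators does not produce a factoring unless you know $G$ respects the relations of $\Sp(\Perm)$, equivalently unless you know $\ker\Theta^n_d\subseteq\ker\Psi^n_d$ on each Hom space. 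Your final sentence (``the factoring extends uniquely because \ldots\ $\Theta^n_d$ is already a well-defined functor'') invokes the wrong direction: well-definedness of $\Theta^n_d$ says the $\dot{\mathcal S}(n,d)$ relations hold in $\Sp(\Perm)$, whereas factoring needs that no \emph{extra} relations are imposed by $\Theta^n_d$.

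The paper sidesteps this neatly. Since $\Psi^n_d$ is faithful, $\ker\Psi^n_d=0$, so the factoring is equivalent to $\Theta^n_d$ being faithful. That in turn reduces to checking that the defining relations (\ref{eq2:IH})--(\ref{eq2:commute}) of $\Sp(\Perm)$ already hold in $\dot{\mathcal S}(n,d)$; all are immediate from the ladder relations except the bigon relation (\ref{eq2:bigon1}), which is the special case $l=k-s-r=0$ of (\ref{eq:EE}). The explicit basis computation you outline is not wasted---it is exactly what the paper carries out separately in Lemma~\ref{l2}---but it belongs there, not here.
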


\begin{proof}
Let $\{ v_1, \ldots , v_n\}$ denote the standard basis of $\C^n$. 
For $\lambda \in \Lambda (n,d)$, the $S_d$-orbit of 
\[
v_{\lambda} := \underbrace{v_1 \otimes \cdots \otimes v_1}_\textrm{$\lambda_1$ times} \otimes \underbrace{v_2 \otimes \cdots \otimes v_2}_\textrm{$\lambda_2$ times} \otimes \cdots \otimes \underbrace{v_n \otimes \cdots \otimes v_n}_\textrm{$\lambda_n$ times}
\]
is a basis of the $\lambda$-weight space, $V_{\lambda}^{\otimes d}$, of $V^{\otimes d}$.
By the orbit-stabiliser relation, $V_{\lambda}^{\otimes d}$ is isomorphic, in $\Rep S_d$, to the permutation module $\C [S_{\lambda} \backslash S_d] = M^{\kappa(\lambda)}$. The first result follows.

We now show that $\Psi^{n}_d$ factors through $\Theta^{n}_d$. It suffices to show that, for any $\lambda, \mu \in \Lambda (n,d)$, the kernel of the linear map
\[
\Hom_{\dot{\mathcal{S}} (n,d)} (V^{\otimes d}_{\lambda}, V^{\otimes d}_{\mu})
\rightarrow
\Hom_{\Sp (\Perm)} (\kappa(\lambda) , \kappa(\mu))
\]
defined by $\Theta^{n}_d$ is contained in the kernel of the map
\[
\Hom_{\dot{\mathcal{S}} (n,d)} (V^{\otimes d}_{\lambda}, V^{\otimes d}_{\mu})
\rightarrow
\Hom_{\Perm} (M^{\kappa (\lambda)}, M^{\kappa (\mu)})
\]
defined by $\Psi^{n}_d$.
Since $\Psi^{n}_d$ is faithful, it suffices to show that $\Theta^{n}_d$ is faithful
i.e. the equations (\ref{eq2:IH}) - (\ref{eq2:commute}) hold in 
$\dot{\mathcal{S}} (n,d)$. The only equation that is not immediate is (\ref{eq2:bigon1}), which is a special case of (\ref{eq:EE}) with $l=k-s-r=0$.
\end{proof}

\begin{rem}
Let $\phi_{\lambda} : V^{\otimes d}_{\lambda} \rightarrow M^{\kappa(\lambda)}$ be the isomorphism in $\Rep S_d$ defined $v_{\lambda} \cdot g \mapsto S_{\kappa(\lambda)} g$ for all $g \in S_d$.
Then $\Psi^{n}_d : \dot{\mathcal{S}} (n,d) \rightarrow \Perm$ is defined explicitly
\begin{itemize}
\item
Objects: An object $V_{\lambda}^{\otimes d}$ is mapped to $M^{\kappa(\lambda)}$.
\item
Morphisms: A morphism $f: V_{\lambda}^{\otimes d} \rightarrow V_{\mu}^{\otimes d}$ in $\dot{\mathcal{S}} (n,d)$ is mapped to the morphism 
$\phi_{\mu} f \phi_{\lambda}^{-1}: M^{\kappa(\lambda)} \rightarrow M^{\kappa(\mu)}$ in $\Perm$.
\end{itemize}
\end{rem}

\begin{lem}\label{equivlem2}
The functor $\Gamma: \Sp (\Perm) \rightarrow \Perm$ is well defined and is the unique monoidal functor making the following diagram commute for all $n,d$:
\begin{equation}\label{prfprm}
\xymatrix{
 \dot{\mathcal{S}} (n,d) \ar[r]^{\Theta^{n}_d} \ar[dr]_{\Psi^{n}_d} & \Sp (\Perm) \ar[d]^{\Gamma}
 \\
 & \Perm
}
\end{equation}
\end{lem}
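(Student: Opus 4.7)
The plan is to force $\Gamma$ through the commutativity of (\ref{prfprm}) in a single package. First I would extend $\Gamma$ to objects by $\Gamma(\lambda_1,\ldots,\lambda_k) := M^{\lambda}_{|\lambda|}$; this is forced since every such sequence equals $\kappa(\tilde\lambda)$ for some $\tilde\lambda\in\Lambda(n,|\lambda|)$ with $n$ large enough. For uniqueness on generating morphisms, I would observe that $\fuse{k}{l}{k+l} = \Theta^2_{k+l}(E_1^{(l)} 1_{(k,l)})$ and $\fork{k}{l}{k+l} = \Theta^2_{k+l}(F_1^{(l)} 1_{(k+l,0)})$, after identifying $(k+l,0)$ with $(k+l)$ via deletion of zero-labeled strands. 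The commutativity of (\ref{prfprm}) then forces $\Gamma$ on the monoidal generators of $\Sp(\Perm)$, so the uniqueness statement reduces to verifying that these forced values agree with $\nabla_{k,l}$ and $\Delta_{k,l}$.

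For these verifications I would use the explicit description of $\Psi^n_d$ from the remark following Lemma \ref{l1}. On the fuse side, $E_1^{(l)}$ sends every orbit basis vector of $V^{\otimes(k+l)}_{(k,l)}$ onto the same vector $v_{(k+l,0)}$, so conjugating by $\phi_{(k,l)}$ and $\phi_{(k+l,0)}$ maps every coset $S_{(k,l)} g$ to $1 \in M^{(k+l)} = \C$, matching $\nabla_{k,l}$. On the fork side, $F_1^{(l)} v_{(k+l,0)} = \sum_{S\subset\{1,\ldots,k+l\},\,|S|=l} v_S$ is the sum over every orbit basis vector of $V^{\otimes(k+l)}_{(k,l)}$, which corresponds under $\phi_{(k,l)}$ to $\sum_{g\in S_{(k,l)}\backslash S_{k+l}} S_{(k,l)} g = \Delta_{k,l}(1)$.

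For well-definedness of $\Gamma$ it remains to check that the defining spider relations (\ref{eq2:IH})--(\ref{eq2:commute}) are respected. Each such relation involves only finitely many strands of bounded labels; choosing $n$ large enough to accommodate the widest horizontal cross-section and $d$ equal to the total bottom weight, both sides lift along $\Theta^n_d$ to ladder morphisms $L_1, L_2 \in \dot{\mathcal{S}}(n,d)$. Relations (\ref{eq2:IH}) and (\ref{eq2:HI}) arise from the ``I-H'' identities (\ref{eq:IHlad})--(\ref{eq:IHlad2}); (\ref{eq2:bigon1}) is the degenerate instance of (\ref{eq:EE}) via (\ref{band}); and (\ref{eq2:commute}) is precisely (\ref{eq:EF=FE}). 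Hence $L_1 = L_2$ in $\dot{\mathcal{S}}(n,d)$, and applying the already well-defined functor $\Psi^n_d$ delivers the needed identity in $\Perm$. The diagram (\ref{prfprm}) then commutes on all ladder generators $E_i^{(r)} 1_\lambda,\, F_i^{(r)} 1_\lambda$ (the computation of paragraph two applies verbatim with padding by zero-labeled identity strands on either side), and extends to all compositions by functoriality.

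The main obstacle is the explicit evaluation of $\Psi^2_{k+l}$ on the basic ladders $E_1^{(l)} 1_{(k,l)}$ and $F_1^{(l)} 1_{(k+l,0)}$, which requires carefully matching the $S_{k+l}$-orbit basis of $V^{\otimes(k+l)}_{(k,l)}$ with the coset basis of $M^{(k,l)}$; once this identification is in hand, the lifting argument for the spider relations is a bookkeeping exercise.
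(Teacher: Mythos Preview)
Your approach is essentially the paper's: force $\Gamma$ on the trivalent generators via the $n=2$ triangle (this is exactly Lemma~\ref{l2}), then argue well-definedness by lifting each spider relation to some $\dot{\mathcal{S}}(n,d)$ and pushing through $\Psi^n_d$. The explicit computation you sketch for $\Psi^2_{k+l}$ on the basic ladders is correct; the paper does the same, though it happens to realise the fork as $E_1^{(k)}1_{(0,k+l)}$ rather than your $F_1^{(l)}1_{(k+l,0)}$ --- both are legitimate lifts.

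There is, however, a genuine gap in the passage from the two-strand computation to the general triangle~(\ref{prfprm}). You write that commutativity on an arbitrary ladder generator $E_i^{(r)}1_\lambda$ follows from the paragraph-two computation ``with padding by zero-labeled identity strands on either side''. But the strands flanking positions $i,i{+}1$ carry the labels $\lambda_1,\ldots,\lambda_{i-1},\lambda_{i+2},\ldots,\lambda_n$, which are typically nonzero, and the corresponding spider $\Theta^n_d(E_i^{(r)}1_\lambda)$ is a \emph{rung} --- a fork--fuse composite tensored by nontrivial identities --- not a bare fuse or fork. What you need here is precisely Proposition~\ref{bjm}: that $\Psi^{n+m}_{d+d'}$ takes horizontal juxtaposition of ladders to the $\circ$-product in $\Perm$. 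With this in hand one writes $E_i^{(r)}1_\lambda = 1_{(\lambda_1,\ldots,\lambda_{i-1})}\,\|\,E_1^{(r)}1_{(\lambda_i,\lambda_{i+1})}\,\|\,1_{(\lambda_{i+2},\ldots,\lambda_n)}$ and reduces to the two-strand case; the paper invokes this explicitly. Without it your argument becomes circular: the well-definedness step (lifting spider relations to $\dot{\mathcal{S}}(n,d)$ and applying $\Psi^n_d$) already presupposes $\Gamma\Theta^n_d=\Psi^n_d$ on the relevant ladders, which is the very commutativity you are trying to establish.
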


We first show that Theorem \ref{equiv} follows from Lemma \ref{equivlem2}. 

Each morphism in $\Perm$ is of the form $\Psi^{n}_d (f)$ for some $n,d$. Hence (\ref{prfprm}) implies that $\Gamma$ is full. A special case of
Theorem 5.3.1 in 
\cite{mainpaper}
is that every $\Perm$-spider is equal, in $\Sp (\Perm)$, to a $\gl_n$-ladder for some $n,d$. Hence, every morphism in $\Sp (\Perm)$ is of the form 
$\Theta^{n}_d (f)$ for some $n,d$. Since each $\Psi^{n}_d$ is faithful, this and (\ref{prfprm}) imply that if $\Gamma (f) =0$ then $f=0$. That is, $\Gamma$ is faithful. Finally it is clear that $\Gamma$ is bijective on objects. Hence Lemma \ref{equivlem2} suffices to show that $\Gamma: \Sp (\Perm) \rightarrow \Perm$ is an isomorphism of categories.

To prove Lemma \ref{equivlem2} we require the following sublemma.

\begin{lem}\label{l2}
Any functor $Sp (\Perm) \rightarrow \Perm$ making
\begin{equation}\label{prfprm2}
\xymatrix{
 \dot{\mathcal{S}} (2,k+l) \ar[r]^{\Theta^{2}_{k+l}} \ar[dr]_{\Psi^{2}_{k+l}} & \Sp (\Perm) \ar[d]^{}
 \\
 & \Perm
}
\end{equation}
commute sends 
\[
\fuse{k}{l}{k+l} \mapsto \nabla_{k,l}: M^{k,l} \rightarrow M^{k+l} \ \ \text{ , } \ \ \fork{k}{l}{k+l} \mapsto \Delta_{k,l}: M^{k+l} \rightarrow M^{k,l}
\]
\end{lem}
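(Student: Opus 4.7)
My plan is to exhibit, for each pair $(k,l)$ of nonnegative integers, morphisms in $\dot{\mathcal{S}}(2, k+l)$ whose image under $\Theta^{2}_{k+l}$ is precisely the fuse vertex $\fuse{k}{l}{k+l}$ (resp.\ the fork vertex $\fork{k}{l}{k+l}$) and whose image under $\Psi^{2}_{k+l}$ is precisely $\nabla_{k,l}$ (resp.\ $\Delta_{k,l}$). Once such morphisms are produced, commutativity of (\ref{prfprm2}) forces any functor $\Omega : \Sp(\Perm)\to\Perm$ extending the triangle to send the fuse and fork vertices to $\nabla_{k,l}$ and $\Delta_{k,l}$ respectively.

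The natural candidates are the divided-power morphisms $\alpha_{k,l} := E^{(l)}_{1} 1_{(k,l)}$ and $\beta_{k,l} := F^{(l)}_{1} 1_{(k+l,0)}$. First I would check that $\Theta^{2}_{k+l}(\alpha_{k,l}) = \fuse{k}{l}{k+l}$: by (\ref{glnladder}), $\alpha_{k,l}$ is drawn as a two-upright ladder with bottom labels $(k,l)$, top labels $(k+l,0)$, and a single rung labeled $l$, and erasing the $0$-labeled strand at the upper right (per the definition of $\Theta^{2}_{k+l}$ on objects) yields exactly the fuse trivalent diagram. The analogous check with orientations flipped gives $\Theta^{2}_{k+l}(\beta_{k,l}) = \fork{k}{l}{k+l}$.

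Next I would compute $\Psi^{2}_{k+l}$ on each of these using the explicit description from the remark after Lemma~\ref{l1}. For $\alpha_{k,l}$, the derivation identity yields $E_{1}^{l} v_{(k,l)} = l!\, v_{(k+l,0)}$, so for every $g \in S_{k+l}$ the morphism $\alpha_{k,l}$ sends $v_{(k,l)} \cdot g \mapsto v_{(k+l,0)}$; transported through the isomorphisms $\phi_{(k,l)}$ and $\phi_{(k+l,0)}$ this becomes the map $S_{(k,l)} g \mapsto 1 \in \C = M^{(k+l)}$, which is precisely $\nabla_{k,l}$. For $\beta_{k,l}$, the key identity is $F_{1}^{(l)} v_{1}^{\otimes(k+l)} = \sum_{\overline g \in S_{(k,l)}\backslash S_{k+l}} v_{(k,l)} \cdot g$, obtained by expanding the divided power as a sum of distinct tensors carrying exactly $l$ factors of $v_{2}$ and applying orbit-stabilizer to the $S_{k+l}$-action on $v_{(k,l)}$ with stabilizer $S_{(k,l)}$; transporting through $\phi$ identifies $\Psi^{2}_{k+l}(\beta_{k,l})$ with the map $1 \mapsto \sum_{\overline g} \overline g$, which is $\Delta_{k,l}$.

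Combining these two steps with commutativity of (\ref{prfprm2}) gives, for any such $\Omega$,
\[
\Omega(\fuse{k}{l}{k+l}) = \Omega\bigl(\Theta^{2}_{k+l}(\alpha_{k,l})\bigr) = \Psi^{2}_{k+l}(\alpha_{k,l}) = \nabla_{k,l},
\]
and symmetrically for the fork vertex. The main technical obstacle is the orbit-stabilizer identification of $F_{1}^{(l)} v_{1}^{\otimes(k+l)}$ with the coset sum, but this is a routine combinatorial count; everything else is a matter of unwinding the definitions of $\Theta^{2}_{k+l}$, $\Psi^{2}_{k+l}$, and the ladder/spider conventions.
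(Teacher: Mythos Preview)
Your proposal is correct and follows essentially the same approach as the paper: exhibit ladder morphisms in $\dot{\mathcal{S}}(2,k+l)$ that $\Theta^{2}_{k+l}$ sends to the fuse and fork spiders, then compute their images under $\Psi^{2}_{k+l}$ explicitly on weight vectors. The only cosmetic difference is that for the fork the paper chooses the ladder $E^{(k)}_{1}1_{(0,k+l)}$ (deleting a zero strand on the bottom left) while you choose $F^{(l)}_{1}1_{(k+l,0)}$ (deleting a zero strand on the bottom right); both represent the same spider and the ensuing weight-vector computations are symmetric to one another.
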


\begin{proof}
Now,
\begin{align*}
\fuse{k}{l}{k+l} = 
\begin{tikzpicture}[baseline=20]
\laddercoordinates{3}{1}
\node[below] at (l10) {$k$};
\node[below] at (l20) {$l$};
\node[above] at (l11) {$k{+}l$};
\node[above] at (l21) {$0$};
\ladderFn{1}{0}{}{}{$l$}
\end{tikzpicture} 
\qquad
\text{ and }
\qquad
\fork{k}{l}{k+l} = 
\begin{tikzpicture}[baseline=20]
\laddercoordinates{3}{1}
\node[below] at (l10) {$0$};
\node[below] at (l20) {$k{+}l$};
\node[above] at (l11) {$k$};
\node[above] at (l21) {$l$};
\ladderFn{1}{0}{}{}{$k$}
\end{tikzpicture} 
\end{align*}
in $\Sp (\Perm)$. 
Hence it suffices to show that
\begin{align}
\label{g1}
\Psi^{2}_{k+l} (E^{(l)}_1 1_{k,l}) 
&:M^{k,l} \rightarrow M^{k+l}; S_{k,l} g \mapsto S_{k+l}
\displaybreak[1]\\
\label{g2}
\Psi^{2}_{k+l} (E^{(k)}_1 1_{0,k+l})
&:
M^{k+l} \rightarrow M^{k,l};
S_{k+l} \mapsto \frac{1}{k!l!}
\sum_{g \in S_{k+l}} S_{k,l}g
\end{align}
Let $V=\C^{2}$. Description (\ref{g1}) holds since 
$E^{(l)}_1 1_{k,l}: V_{k,l}^{\otimes k{+}l} \rightarrow V_{k{+}l,0}^{\otimes k{+}l}$ maps
\[
 (\underbrace{v_1 \otimes \cdots \otimes v_1}_\textrm{$k$ times} \otimes \underbrace{v_2 \otimes \cdots \otimes v_2}_\textrm{$l$ times} ) \cdot g
\mapsto
v_1 \otimes \cdots \otimes v_1
\]
for all $g \in S_{k{+}l}$.
Description (\ref{g2}) holds since $E^{(k)}_1 1_{0,k+l}: V_{0,k{+}l}^{\otimes k{+}l} \rightarrow V_{k,l}^{\otimes k{+}l}$ maps 
\begin{align*}
v_2 \otimes \cdots \otimes v_2 &\mapsto
\sum_{\substack{R \subseteq \{1 , \ldots , k{+}l \} \\ |R|=k}}
v_{i_R (1)} \otimes \cdots \otimes v_{i_R (d)},
\;\;\;\;\;\;\;\;\;\;\;\;\;\;\; \text{where }
i_R (j) =
\begin{cases}
1 &\text{if $j \in R$,} \\
2 &\text{otherwise} 
\end{cases}
\displaybreak[1]\\
&=
\frac{1}{k!l!} \sum_{g \in S_{k{+}l}}
 (\underbrace{v_1 \otimes \cdots \otimes v_1}_\textrm{$k$ times} \otimes \underbrace{v_2 \otimes \cdots \otimes v_2}_\textrm{$l$ times} ) \cdot g
\end{align*}
This completes the proof of Lemma \ref{l2}.
\end{proof}

By Lemma \ref{l1} there is a unique functor $\Theta^{n}_d (\dot{\mathcal{S}} (n,d)) \rightarrow \Perm$
making the diagram
\begin{equation*}
\xymatrix{
 \dot{\mathcal{S}} (n,d) \ar[r]^{\Theta^{n}_d} \ar[dr]_{\Psi^{n}_d} & \Theta^{n}_d (\dot{\mathcal{S}} (n,d)) \ar[d]
 \\
 & \Perm
}
\end{equation*}
commute. By Lemma \ref{l2} and since the bifunctor $
\dot{\mathcal{S}} (n,d) \times \dot{\mathcal{S}}(m,d') \rightarrow \dot{\mathcal{S}} (n+m,d+d') 
$
defined by horizontal juxtaposition of diagrams
agrees with the circle product, this unique functor is $\Gamma |_{\Theta^{n}_d (\dot{\mathcal{S}} (n,d))} :\Theta^{n}_d (\dot{\mathcal{S}} (n,d)) \rightarrow \Perm$. Lemma \ref{equivlem2} follows since every morphism in $\Sp (\Perm)$ is a morphism in
$\Theta^{n}_d (\dot{\mathcal{S}} (n,d))$ for some $n,d$ (\cite{mainpaper}, Theorem 5.3.1).

This completes the proof of Theorem \ref{equiv}.

\section{A Tabloid-Theoretic Description of the Generating Morphisms of $\dot{\mathcal{S}} (n,d)$}\label{Snd1}
By Proposition \ref{scwe} there are morphisms
 \begin{align*}
E^{(r)}_i 1_{\lambda} &: M^{\lambda} \rightarrow M^{\lambda+ r\alpha_i}  \\
F^{(r)}_i 1_{\lambda} &: M^{\lambda} \rightarrow M^{\lambda- r\alpha_i}
\end{align*}
that generate all $S_d$-equivariant morphisms between the permutation modules $M^{\lambda}$ for $\lambda \in \Lambda (n,d)$. It will be useful in Sections \ref{prm} and \ref{kronecker} to have an explicit description for these maps. We derive this now.

For $m \in \N$, define $\underline{m} := \{ 1 , \ldots , m \}$.
The basis elements $v= v_{i_1} \otimes \cdots \otimes v_{i_d} \in V^{\otimes d} := (\C^n)^{\otimes d}$ correspond precisely to dissections, $T_v$, of $\underline{d}$ into $n$ subsets,
\begin{align*}
T_v = \{ T^{1}_v , \ldots , T^{n}_v\}
\qquad
\text{where }
T^{j}_v = \{ k \in \underline{d} | i_k = j \}.
\end{align*}
If $v= v_{i_1} \otimes \cdots \otimes v_{i_d} \in V^{\otimes d}$ has weight $\lambda$, we call such a dissection a \textbf{$\lambda$-tabloid} (or \textbf{tabloid} if $\lambda$ is unspecified).
We depict such a tabloid by an $n \times 1$ array of cells, indexed vertically downward, whose $j$-th cell contains the elements of $T^{j}_v$. For example,
\begin{align*}
T_{v_1 \otimes v_1 \otimes v_1 \otimes v_2 \otimes v_4}
&=
\begin{tabular}{| c | }
\hline 
 1 , 2 , 3 \\
\hline
4 \\
\hline
 \\
\hline
5 \\
\hline
\end{tabular}
\end{align*}

Given a tabloid $T$, write $T^i$ for the set of elements in the $i$-th cell of $T$. 
Define $Y^{\lambda}$ to be the $\C$-span of $\lambda$-tabloids. 
Now, $Y^{\lambda}$ is a right $\C[S_d]$-module with the action $T_{v} \cdot g = T_{vg}$. More precisely,
\begin{align*}
(Tg)^{i} = g^{-1} (T^i)
\qquad
\text{for all tabloids }
T \in Y^{\lambda}.
\end{align*}

By definition $Y^{\lambda}$ is isomorphic, in $\Rep S_d$, to $M^{\lambda}$. 
More precisely, 
for $\lambda \in \Lambda (n,d)$, define
\[
T_{\lambda} := 
\begin{tabular}{| c | }
\hline 
 $1 , \ldots , \lambda_1$ \\
\hline
$\lambda_1 + 1 , \ldots , \lambda_1 + \lambda_2 $\\
\hline
$ \vdots$\\
\hline
$\lambda_1 + \cdots + \lambda_{n-1} +1 , \ldots , d$\\
\hline
\end{tabular}
\]
There is an isomorphism $\phi_{\lambda}:  Y^{\lambda} \rightarrow M^{\lambda}$ defined $T_{\lambda} \cdot g \mapsto S_{\lambda} g$. \emph{Henceforth identify $M^{\lambda}$ with $Y^{\lambda}$ via the isomorphism $\phi_{\lambda} $}.

\begin{prop}\label{tabact}
Let $R \subseteq \underline{d}$, $i \in \underline{n}$. For any tabloid $T$ define $c_{i,R} T$ to be the tabloid obtained from $T$ by moving every element in the set $R$ to the $i$-th cell. 

The maps 
$
E^{(r)}_i 1_{\lambda} : M^{\lambda} \rightarrow M^{\lambda+ r\alpha_i}  
$
and
$
F^{(r)}_i 1_{\lambda} : M^{\lambda} \rightarrow M^{\lambda- r\alpha_i}
$
are defined on tabloids by,
 \begin{align*}
E^{(r)}_i 1_{\lambda} &: T \mapsto \sum_{R \subseteq T^{i+1}, |R| = r} c_{i,R} T\\
F^{(r)}_i 1_{\lambda} &: T \mapsto \sum_{R \subseteq T^{i}, |R| = r} c_{i+1,R} T
\end{align*}
\end{prop}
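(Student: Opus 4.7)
The plan is to compute the $\gl_n$-action directly on the tabloid basis of $V^{\otimes d}$, using the canonical bijection $v_{i_1}\otimes\cdots\otimes v_{i_d} \longleftrightarrow T_v$ between the standard basis of $V^{\otimes d}$ and the set of all tabloids. Since the $\gl_n$-action commutes with the $S_d$-action, and since under the chosen identification $\phi_\lambda$ the module $M^\lambda$ is $Y^\lambda$ as a right $S_d$-module, any $S_d$-equivariant formula on tabloids will descend verbatim to the claimed description of $E_i^{(r)}1_\lambda$ and $F_i^{(r)}1_\lambda$.

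First I would unpack the $\gl_n$-action. The Chevalley generator $E_i = e_{i,i+1}$ acts on $\C^n$ by $E_i v_j = \delta_{j,i+1} v_i$, and extends to $V^{\otimes d}$ by the Leibniz rule. Translated to tabloids, this says exactly
\[
E_i \, T \;=\; \sum_{k \in T^{i+1}} c_{i,\{k\}}\, T,
\]
since each summand corresponds to changing one entry $v_{i+1}$ (at position $k$) to $v_i$, i.e.\ moving the label $k$ from cell $i{+}1$ to cell $i$.

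Next I would iterate. Applying $E_i$ a second time, note that $(c_{i,\{k_1\}}T)^{i+1} = T^{i+1} \setminus \{k_1\}$, so
\[
E_i^2 \, T \;=\; \sum_{\substack{k_1,k_2 \in T^{i+1} \\ k_1 \neq k_2}} c_{i,\{k_1,k_2\}} T \;=\; 2!\sum_{\substack{R \subseteq T^{i+1} \\ |R|=2}} c_{i,R}\,T,
\]
and by an immediate induction on $r$,
\[
E_i^{r} \, T \;=\; r! \sum_{\substack{R \subseteq T^{i+1} \\ |R|=r}} c_{i,R}\, T.
\]
Dividing by $r!$ gives the stated formula for $E_i^{(r)}1_\lambda$. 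The argument for $F_i^{(r)}1_\lambda$ is entirely symmetric, using $F_i v_j = \delta_{j,i}v_{i+1}$.

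Finally I would note that no further work is required to pass from $V_\lambda^{\otimes d}$ to $M^\lambda$: since $\phi_\lambda$ is an $S_d$-equivariant isomorphism and $E_i^{(r)}, F_i^{(r)}$ commute with the $S_d$-action, the tabloid formulas computed above are precisely the morphisms $M^\lambda \to M^{\lambda\pm r\alpha_i}$ identified in Proposition \ref{scwe}. There is no genuine obstacle here; the only point that deserves care is the combinatorial bookkeeping when iterating $E_i$, which produces the factor $r!$ that cancels with the divided power normalization.
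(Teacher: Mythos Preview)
Your proof is correct and follows essentially the same approach as the paper: compute the action of $E_i=e_{i,i+1}$ on a standard basis vector via the Leibniz rule, translate this to the tabloid language as $E_i T=\sum_{k\in T^{i+1}} c_{i,\{k\}}T$, and then iterate to obtain the divided-power formula. Your write-up is in fact more explicit than the paper's, which simply asserts the divided-power formula after establishing the $r=1$ case.
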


\begin{proof}
The left action
$
\gl_n \curvearrowright \bigotimes^d \C^n = \bigoplus_{\lambda \in \Lambda(n,d)} M^{\lambda}
$
is defined on tabloids by the rule $x \cdot T_v = T_{x \cdot v}$ (here we interpret an expression $T_{v+w}$ as $T_{v}+T_{w}$).
We first show that, for any tabloid $T$,
\begin{equation*}
e_{ij} T = \sum_{k \in T^j} c_{i,\{k\}} T
\end{equation*}
Indeed, given $v = v_{i_1} \otimes \cdots \otimes v_{i_d}$,
\begin{align*}
e_{ij} v &=  \sum_{k \in T^{j}_v} v_{i_1} \otimes \cdots \otimes v_{i_{k-1}} \otimes v_i \otimes v_{i_{k+1}} \otimes \cdots \otimes v_{i_d}
\\
\intertext{Hence, }
e_{ij}T_v &= T_{e_{ij} v} = \sum_{k \in T^{j}_v} c_{i,\{k\}} T_v
\end{align*}
as required. Moreover,
\begin{equation*}
e_{ij}^{(r)} T = \sum_{R \subseteq T^{j}, |R| = r} c_{i,R} T
\end{equation*}
The result follows.
\end{proof}

For example, 
\begin{align*}
F_1 1_{(2,1)} \cdot
\begin{tabular}{| c | }
\hline 
 1 , 2 \\
\hline
3 \\
\hline
\end{tabular}
&=
e_{21} \cdot T_{v_1 \otimes v_1 \otimes v_2}
=
T_{v_2 \otimes v_1 \otimes v_2} + T_{v_1 \otimes v_2 \otimes v_2}
\displaybreak[1]\\
&=
\begin{tabular}{| c | }
\hline 
2 \\
\hline
1,3 \\
\hline
\end{tabular}
+
\begin{tabular}{| c | }
\hline 
 1 \\
\hline
2,3 \\
\hline
\end{tabular}
=
c_{2, \{1\}}
\begin{tabular}{| c | }
\hline 
 1 , 2 \\
\hline
3 \\
\hline
\end{tabular}
+
c_{2,\{2\}}
\begin{tabular}{| c | }
\hline 
 1 , 2 \\
\hline
3 \\
\hline
\end{tabular}
\end{align*}

\begin{rem}
The maps $\nabla_{k,l}: M^{k,l} \rightarrow M^{k+l}$ and $\Delta_{k,l}:M^{k+l} \rightarrow M^{k,l}$ are defined on tabloids
\begin{align*}
\nabla_{k,l} \left( 
\begin{tabular}{| c | }
\hline 
 1 , \ldots , $k $\\
\hline
$k+1$ , \ldots , $k+l$ \\
\hline
\end{tabular}
\cdot
g
\right)
&=
\begin{tabular}{| c | }
\hline 
 1 , \ldots ,$ k+l$ \\
\hline
\end{tabular}
\displaybreak[1]\\
\Delta_{k,l}
\left( 
\begin{tabular}{| c | }
\hline 
 1 , \ldots ,$ k+l$ \\
\hline
\end{tabular}
\right)
&=
\frac{1}{k!l!}
\sum_{g \in S_{k+l}}
\begin{tabular}{| c | }
\hline 
 1 , \ldots , $k $\\
\hline
$k+1$ , \ldots , $k+l$ \\
\hline
\end{tabular}
\cdot
g
\end{align*}
\end{rem}

The circle product on permutation modules can be thought of as `vertical juxtaposition of tabloids'. More precisely, 
fix $\lambda, \lambda' \in \Lambda(n,d)$ and $\mu, \mu' \in \Lambda(m,d')$. Given a $\lambda$-tabloid $T$ and $\mu$-tabloid $T'$ define $T \circ T'$ to be the $(\lambda,\mu)$-tabloid defined:
\begin{itemize}
\item
For $1 \leq i \leq n$, $(T \circ T')^i = T^i$
\item
For $n < i \leq n+m$, $(T \circ T')^i = \{ k+d | k \in T'^{i-n} \}$
\end{itemize}
For example,
\begin{tabular}{| c | }
\hline 
2 , 3 \\
\hline
1 , 4 \\
\hline
 \\
\hline
\end{tabular}
$\circ$
\begin{tabular}{| c | }
\hline 
 1 , 2 \\
\hline
3 \\
\hline
\end{tabular}
$=$
\begin{tabular}{| c | }
\hline 
 2 , 3 \\
\hline
1 , 4 \\
\hline
 \\
\hline
5 , 6 \\
\hline
7 \\
\hline
\end{tabular}.

If $f: M^{\lambda} \rightarrow M^{\lambda'}$ and $g: M^{\mu} \rightarrow M^{\mu'}$ are $S_d$ and $S_d'$ equivariant respectively, then $f \circ g : M^{(\lambda, \mu)} \rightarrow M^{(\lambda', \mu')}$ is the $S_{d+d'}$-equivariant map defined on the generator $T_{(\lambda,\mu)} = T_{\lambda} \circ T_{\mu}$:
\begin{itemize}
\item
For $1 \leq i \leq n$, 
\[
((f \circ g)(T_{\lambda, \mu}))^i = (f(T_{\lambda}))^i
\]
\item
For $n < i \leq n+m$, 
\[
((f \circ g)(T_{\lambda , \mu}))^i = \{ k+ d | k \in (g(T_{\mu}))^{i-n} \}
\]
\end{itemize}

For example,
$1_{M^{(1,1)}} \circ F_{1} 1_{(2,1)} : M^{(1,1,2,1)} \rightarrow M^{(1,1,1,2)}$ is defined, for $g \in S_d$,
\[
1_{M^{(1,1)}} \circ F_{1} 1_{(2,1)} : 
\begin{tabular}{| c | }
\hline 
1 \\
\hline
2 \\
\hline
3,4 \\
\hline
5 \\
\hline
\end{tabular}
\cdot g
\mapsto
\left(
\begin{tabular}{| c | }
\hline 
1 \\
\hline
2 \\
\hline
4 \\
\hline
3,5 \\
\hline
\end{tabular}
+
\begin{tabular}{| c | }
\hline 
1 \\
\hline
2 \\
\hline
3 \\
\hline
4,5 \\
\hline
\end{tabular}
\right)
\cdot g
\]
Notice from this example that $1_{M^{(1,1)}} \circ F_{1} 1_{(2,1)} = F_3 1_{(1, 1, 2,1)}$ as expected by our identification of the circle product with horizontal juxtaposition of ladder diagrams.

\section{Symmetric Monoidal Structure on $\Perm$}\label{prm}

For $\lambda \in \Z^n$, $d \in \Z$, write $\lambda \vdash d$ if $\lambda$ is a partition of $d$. In this section we describe the braiding maps for the $\circ$-product in $\Perm$ (Theorem \ref{permsym}), and
derive a diagrammatic presentation of the full (skeletal) subcategory, $\Prm$, of $\Perm$ whose objects are the permutation modules, $M^{\lambda}$, for $\lambda \vdash d$, $d \in \N$ (Corollary \ref{Prmweb}).

\begin{thm}\label{permsym}
The category $\Perm$ is symmetric monoidal with braiding isomorphisms defined diagrammatically,
\begin{align*}
\tikz[baseline=40]{
\draw[mid>] (0,0) 
to [out=up, in=down] (2,3);
\draw[mid>] (2,0) 
to [out=up, in=down] (0,3);
\node[left] at (0,0) {$k$};
\node[right] at (2,0) {$l$};
\node[left] at (0,3) {$l$};
\node[right] at (2,3) {$k$};
}
&:=
\sum_{a,b \geq 0 , a-b=k-l} (-1)^{k-a} 
\begin{tikzpicture}[baseline=40]
\laddercoordinates{1}{2}
\node[left] at (l00) {$k$};
\node[right] at (l10) {$l$};
\ladderEn{0}{0}{$k{-}a$}{$l{+}a$}{a}
\ladderFn{0}{1}{$l$}{$k$}{b}
\end{tikzpicture}
\\
&=
\sum_{a,b \geq 0 , a-b=l-k} (-1)^{l-a} 
\begin{tikzpicture}[baseline=40]
\laddercoordinates{1}{2}
\node[left] at (l00) {$k$};
\node[right] at (l10) {$l$};
\ladderFn{0}{0}{$k{+}a$}{$l{-}a$}{a}
\ladderEn{0}{1}{$l$}{$k$}{b}
\end{tikzpicture}
\end{align*}
\end{thm}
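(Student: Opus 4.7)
The strategy is to identify the diagrammatic formula with the natural symmetric braiding on $\bigoplus_{d=0}^{\infty} \Rep S_d$ (where the tensor product is induction), which restricts to $\Perm$. It is a classical fact that $\bigoplus_{d} \Rep S_d$ with induction product is symmetric monoidal, with braiding $M \circ N \to N \circ M$ induced by conjugation by the block-swap permutation $w_{d,d'} \in S_{d+d'}$ (the element sending $i \mapsto i+d'$ for $i \leq d$ and $i \mapsto i-d$ for $i > d$). On permutation modules this gives $\sigma_{k,l}: M^{k,l} \to M^{l,k}$, $S_{k,l} g \mapsto S_{l,k} w_{k,l} g$. Translated to the tabloid picture of Section \ref{Snd1}, $\sigma_{k,l}$ sends the standard tabloid $T_{(k,l)}$ to the tabloid whose first cell is $\{k+1, \ldots, k+l\}$ and whose second cell is $\{1, \ldots, k\}$, i.e.\ it simply swaps the contents of the two cells, and then acts $S_{k+l}$-equivariantly. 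Since this $\sigma_{k,l}$ lies in $\Perm$ and the axioms of symmetric monoidality (naturality, hexagons, $\sigma_{l,k} \sigma_{k,l} = 1$) are inherited from $\bigoplus_d \Rep S_d$, it suffices to prove that the right-hand side of each displayed formula, when interpreted via $\Psi^{2}_{k+l}$, coincides with $\sigma_{k,l}$.

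Since the right-hand side is a $\C$-linear combination of $\gl_2$-morphisms, it is $S_{k+l}$-equivariant, so it is enough to evaluate it on the single generator $T_{(k,l)}$. Using Proposition \ref{tabact}, expand each summand: for the first formula, $F_1^{(a)}$ sums over subsets $R \subseteq \{1,\ldots,k\}$ of size $a$ moving them from cell 1 to cell 2, and then $E_1^{(b)}$ sums over subsets $S$ of the resulting first cell of size $b$ moving them back. Fix a target tabloid $T'$ of weight $(l,k)$ and let $\alpha = |T'^{1} \cap \{1,\ldots,k\}|$; a routine bookkeeping shows that the pairs $(R,S)$ producing $T'$ are parametrized by one free subset of size $\gamma$ inside a set of size $\alpha$, and the sign $(-1)^{k-a}$ becomes $(-1)^{k-\delta}(-1)^{\gamma}$ where $\delta = k - \alpha$. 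Summing over $\gamma$ yields $(-1)^{k-\delta}(1-1)^{\alpha}$, which vanishes unless $\alpha = 0$. The unique surviving $T'$ has first cell $\{k+1,\ldots,k+l\}$ and second cell $\{1,\ldots,k\}$ with coefficient $1$, matching $\sigma_{k,l}(T_{(k,l)})$.

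For the equality of the two formulas, apply the identical binomial cancellation argument to the second sum (with $R \subseteq \{k+1,\ldots,k+l\}$ and $\beta = |T'^{1} \cap \{k+1,\ldots,k+l\}|$): the sum collapses to $(1-1)^{l-\beta}$ times a sign, vanishing unless $\beta = l$, and giving the same target tabloid with coefficient $1$. Alternatively, this identity can be derived purely diagrammatically by repeated application of the commutation relation \eqref{eq2:commute} (i.e.\ \eqref{eq:EF=FE}) to migrate every $E$-rung above every $F$-rung, followed by collecting terms via the binomial identity $\sum_t (-1)^t \binom{N}{t}\binom{M-t}{r-t}$ equalling a single term.

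The main obstacle is the combinatorial bookkeeping in step two: one must carefully track which elements of $\{1,\ldots,k+l\}$ are moved up, back down, or untouched, parametrize the summation by the resulting tabloid together with a ``round-trip'' subset $C$, and verify that the alternating sum over $|C|$ produces the characteristic binomial cancellation $(1-1)^{\alpha} = \delta_{\alpha,0}$. Once this is checked, the symmetry $\sigma_{l,k}\sigma_{k,l} = \id$ and the hexagon axioms are automatic from the inclusion $\Perm \hookrightarrow \bigoplus_d \Rep S_d$, so no further diagrammatic calculation is needed for monoidal coherence.
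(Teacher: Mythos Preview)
Your proof is correct and follows essentially the same route as the paper. Both arguments define the ``obvious'' braiding (swap the two cells of a tabloid, equivalently conjugate by the block permutation), observe that this gives a symmetric monoidal structure on $\Perm$, and then verify that the displayed alternating sum equals this cell-swap map by a binomial cancellation on tabloids: parametrize the terms contributing to a fixed target tabloid by a ``round-trip'' subset, and observe that the alternating sum over its size is $(1-1)^{\alpha}$, which kills everything except the fully-swapped tabloid. One small slip: after applying $F_1^{(a)}$ the set from which $E_1^{(b)}$ selects is the new \emph{second} cell $T^2 \cup R$, not the first cell; the paper writes this as $\sum_{R\subseteq T^1}\sum_{S\subseteq T^2\cup R} c_{1,S}c_{2,R}T$. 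This does not affect your argument.
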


\begin{rem}
The sums in Theorem \ref{permsym} are finite since we interpret strands labelled by non-positive integers as zero morphisms.
\end{rem}

\begin{proof}
Let $B_{k,l}: M^{k,l} \rightarrow M^{l,k}$ be the isomorphism in $\Perm$ defined on $(k,l)$-tabloids by interchanging elements in the 1st and 2nd cells. That is,
\[
B_{k,l} (T) = c_{2,T^1} c_{1,T^2} T
\]
It is clear that the $B_{k,l}$ define braidings for $\Perm$ that give $\Perm$ the structure of a symmetric monoidal category.
Let
\[
B^{k,l} := \sum_{a,b \geq 0, a-b=k-l} (-1)^{k-a} E_{1}^{(b)} F_{1}^{(a)} 1_{(k,l)} : M^{k,l} \rightarrow M^{l,k}
\]
We first show that $B_{k,l} = B^{k,l}$.

By Proposition \ref{tabact}, for any $(k,l)$-tabloid $T$,
\[
 E_{1}^{(b)} F_{1}^{(a)} 1_{(k,l)} (T) =  \sum_{R \subseteq T^{1}, |R| = a} \sum_{S \subseteq T^{2} \cup R, |S| = b} c_{1,S} c_{2,R} T
\]
Fix $R \subsetneq T^1$ of size $a$, and $S \subsetneq T^{2}$ of size $b:= a - k + l$. Then 
\begin{align*}
c_{1,S} c_{2,R} T= c_{1, S \cup Q} c_{2, R \cup Q} T
\qquad
\text{for all $Q \subseteq T^1 \backslash R$.}
\end{align*}
In particular, $c_{1,S} c_{2,R} T$ appears $\binom{k - a }{m} = \binom{k-a}{k-(a+m)}$ times as a summand of $E_{1}^{(b+m)} F_{1}^{(a+m)} (T)$ ; once for each $Q \subseteq T^1 \backslash R$ such that $|Q|=m$.  Hence the number of times  $c_{1,S} c_{2,R}T$ appears as a summand of $B^{k,l} (T)$  is 
\[
\sum_{\text{$k-(a+m)$ even}} \binom{k- a}{k-(a+m)} - \sum_{\text{$k-(a+m)$ odd}} \binom{k - a}{k-(a+m)} =0
\]
Furthermore, $c_{1, T^{2}} c_{2, T^{1}} T$ appears exactly once in $B^{k,l}(T)$ as a summand of $E_{1}^{(l)} F_{1}^{(k)} T$. Hence,
\[
B^{k,l} (T) = c_{1, T^{2}} c_{2, T^{1}} T = B_{k,l} (T)
\]
as required.

By interchanging the roles of $T^1$ and $T^2$ in the above proof we also get that
\[
B_{k,l} = \sum_{a,b \geq 0, a-b=l-k} (-1)^{l-a} F_{1}^{(b)} E_{1}^{(a)} 1_{(k,l)} : M^{k,l} \rightarrow M^{l,k}
\]
This completes the proof of Theorem \ref{permsym}.
\end{proof}

We note the following feature of the diagram calculus for $\Perm$.
\begin{prop}
The following relations hold in $\Perm$:
\begin{align}
\tikz[baseline=40, yscale=0.5]{
\draw[mid>] (0.5,0) 
to [out=up, in=225] (1,2);
\draw[mid>] (1.5,0) 
to [out=up, in=-45] (1,2);
\draw[mid>] (1,2) 
to [out=up, in=down] (1,4);
\draw[mid>] (2.5,0) 
to [out=up, in=down] (0,2);
\draw[mid>] (0,2) 
to [out=up, in=down] (0,4);
\node[left] at (0.5,0) {$k$};
\node[right] at (1.5,0) {$l$};
\node[right] at (1,4) {$k{+}l$};
\node[right] at (2.5,0) {$m$};
\node[left] at (0, 4) {$m$}
}
&=
\tikz[baseline=40, yscale=0.5]{
\draw[mid>] (0.5,0) 
to [out=up, in=225] (1,2);
\draw[mid>] (1.5,0) 
to [out=up, in=-45] (1,2);
\draw[mid>] (1,2) 
to [out=up, in=down] (1,4);
\draw[mid>] (2.5,0) 
to [out=up, in=down] (2.5,1);
\draw[mid>] (2.5,1) 
to [out=up, in=down] (0,4);
\node[left] at (0.5,0) {$k$};
\node[right] at (1.5,0) {$l$};
\node[right] at (1,4) {$k{+}l$};
\node[right] at (2.5,0) {$m$};
\node[left] at (0, 4) {$m$}
}
\label{cross1}
\displaybreak[1]\\
\label{cross2}
\tikz[baseline=40, yscale=0.5]{
\draw[mid<] (0.5,4) 
to [out=up, in=135] (1,2);
\draw[mid<] (1.5,4) 
to [out=up, in=45] (1,2);
\draw[mid<] (1,2) 
to [out=up, in=down] (1,0);
\draw[mid>] (2.5,0) 
to [out=up, in=down] (0,3);
\draw[mid>] (0,3) 
to [out=up, in=down] (0,4);
\node[right] at (0.5,4) {$k$};
\node[right] at (1.5,4) {$l$};
\node[right] at (1,0) {$k{+}l$};
\node[right] at (2.5,0) {$m$};
\node[left] at (0, 4) {$m$}
}
&=
\tikz[baseline=40, yscale=0.5]{
\draw[mid<] (0.5,4) 
to [out=up, in=135] (1,2);
\draw[mid<] (1.5,4) 
to [out=up, in=45] (1,2);
\draw[mid<] (1,2) 
to [out=up, in=down] (1,0);
\draw[mid>] (2.5,0) 
to [out=up, in=down] (2.5,1);
\draw[mid>] (2.5,1) 
to [out=up, in=down] (0,4);
\node[right] at (0.5,4) {$k$};
\node[right] at (1.5,4) {$l$};
\node[right] at (1,0) {$k{+}l$};
\node[right] at (2.5,0) {$m$};
\node[left] at (0, 4) {$m$}
}
\end{align}
\end{prop}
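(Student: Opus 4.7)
The plan is to recognize both (\ref{cross1}) and (\ref{cross2}) as instances of the naturality of the braiding in the symmetric monoidal category $\Perm$, applied to the generating morphisms $\nabla_{k,l}$ and $\Delta_{k,l}$. Theorem \ref{permsym} establishes that $\Perm$ is symmetric monoidal with the diagrammatic braiding $B$, and naturality of $B$ with respect to any morphism $f: X \to Y$ is part of what ``symmetric monoidal'' means: for every object $Z$ one has
\[
B_{Y, Z} \circ (f \circ \id_Z) = (\id_Z \circ f) \circ B_{X, Z}.
\]
Taking $f = \nabla_{k,l}: M^{k,l} \to M^{k+l}$ with $Z = M^m$ yields (\ref{cross1}); taking $f = \Delta_{k,l}: M^{k+l} \to M^{k,l}$ with $Z = M^m$ yields (\ref{cross2}).

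For a hands-on verification (independent of citing naturality in the abstract), I would trace both sides on tabloids using the description $B_{k,l}(T) = c_{2, T^1} c_{1, T^2} T$ from the proof of Theorem \ref{permsym}, which simply interchanges the contents of the two cells of a $(k,l)$-tabloid. Starting from a $(k, l, m)$-tabloid $T$ with cells $T^1, T^2, T^3$, the left-hand side of (\ref{cross1}) first fuses $T^1, T^2$ into a single cell of contents $T^1 \cup T^2$ and then swaps it past $T^3$, producing the $(m, k+l)$-tabloid with cells $T^3, T^1 \cup T^2$. The right-hand side first swaps $T^3$ to the front to produce the $(m, k, l)$-tabloid $(T^3, T^1, T^2)$ and then fuses the last two cells, producing the same $(m, k+l)$-tabloid. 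The two operations commute because the braiding only permutes the positions of cells, while $\nabla$ merely merges the contents of consecutive cells.

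Equation (\ref{cross2}) is handled in the same spirit, tracing a $(k{+}l, m)$-tabloid through both composites. The only wrinkle is that $\Delta_{k,l}$ involves the sum $\frac{1}{k!\,l!}\sum_{g \in S_{k+l}}(\ldots) \cdot g$ over all ways of splitting a $(k{+}l)$-cell into two cells of sizes $k$ and $l$; but since the braiding on the other factor merely relocates the $m$-cell and does not touch the summation, splitting then braiding agrees with braiding then splitting term-by-term. I expect no real obstacle here: once one has Theorem \ref{permsym} and the tabloid description of $\nabla_{k,l}$, $\Delta_{k,l}$, and $B_{k,l}$, each side of each equation reduces to the same explicit permutation of cell contents, and the verification is essentially bookkeeping. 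Notably, none of the argument requires unpacking the $E^{(a)} F^{(b)}$ expansion of $B_{k,l}$ from Theorem \ref{permsym}, which keeps the proof much cleaner than any direct manipulation of ladder diagrams.
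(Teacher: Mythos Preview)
Your proposal is correct, and your second, hands-on verification on tabloids is exactly the paper's proof: the paper simply states that both sides of (\ref{cross1}) send the standard $(k,l,m)$-tabloid (times $g$) to the $(m,k{+}l)$-tabloid with cells $\{k{+}l{+}1,\dots,k{+}l{+}m\}$ and $\{1,\dots,k{+}l\}$, and leaves (\ref{cross2}) to the reader.

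Your first argument, invoking naturality of the braiding from Theorem~\ref{permsym}, is a legitimate shortcut the paper does not take explicitly. It works because the proof of Theorem~\ref{permsym} identifies the diagrammatic crossing with the cell-swapping map $B_{k,l}$ and asserts (correctly, and independently of this proposition) that these $B_{k,l}$ form a symmetric monoidal braiding; naturality with respect to $\nabla_{k,l}$ and $\Delta_{k,l}$ is then automatic. The paper instead remarks afterward that diagrammatic proofs exist ``with a little work'' but does not pursue them, opting for the direct tabloid check. Your packaging via naturality is cleaner conceptually; the paper's approach is more self-contained in that it does not lean on the unproved-but-clear assertion in Theorem~\ref{permsym} that the $B_{k,l}$ satisfy all the braiding axioms.
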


\begin{proof}
Both equations (\ref{cross1}) and (\ref{cross2}) can be checked by evaluating each side of the equation of tabloids. For example, both sides of equation (\ref{cross1}) act on tabloids in the following way:
\[
\begin{tabular}{| c | }
\hline 
 1 , \ldots , $k $\\
\hline
$k{+}1$ , \ldots , $k{+}l$ \\
\hline
$k{+}l{+}1$ , \ldots, $k{+}l{+}m$ \\
\hline
\end{tabular}
\cdot
g
\mapsto
\begin{tabular}{| c | }
\hline 
 $k{+}l{+}1$ , \ldots , $k{+}l{+}m $\\
\hline
$1$ , \ldots , $k{+}l$ \\
\hline
\end{tabular}
\cdot
g
\]
for all $g \in S_{k+l+m}$.
\end{proof}

\begin{rem}
With a little work one may also find diagrammatic proofs of (\ref{cross1}) and (\ref{cross2}). We do not do this here.
\end{rem}

\subsection{A diagrammatic presentation of $\Prm$}
Recall the symmetric group $S_n$ acts on $\Z^n$ by permuting entries. 
Given $\mu \vDash d$ and $\lambda \vdash d$, say that $\lambda$ \textbf{dominates} $\mu$ (write $\mu \prec \lambda$) if $\mu$ is in the $S_n$ orbit of $\lambda$. 

\begin{cor}\label{Prmweb}
The category $\Prm$ is isomorphic as a $\C$-linear monoidal category to the category with
\begin{itemize}
\item
Objects: Partitions $\lambda \vdash d$ for some $d \in \N$.
\item
Morphisms: Morphisms from $\lambda$ to $\mu$ are $\C$-linear combinations of $\Perm$-spiders connecting $\lambda'$ to $\mu'$, where $\lambda' \prec \lambda$ and $\mu' \prec \mu$. These morphisms satisfy relations (\ref{eq2:IH}), (\ref{eq2:HI}), (\ref{eq2:bigon1}), (\ref{eq2:commute}), as well as the relation
\begin{align}
\tikz[baseline=20,xscale=0.75, yscale=1.5]{
\node[below] at (l00) {$k$};
\node[below] at (l10) {$l$};
\ladderI{0}{0};
\ladderI{1}{0};
}
=
\tikz[baseline=20,scale=0.75]{
\draw[mid>] (0,0) 
to [out=up, in=down] (2,3);
\draw[mid>] (2,0) 
to [out=up, in=down] (0,3);
\node[left] at (0,0) {$k$};
\node[right] at (2,0) {$l$};
\node[left] at (0,3) {$l$};
\node[right] at (2,3) {$k$};
}
=
\tikz[baseline=20,xscale=0.75, yscale=1.5]{
\node[below] at (l00) {$l$};
\node[below] at (l10) {$k$};
\ladderI{0}{0};
\ladderI{1}{0};
}
\qquad
\text{if $k \neq l$.}
\end{align}
The composition of morphisms $f: \lambda \rightarrow \mu$ and $g: \mu \rightarrow \nu$ is given by vertical juxtaposition of a diagram, 
$\tilde{f}$, depicting $f$ above a diagram, $\tilde{g}$, depicting $g$, in which the sequence along the top of $\tilde{g}$ is the same as the sequence along the bottom of $\tilde{f}$.
The monoidal product is given by horizontal juxtaposition.
\end{itemize}
\end{cor}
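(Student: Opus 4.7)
The plan is to construct a $\C$-linear monoidal functor $F \colon \mathcal{C} \to \Prm$ from the presented category $\mathcal{C}$ in the statement to $\Prm$, and show that $F$ is an isomorphism. On objects, send the partition $\lambda$ to $M^{\lambda}$; since $\Prm$ is full in $\Perm$ with objects exactly the $M^{\lambda}$ for $\lambda \vdash d$, $F$ is automatically essentially surjective (in fact, bijective on objects). For morphisms, use Theorem~\ref{equiv} to interpret each $\Perm$-spider $D \colon \lambda' \to \mu'$ (with $\lambda' \prec \lambda$ and $\mu' \prec \mu$) as a morphism $\Gamma(D) \colon M^{\lambda'} \to M^{\mu'}$ in $\Perm$, and then conjugate by sorting braidings: fix, once and for all, an isomorphism $\sigma_{\lambda'} \colon M^{\lambda} \to M^{\lambda'}$ obtained by composing elementary braidings as in Theorem~\ref{permsym}; by the coherence theorem for symmetric monoidal categories applied to $\Perm$, this is independent of the chosen reduced expression for the permutation from $\lambda$ to $\lambda'$. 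Set $F(D) := \sigma_{\mu'}^{-1} \circ \Gamma(D) \circ \sigma_{\lambda'}$ and extend $\C$-linearly.

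To show $F$ is well defined, note that relations (\ref{eq2:IH}), (\ref{eq2:HI}), (\ref{eq2:bigon1}), (\ref{eq2:commute}) already hold in $\Perm$ by Theorem~\ref{equiv}, hence in $\Prm$ after applying $F$. The new relation amounts to the statement that the braiding $M^{(k,l)} \to M^{(l,k)}$, pre- and post-composed with the sorting braidings $M^{\lambda} \to M^{(k,l)}$ and $M^{(l,k)} \to M^{\lambda}$, collapses to $\id_{M^{\lambda}}$ — which is immediate from coherence. Fullness of $F$ follows from Theorem~\ref{equiv}: every morphism $M^{\lambda} \to M^{\mu}$ in $\Prm$ is the image under $\Gamma$ of a linear combination of $\Perm$-spiders $\lambda \to \mu$, and these are themselves morphisms $\lambda \to \mu$ in $\mathcal{C}$ (with $\lambda' = \lambda$, $\mu' = \mu$). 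For faithfulness, given a morphism $\sum_i D_i$ in $\mathcal{C}$ with $D_i \colon \lambda'_i \to \mu'_i$, iteratively apply the new relation to ``sort'' each bottom and top into the partition order, producing an equivalent representative $\sum_i \widetilde{D}_i$ with each $\widetilde{D}_i$ a spider $\lambda \to \mu$; if $F(\sum_i D_i) = 0$ then this sum of sorted spiders, interpreted in $\Sp(\Perm)$, maps to zero under $\Gamma$ and hence vanishes by Theorem~\ref{equiv}. Finally, verifying monoidality of $F$ is straightforward from the definitions since the $\circ$-product in $\Prm$ is horizontal juxtaposition of diagrams, which is preserved on the nose.

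The main subtlety lies in the faithfulness step, specifically in the ambiguity of sorting a composition $\lambda'$ with repeated entries. When $\lambda'$ has two adjacent equal entries, the new relation does not let us swap them; however, any two ways of sorting $\lambda'$ into $\lambda$ differ only by braidings of equal-labelled strands, and these braidings are elements of $\Sp(\Perm)$ itself. Thus the apparent ambiguity is absorbed into the morphism spaces of $\Sp(\Perm)$ and the equivalence of different sortings reduces to well-definedness of the sorting braidings $\sigma_{\lambda'}$ in $\Perm$, which was already handled by coherence. The essential content of the corollary is therefore that the skeleton of $\Perm$ obtained by identifying $M^{\lambda'}$ with $M^{\lambda}$ for $\lambda' \prec \lambda$ via braidings admits a presentation in which the added relation $\id = \text{crossing}$ (for $k \neq l$) records precisely this identification.
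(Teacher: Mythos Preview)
Your proof is correct and follows essentially the same approach as the paper. Both arguments hinge on the fact that $\Prm$ is the skeleton of $\Perm$, and both pass between a composition $\lambda'$ and its dominating partition $\lambda$ via a canonical ``sorting'' isomorphism built from the braidings $B_{k,l}$ with $k\neq l$ (the paper calls these $\varphi_{\lambda'}$, you call them $\sigma_{\lambda'}$); the added relation in the presentation is precisely what identifies these braidings with identities, and Theorem~\ref{equiv} supplies the rest. The paper is terser---it simply asserts the uniqueness of $\varphi_{\lambda'}$ and then says the skeleton description ``simplifies'' via Theorem~\ref{equiv}---whereas you spell out well-definedness, fullness, and faithfulness separately, and you invoke coherence for symmetric monoidal categories to justify the uniqueness that the paper leaves as ``not hard to check.'' Your discussion of the repeated-entry subtlety is slightly more elaborate than necessary (once one restricts to braidings with $k\neq l$, the sorting word is in fact unique up to the braid relations, so no ambiguity arises), but it is not wrong.
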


\begin{proof}
Clearly two permutation modules $M^{\lambda}$ and $M^{\lambda'}$ are isomorphic in $\Perm$ if and only if $\lambda$ and $\lambda'$ are dominated by the same partition. Hence $\Prm$ is the skeleton of $\Perm$.
It is not hard to check that, given $\lambda' \prec \lambda$, there is a unique isomorphism $\varphi_{\lambda'}: M^{\lambda'} \rightarrow M^{\lambda}$, in $\Perm$, made up by using composition and the $\circ$-product on the braiding isomorphisms $B_{k,l}: M^{k,l} \rightarrow M^{l,k}$ where $k \neq l$.
As $\Prm$ is the skeleton of $\Perm$, $\Prm$ is isomorphic to the category with
\begin{itemize}
\item
Objects: Isomorphism classes of objects in $\Perm$. Given a partition $\lambda \vdash d$, let $[\lambda]$ denote the isomorphism class containing $M^{\lambda}_{d}$.
\item
Morphisms: Morphisms $[\lambda] \rightarrow [\mu]$ are equivalence classes of elements in the disjoint union
\[
\coprod_{\lambda' \prec \lambda , \mu' \prec \mu} \Hom_{\Perm} (M^{\lambda'}, M^{\mu'})
\]
where a morphism $f: M^{\lambda'} \rightarrow M^{\mu'}$ is equivalent to 
$\varphi_{\mu'} f \varphi_{\lambda'}^{-1}: M^{\lambda} \rightarrow M^{\mu}$ for all $\lambda' \prec \lambda$, $\mu' \prec \mu$.
\end{itemize}
By Theorem \ref{equiv} this simplifies to the diagrammatic presentation of $\Prm$ given in Corollary \ref{Prmweb}.
\end{proof}

We next prove a useful feature of the diagrammatic calculus for $\Prm$. 
\begin{prop}
The following relations hold in $\Prm$:
\begin{align}
\fuse{k}{l}{k+l}
=
\fuse{l}{k}{k+l}
\label{vertexreflection}
\displaybreak[1]
\\
\fork{k}{l}{k+l}
=
\fork{l}{k}{k+l}
\label{vertexreflection2}
\end{align}
\end{prop}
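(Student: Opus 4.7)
The plan is to reduce both equations to tabloid-level identities in $\Perm$ and then promote them to $\Prm$ using the new skeleton relation (crossing = pair of parallel strands) supplied by Corollary \ref{Prmweb}. The key observation is that the braiding $B_{k,l}: M^{k,l} \rightarrow M^{l,k}$ of Theorem \ref{permsym} is precisely the canonical isomorphism used to identify $M^{k,l}$ with $M^{l,k}$ in the skeleton, so in $\Prm$ the crossing \emph{is} the identity on the isomorphism class $[\{k,l\}]$.

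For the merge relation (\ref{vertexreflection}), I would first verify in $\Perm$ that $\nabla_{l,k} \circ B_{k,l} = \nabla_{k,l}$ as morphisms $M^{k,l} \rightarrow M^{k+l}$. This is immediate from the tabloid descriptions in Section \ref{Snd1}: $\nabla_{k,l}$ sends every $(k,l)$-tabloid to the unique one-cell $(k+l)$-tabloid, and $B_{k,l}$ swaps the two cells of a tabloid, so both sides send every $(k,l)$-tabloid to the same target tabloid. Pictorially, this says
\[
\fuse{k}{l}{k+l} \;=\; \tikz[baseline=0, xscale=0.5, yscale=0.35]{
\draw[mid>] (0,-2) to[out=up,in=down] (1,0) to[out=up,in=-135] (0.5,1);
\draw[mid>] (1,-2) to[out=up,in=down] (0,0) to[out=up,in=-45] (0.5,1);
\draw[mid>] (0.5,1) -- (0.5,2);
\node[left] at (0,-2) {$k$};
\node[right] at (1,-2) {$l$};
\node[above] at (0.5,2) {$k{+}l$};
}
\]
in $\Perm$, where the bottom crossing is $B_{k,l}$ and the vertex on top is $\nabla_{l,k}$. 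Applying the skeleton relation of Corollary \ref{Prmweb} to this crossing replaces it with two parallel strands labeled $l,k$, which by planar isotopy slide into the trivalent vertex to yield $\fuse{l}{k}{k+l}$. Hence $\fuse{k}{l}{k+l} = \fuse{l}{k}{k+l}$ in $\Prm$, proving (\ref{vertexreflection}).

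For the split relation (\ref{vertexreflection2}), I would dualize the argument: verify in $\Perm$ that $B_{l,k} \circ \Delta_{k,l} = \Delta_{l,k}$. The tabloid formula for $\Delta_{k,l}$ from the remark at the end of Section \ref{Snd1} expresses it as $\frac{1}{k!\,l!}\sum_{g \in S_{k+l}} T_{(k,l)} \cdot g$ applied to the unique single-cell $(k+l)$-tabloid; swapping the two cells via $B_{l,k}$ simply reindexes this sum into the analogous sum of $(l,k)$-tabloids, which is by definition $\Delta_{l,k}$. Placing the crossing above the fork vertex and invoking the skeleton relation (this time on the two strands emanating from the top of the vertex) then yields (\ref{vertexreflection2}). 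The case $k=l$ is trivial in both equations since the two sides are identical diagrams.

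I do not anticipate substantive obstacles; the only care required is bookkeeping---which side of the trivalent vertex the braiding must sit on (below for the merge, above for the split), and verifying that the canonical skeleton isomorphism between $M^{k,l}$ and $M^{l,k}$ is indeed $B_{k,l}$ rather than some other iso. The latter is clear since $B_{k,l}$ is, up to scalar, the \emph{unique} nonzero $S_{k+l}$-equivariant map between these permutation modules when $k \neq l$ (both are isotypic for the partition $\{k,l\}$ with the same multiplicities), and the symmetric monoidal structure of Theorem \ref{permsym} is precisely the one descending to $\Prm$.
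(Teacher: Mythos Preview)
Your proof is correct and follows essentially the same route as the paper: verify $\nabla_{l,k}\circ B_{k,l}=\nabla_{k,l}$ (and dually for $\Delta$) in $\Perm$ by a tabloid computation, then invoke the skeleton relation of Corollary~\ref{Prmweb} to erase the crossing. One small correction to your closing remark: the map $B_{k,l}$ is \emph{not} the unique $S_{k+l}$-equivariant map $M^{k,l}\to M^{l,k}$ up to scalar (this Hom space typically has dimension $\min(k,l)+1$), but no such uniqueness is needed---the crossing in Corollary~\ref{Prmweb} is \emph{defined} via Theorem~\ref{permsym} to be $B_{k,l}$, so the identification is by construction rather than by uniqueness.
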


\begin{proof}
To show equation (\ref{vertexreflection}) it suffices to show that if $k \neq l$ then
\[
\tikz[baseline=20,yscale=0.5]{
\draw[mid>] (0.5,0) 
to [out=up, in=245] (1,3);
\draw[mid>] (1.5,0) 
to [out=up, in=-65] (1,3);
\draw[mid>] (1,3) 
to [out=up, in=down] (1,4);
\node[left] at (0.5,0) {$k$};
\node[right] at (1.5,0) {$l$};
\node[left] at (1,4) {$k{+}l$};
}
=
\tikz[baseline=20,yscale=0.5]{
\draw[mid>] (0.5,0) 
to [out=up, in=down] (1.5,2);
\draw[mid>] (1.5,2) 
to [out=up, in=-45] (1,3);
\draw[mid>] (1.5,0) 
to [out=up, in=down] (0.5,2);
\draw[mid>] (0.5,2) 
to [out=up, in=225] (1,3);
\draw[mid>] (1,3) 
to [out=up, in=down] (1,4);
\node[left] at (0.5,0) {$k$};
\node[right] at (1.5,0) {$l$};
\node[left] at (1,4) {$k{+}l$};
\node[right] at (1.5,2) {$k$};
\node[left] at (0.5,2) {$l$}
}
\]
This equation can easily be verified by evaluating each side on $(k,l)$-tabloids. Equation (\ref{vertexreflection2}) holds similarly.
\end{proof}

\section{The Monoidal Structure on $\Prm (S_d)$}\label{kronecker}

Define $\Prm (S_d)$ to be the full skeletal subcategory of $\Rep S_d$ whose objects are direct sums of permutation modules $M^{\lambda}$, where $\lambda \vdash d$. 

Given an object $\bigoplus_{i \in \chi} M_i$ in $\Prm (S_d)$, and $k \in \chi$, write $\iota_{k}: M_i \rightarrow \bigoplus_{i \in \chi} M_i$ for the canonical inclusion and $\rho_{k}: \bigoplus_{i \in \chi} M_i \rightarrow M_i$ for the canonical projection.
A morphism $f:\bigoplus_{j} M_j \rightarrow \bigoplus_{i} M_{i}$ in $\Prm (S_d)$ can be identified with a matrix $(f_{ij})_{ij}$ where $f_{ij} = \rho_i f \iota_j : M_{j} \rightarrow M_{i}$. 
We slightly abuse notation by writing
\[
f = (f_{ij})_{ij} = \sum_{i} \sum_{j} f_{ij}
\]

Given any $\lambda \in \Lambda(n,d)$ in which $\kappa(\lambda)$ is dominated by the partition $\mu \vdash d$, we may write $M^{\lambda}$ to mean the object $M^{\mu}$ in $\Prm (S_d)$. For example, $M^{(1,0,2)}$ and $M^{(2,1)}$ both denote the same object in $\Prm (S_3)$. For $\lambda \in \Lambda(n,d)$, define morphisms $E_i 1_{\lambda} : M^{\lambda} \rightarrow M^{\lambda + \alpha_i}$ and $F_i 1_{\lambda} : M^{\lambda} \rightarrow M^{\lambda - \alpha_i}$ in $\Prm (S_d)$ by the diagrams
\begin{align*}
E_i \one_{\lambda} &:=
\begin{tikzpicture}[baseline=20, scale=0.75]
\laddercoordinates{3}{1}
\node[below] at (l00) {$\lambda_{i{-}1}$};
\node[below] at (l10) {$\lambda_i$};
\node[below] at (l20) {$\lambda_{i{+}1}$};
\node[below] at (l30) {$\lambda_{i{+}2}$};
\node[above] at (l11) {$\lambda_i{+}1$};
\node[above] at (l21) {$\lambda_{i{+}1}{-}1$};
\node at ($(l00)+(-1,1)$) {$\cdots$};
\ladderI{0}{0};
\ladderFn{1}{0}{}{}{$1$}
\ladderI{3}{0};
\node at ($(l30)+(1,1)$) {$\cdots$};
\end{tikzpicture} 
\qquad
\text{and}
\qquad
F_i \one_{\lambda} &:=
\begin{tikzpicture}[baseline=20, scale=0.75]
\laddercoordinates{3}{1}
\node[below] at (l00) {$\lambda_{i{-}1}$};
\node[below] at (l10) {$\lambda_i$};
\node[below] at (l20) {$\lambda_{i{+}1}$};
\node[below] at (l30) {$\lambda_{i{+}2}$};
\node[above] at (l11) {$\lambda_i{-}1$};
\node[above] at (l21) {$\lambda_{i{+}1}{+}1$};
\node at ($(l00)+(-1,1)$) {$\cdots$};
\ladderI{0}{0};
\ladderEn{1}{0}{}{}{$1$}
\ladderI{3}{0};
\node at ($(l30)+(1,1)$) {$\cdots$};
\end{tikzpicture} 
\end{align*}

Let $\lambda, \mu \vdash d$ of length $m$, $n$, respectively. We define $A^{\lambda}_{\mu}$ to be the set of $m \times n$ matrices $A=(A_{ij})_{ij}$ with entries in $\N$, such that $\lambda_i = \sum_j A_{ij}$ and $\mu_j = \sum_{i} A_{ij}$. For example
\begin{align*}
A^{(3,1)}_{(2,2)}
&=
\{
\begin{bmatrix}
    2 & 1  \\
    0 & 1 \\
  \end{bmatrix}
,
\begin{bmatrix}
    1 & 2  \\
    1 & 0 \\
  \end{bmatrix}
\}
\text{,}
\qquad
A^{(3,1)}_{(3,1)}
=
\{
\begin{bmatrix}
    3 & 0  \\
    0 & 1 \\
  \end{bmatrix}
,
\begin{bmatrix}
    2 & 1  \\
    1 & 0 \\
  \end{bmatrix}
\}.
\end{align*}
Given a matrix $A \in A^{\lambda}_{\mu}$, let $M^{A}$ denote the permutation module indexed by the sequence 
$(A_{11}, A_{12}, \ldots , A_{21} , A_{22} , \ldots , A_{mn})$. For $1 \leq i \leq m$, $1 \leq j \leq n-1$, 
define morphisms $E_{ij} 1_A : M^A \rightarrow M^{A+e_{ij} - e_{i,j+1}}$ and 
$F_{ij} 1_A : M^A \rightarrow M^{A-e_{ij} + e_{i,j+1}}$ in $\Prm (S_d)$ by the diagrams,
\begin{align*}
E_{ij} \one_{A} : M^A \rightarrow M^{A+e_{ij} - e_{i,j+1}} &:=
\begin{tikzpicture}[baseline=20]
\laddercoordinates{6}{1}
\node[below] at (l00) {$A_{11}$};
\node[below] at (l10) {$\cdots$};
\node[below] at (l20) {$A_{ij}$};
\node[below] at (l30) {$A_{i,j+1}$};
\node[below] at (l40) {$\cdots$};
\node[below] at (l50) {$A_{mn}$};
\node[above] at (l21) {$A_{ij}{+}1$};
\node[above] at (l31) {$A_{i,j+1}{-}1$};
\node at ($(l10)+(-0.75,1)$) {$\cdots$};
\ladderI{0}{0};
\ladderI{1}{0};
\ladderFn{2}{0}{}{}{$1$}
\ladderI{4}{0};
\ladderI{5}{0};
\node at ($(l40)+(0.75,1)$) {$\cdots$};
\end{tikzpicture} 
\displaybreak[1]\\
F_{ij} \one_{A} : M^A \rightarrow M^{A-e_{ij} + e_{i,j+1}}&:=
\begin{tikzpicture}[baseline=20]
\laddercoordinates{6}{1}
\node[below] at (l00) {$A_{11}$};
\node[below] at (l10) {$\cdots$};
\node[below] at (l20) {$A_{ij}$};
\node[below] at (l30) {$A_{i,j+1}$};
\node[below] at (l40) {$\cdots$};
\node[below] at (l50) {$A_{mn}$};
\node[above] at (l21) {$A_{ij}{-}1$};
\node[above] at (l31) {$A_{i,j+1}{+}1$};
\node at ($(l10)+(-0.75,1)$) {$\cdots$};
\ladderI{0}{0};
\ladderI{1}{0};
\ladderEn{2}{0}{}{}{$1$}
\ladderI{4}{0};
\ladderI{5}{0};
\node at ($(l40)+(0.75,1)$) {$\cdots$};
\end{tikzpicture}  
\end{align*}

If $A \in A^{\lambda}_{\mu}$ then 
\begin{enumerate}
\item
Either $A + (e_{ij} - e_{i,j+1}) \in A^{\lambda}_{\mu + \alpha_j}$ or $E_{ij} 1_A = 0$. The latter case occurs if and only if $A_{i,j+1} =0$.
\item
Either $A - (e_{ij} - e_{i,j+1}) \in A^{\lambda}_{\mu - \alpha_j}$ or $F_{ij} 1_A = 0$. The latter case occurs if and only if $A_{ij} =0$.
\end{enumerate}
Hence the following theorem makes sense.

\begin{thm}\label{monprm}
The category $\Prm (S_d)$ is closed under $\otimes$-product. In particular the bifunctor 
$\cdot \otimes \cdot : \Prm (S_d) \times \Prm (S_d) \rightarrow \Prm (S_d)$ can be expressed,
\begin{itemize}
\item
Objects: 
\begin{equation*}
M^{\lambda} \otimes M^{\mu} = \bigoplus_{A \in A_{\mu}^{\lambda}} M^{A} 
= \bigoplus_{A \in A_{\lambda}^{\mu}} M^{A}
\end{equation*}
where a matrix $A$ is regarded as the sequence $(A_{11}, A_{12}, \ldots , A_{21} , A_{22} , \ldots , A_{mn})$. 
\item
Morphisms:
Let $\lambda, \mu \vdash d$ of length $m$, $n$, respectively. Then,
\begin{align}
E_j 1_{\lambda} \otimes 1_{\mu} &= \sum_{A \in A_{\lambda}^{\mu}} \sum_{i=1}^{n}
E_{ij} 1_A :
\bigoplus_{A \in A_{\lambda}^{\mu}} M^A \rightarrow \bigoplus_{A \in A_{\lambda + \alpha_j}^{\mu}} M^A
\label{eq:1}\displaybreak[1]\\
F_j 1_{\lambda} \otimes 1_{\mu} &= \sum_{A \in A_{\lambda}^{\mu}} \sum_{i=1}^{n}
F_{ij} 1_A :
\bigoplus_{A \in A_{\lambda}^{\mu}} M^A \rightarrow \bigoplus_{A \in A_{\lambda - \alpha_j}^{\mu}} M^A
\label{eq:2}\displaybreak[1]\\
1_{\lambda} \otimes E_j 1_{\mu} &= \sum_{A \in A^{\lambda}_{\mu}} \sum_{i=1}^{m}
E_{ij} 1_A : \bigoplus_{A \in A^{\lambda}_{\mu}} M^A \rightarrow \bigoplus_{A \in A^{\lambda}_{\mu + \alpha_j}} M^A
\label{eq:3}\displaybreak[1]\\ \label{eq:4}
1_{\lambda} \otimes F_j 1_{\mu} &= \sum_{A \in A^{\lambda}_{\mu}} \sum_{i=1}^{m}
F_{ij} 1_A : \bigoplus_{A \in A^{\lambda}_{\mu}} M^A \rightarrow \bigoplus_{A \in A^{\lambda}_{\mu - \alpha_j}} M^A
\end{align}
\end{itemize}
\end{thm}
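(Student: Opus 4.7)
The plan is to establish the theorem by realising $M^{\lambda} \otimes M^{\mu}$ concretely as a space of pairs of tabloids, then tracking how the generating morphisms $E_j 1_{\lambda}$ and $F_j 1_{\lambda}$ act on such pairs via the tabloid description of Proposition \ref{tabact}.

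First I would set up the decomposition $M^{\lambda} \otimes M^{\mu} = \bigoplus_{A \in A^{\lambda}_{\mu}} M^A$. Given a $\lambda$-tabloid $T$ and a $\mu$-tabloid $T'$, define the associated matrix by $A(T,T')_{ij} := |T^i \cap T'^j|$. Since the rows of $A(T,T')$ sum to $\lambda$ and the columns sum to $\mu$, we have $A(T,T') \in A^{\lambda}_{\mu}$. The diagonal $S_d$-action permutes pairs, and two pairs $(T,T')$, $(S,S')$ lie in the same orbit if and only if they produce the same matrix. The stabiliser of any representative pair with matrix $A$ is the Young subgroup $\prod_{i,j} S_{T^i \cap T'^j}$, of size $\prod_{i,j} A_{ij}!$, so the orbit spans a copy of the permutation module indexed by the composition $(A_{11}, A_{12}, \ldots, A_{mn})$, i.e.\ $M^A$. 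Summing over all $A \in A^{\lambda}_{\mu}$ gives the claimed decomposition; exchanging the roles of $\lambda$ and $\mu$ gives the alternative indexing by $A^{\mu}_{\lambda}$ via transposition. This also shows closure of $\Prm(S_d)$ under $\otimes$.

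Next I would verify formula \eqref{eq:1} for $E_j 1_{\lambda} \otimes 1_{\mu}$; the other three formulas are handled in exactly the same manner. By Proposition \ref{tabact}, $E_j 1_{\lambda}$ sends a $\lambda$-tabloid $T$ to $\sum_{k \in T^{j+1}} c_{j,\{k\}} T$. Tensoring with $1_{\mu}$ and restricting to the summand $M^A$ (where $A \in A^{\lambda}_{\mu}$), we pick a representative pair $(T,T')$ with $A(T,T') = A$ and compute
\[
(E_j 1_{\lambda} \otimes 1_{\mu})(T \otimes T') = \sum_{k \in T^{j+1}} (c_{j,\{k\}} T) \otimes T'.
\]
An element $k \in T^{j+1}$ lies in exactly one of the sets $T^{j+1} \cap T'^i$ for $i = 1,\ldots,n$; moving $k$ from row $j+1$ to row $j$ transforms the associated matrix from $A$ to $A + e_{ij} - e_{i,j+1}$. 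Grouping the sum according to which $i$ the element $k$ belongs to, and recognising that inside each group we get exactly the tabloid description of $E_{ij} 1_A$ applied to $T \otimes T'$ (viewed as a basis element of $M^A$), we obtain
\[
(E_j 1_{\lambda} \otimes 1_{\mu})\big|_{M^A} = \sum_{i=1}^{n} E_{ij} 1_A,
\]
which is \eqref{eq:1}. The formula for $F_j 1_{\lambda} \otimes 1_{\mu}$ follows identically, with elements moving from row $j$ to row $j+1$. For \eqref{eq:3} and \eqref{eq:4} the argument is symmetric, with the action instead redistributing elements in the second tabloid $T'$, i.e.\ across columns of $A$ rather than rows.

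The only step requiring genuine care is the bookkeeping in the middle paragraph: one must check that when $E_j 1_{\lambda}$ produces summands $c_{j,\{k\}} T \otimes T'$ for various $k$, the collection of summands with $k \in T^{j+1} \cap T'^i$ exactly matches the definition of $E_{ij} 1_A$ applied to a representative of the $M^A$ summand (as a composition-indexed permutation module where the $(i,j)$-entry corresponds to the appropriate position in the linear sequence $(A_{11}, A_{12}, \ldots, A_{mn})$). This is essentially the observation that the tabloid calculus of Section \ref{Snd1} is compatible with the reindexing of matrices as compositions, and once it is verified for $E_j 1_{\lambda} \otimes 1_{\mu}$ the other three identities follow by the same reasoning.
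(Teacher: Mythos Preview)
Your approach is essentially identical to the paper's: both realise $M^\lambda \otimes M^\mu = \bigoplus_A M^A$ via $A$-tabloids (the paper cites \cite{eagle} for this, you sketch it directly), pick one of the four identities, and verify it by partitioning the sum $\sum_k c_{j,\{k\}}T$ according to which cell of the other tabloid contains $k$. The paper verifies \eqref{eq:3} rather than \eqref{eq:1}, which happens to match your setup more cleanly, since \eqref{eq:3} uses the $A^\lambda_\mu$ indexing with $(T\otimes T')^{ij}=T^i\cap T'^j$.

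There is one indexing slip to fix. With your convention $A(T,T')_{ij}=|T^i\cap T'^j|$, moving $k$ from $T^{j+1}$ to $T^j$ (with $k\in T'^i$) sends $A$ to $A+e_{j,i}-e_{j+1,i}$, not $A+e_{i,j}-e_{i,j+1}$ as you wrote. This is precisely why the paper switches to the transposed indexing $A\in A_\lambda^\mu$, $(T\otimes T')^{ij}=T^j\cap T'^i$, for \eqref{eq:1} and \eqref{eq:2}. Either adopt that convention for those two formulas, or simply prove \eqref{eq:3} instead, where your existing setup already lands on $E_{ij}1_A$ on the nose.
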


Before we prove Theorem \ref{monprm}, let us calculate some examples.

\subsection{Examples}
In all our examples $d=4$.
It is easy to see that 
\begin{align*}
M^{(3,1)} \otimes M^{(2,2)} &= M^{(2,1,1)} \oplus M^{(2,1,1)}
\displaybreak[1]\\ 
M^{(3,1)} \otimes M^{(3,1)} &= M^{(3,1)} \oplus M^{(2,1,1)} 
\displaybreak[1]\\ 
M^{(3,1)} \otimes M^{(4)} &= M^{(3,1)} 
\end{align*}
Let us show that, in matrix notation,
\begin{equation}
\begin{tikzpicture}[baseline=15, scale=0.75]
\laddercoordinates{1}{1}
\node[below] at (l00) {$3$};
\node[below] at (l10) {$1$};
\ladderI{0}{0};
\ladderI{1}{0};
\end{tikzpicture} 
\otimes
\fork{2}{2}{4}
=
\begin{bmatrix}
\fork{2}{1}{3} 
\begin{tikzpicture}[baseline=15, scale=0.75]
\laddercoordinates{1}{1}
\node[below] at (l00) {$1$};
\ladderI{0}{0};
\end{tikzpicture} \\
\fork{2}{1}{3} 
\begin{tikzpicture}[baseline=15, scale=0.75]
\laddercoordinates{1}{1}
\node[below] at (l00) {$1$};
\ladderI{0}{0};
\end{tikzpicture}
\end{bmatrix}
:
M^{(3,1)} \rightarrow M^{(2,1,1)} \oplus M^{(2,1,1)} 
\end{equation}

Now,
\begin{align*}
\begin{tikzpicture}[baseline=15, scale=0.75]
\laddercoordinates{1}{1}
\node[below] at (l00) {$3$};
\node[below] at (l10) {$1$};
\ladderI{0}{0};
\ladderI{1}{0};
\end{tikzpicture} 
\otimes
\fork{2}{2}{4}
&=
\frac{1}{2}
\left(
\begin{tikzpicture}[baseline=15, scale=0.75]
\laddercoordinates{1}{2}
\node[below] at (l00) {$3$};
\node[below] at (l10) {$1$};
\ladderI{0}{0};
\ladderI{1}{0};
\ladderI{0}{1};
\ladderI{1}{1};
\end{tikzpicture} 
\otimes
\tikz[baseline=15,yscale=0.75]{
\laddercoordinates{1}{2}
\ladderEn{0}{0}{$3$}{$1$}{$1$}
\ladderEn{0}{1}{$2$}{$2$}{$1$}
\node[below] at (l00) {$4$};
\node[below] at (l10) {$0$};
}
\right)
\displaybreak[1]\\
&=
\frac{1}{2}
\left(
\begin{tikzpicture}[baseline=15, scale=0.75]
\laddercoordinates{1}{1}
\node[below] at (l00) {$3$};
\node[below] at (l10) {$1$};
\ladderI{0}{0};
\ladderI{1}{0};
\end{tikzpicture} 
\otimes
\tikz[baseline=15,yscale=0.75]{
\laddercoordinates{1}{1}
\ladderEn{0}{0}{$2$}{$2$}{$1$}
\node[below] at (l00) {$3$};
\node[below] at (l10) {$1$};
}
\right)
\left(
\begin{tikzpicture}[baseline=15, scale=0.75]
\laddercoordinates{1}{1}
\node[below] at (l00) {$3$};
\node[below] at (l10) {$1$};
\ladderI{0}{0};
\ladderI{1}{0};
\end{tikzpicture} 
\otimes
\tikz[baseline=15,yscale=0.75]{
\laddercoordinates{1}{1}
\ladderEn{0}{0}{$3$}{$1$}{$1$}
\node[below] at (l00) {$4$};
\node[below] at (l10) {$0$};
}
\right)
\end{align*}
where the first equality follows from (\ref{band}).
By applying (\ref{eq:3}) and putting the resulting sum in matrix form we get that
\begin{align*}
\begin{tikzpicture}[baseline=15, scale=0.75]
\laddercoordinates{1}{1}
\node[below] at (l00) {$3$};
\node[below] at (l10) {$1$};
\ladderI{0}{0};
\ladderI{1}{0};
\end{tikzpicture} 
\otimes
\tikz[baseline=15,yscale=0.75]{
\laddercoordinates{1}{1}
\ladderEn{0}{0}{$3$}{$1$}{$1$}
\node[below] at (l00) {$4$};
\node[below] at (l10) {$0$};
}
&=
\begin{bmatrix}
\begin{tikzpicture}[baseline=20, scale=0.75]
\laddercoordinates{3}{1}
\node[below] at (l00) {$3$};
\node[below] at (l10) {$0$};
\node[below] at (l20) {$1$};
\node[below] at (l30) {$0$};
\node[above] at (l21) {$0$};
\node[above] at (l31) {$1$};
\ladderI{0}{0};
\ladderEn{2}{0}{}{}{$1$}
\ladderI{1}{0};
\end{tikzpicture} 
\\
\begin{tikzpicture}[baseline=20, scale=0.75]
\laddercoordinates{3}{1}
\node[below] at (l00) {$3$};
\node[below] at (l10) {$0$};
\node[below] at (l20) {$1$};
\node[below] at (l30) {$0$};
\node[above] at (l01) {$2$};
\node[above] at (l11) {$1$};
\ladderI{2}{0};
\ladderEn{0}{0}{}{}{$1$}
\ladderI{3}{0};
\end{tikzpicture} 
\end{bmatrix}
:
M^{
\begin{bmatrix}
    3 & 0  \\
    1 & 0 \\
  \end{bmatrix}
}
\rightarrow
M^{
\begin{bmatrix}
    3 & 0  \\
    0 & 1 \\
  \end{bmatrix}
}
\oplus
M^{
\begin{bmatrix}
    2 & 1  \\
    1 & 0 \\
  \end{bmatrix}
}
\\
&=
\begin{bmatrix}
\begin{tikzpicture}[baseline=20, scale=0.75]
\laddercoordinates{3}{1}
\node[below] at (l00) {$3$};
\node[below] at (l10) {$1$};
\ladderI{0}{0};
\ladderI{1}{0};
\end{tikzpicture} 
\\
\fork{2}{1}{3}
\begin{tikzpicture}[baseline=15, scale=0.75]
\laddercoordinates{1}{1}
\node[below] at (l00) {$1$};
\ladderI{0}{0};
\end{tikzpicture} 
\end{bmatrix}
:
M^{(3,1)} \rightarrow M^{(3,1)} \oplus M^{(2,1,1)} 
\end{align*}

Furthermore,
\begin{align*}
\begin{tikzpicture}[baseline=15, scale=0.75]
\laddercoordinates{1}{1}
\node[below] at (l00) {$3$};
\node[below] at (l10) {$1$};
\ladderI{0}{0};
\ladderI{1}{0};
\end{tikzpicture} 
\otimes
\tikz[baseline=15,yscale=0.75]{
\laddercoordinates{1}{1}
\ladderEn{0}{0}{$2$}{$2$}{$1$}
\node[below] at (l00) {$3$};
\node[below] at (l10) {$1$};
}
&=
\begin{bmatrix}
\begin{tikzpicture}[baseline=20, scale=0.75]
\laddercoordinates{3}{1}
\node[below] at (l00) {$3$};
\node[below] at (l10) {$0$};
\node[below] at (l20) {$0$};
\node[below] at (l30) {$1$};
\node[above] at (l01) {$2$};
\node[above] at (l11) {$1$};
\ladderI{2}{0};
\ladderEn{0}{0}{}{}{$1$}
\ladderI{3}{0};
\end{tikzpicture} 
&
~~
&
\begin{tikzpicture}[baseline=20, scale=0.75]
\laddercoordinates{3}{1}
\node[below] at (l00) {$2$};
\node[below] at (l10) {$1$};
\node[below] at (l20) {$1$};
\node[below] at (l30) {$0$};
\node[above] at (l21) {$0$};
\node[above] at (l31) {$1$};
\ladderI{0}{0};
\ladderEn{2}{0}{}{}{$1$}
\ladderI{1}{0};
\end{tikzpicture} 
\\
\text{{\huge 0}}
&
~~
&
\begin{tikzpicture}[baseline=15, scale=0.75]
\laddercoordinates{3}{1}
\node[below] at (l00) {$2$};
\node[below] at (l10) {$1$};
\node[below] at (l20) {$1$};
\node[below] at (l30) {$0$};
\node[above] at (l01) {$1$};
\node[above] at (l11) {$2$};
\ladderI{2}{0};
\ladderEn{0}{0}{}{}{$1$}
\ladderI{3}{0};
\end{tikzpicture} 
\end{bmatrix}
\\
&=
\begin{bmatrix}
\fork{2}{1}{3}
\begin{tikzpicture}[baseline=15, scale=0.75]
\laddercoordinates{1}{1}
\node[below] at (l00) {$1$};
\ladderI{0}{0};
\end{tikzpicture} 
&
~~
&
\begin{tikzpicture}[baseline=15, scale=0.75]
\laddercoordinates{2}{1}
\node[below] at (l00) {$2$};
\node[below] at (l10) {$1$};
\node[below] at (l20) {$1$};
\ladderI{0}{0};
\ladderI{1}{0};
\ladderI{2}{0};
\end{tikzpicture} 
\\
\text{{\huge 0}}
&
~~
&
\begin{tikzpicture}[baseline=15, scale=0.75]
\laddercoordinates{2}{1}
\node[below] at (l00) {$2$};
\node[below] at (l10) {$1$};
\node[below] at (l20) {$1$};
\node[above] at (l01) {$1$};
\node[above] at (l11) {$2$};
\ladderI{2}{0};
\ladderEn{0}{0}{}{}{$1$}
\end{tikzpicture} 
\end{bmatrix}
\end{align*}
The result follows from the following matrix equation:
\[
\begin{bmatrix}
\fork{2}{1}{3}
\begin{tikzpicture}[baseline=15, scale=0.75]
\laddercoordinates{1}{1}
\node[below] at (l00) {$1$};
\ladderI{0}{0};
\end{tikzpicture} 
&
~~
&
\begin{tikzpicture}[baseline=15, scale=0.75]
\laddercoordinates{2}{1}
\node[below] at (l00) {$2$};
\node[below] at (l10) {$1$};
\node[below] at (l20) {$1$};
\ladderI{0}{0};
\ladderI{1}{0};
\ladderI{2}{0};
\end{tikzpicture} 
\\
\text{{\huge 0}}
&
~~
&
\begin{tikzpicture}[baseline=15, scale=0.75]
\laddercoordinates{2}{1}
\node[below] at (l00) {$2$};
\node[below] at (l10) {$1$};
\node[below] at (l20) {$1$};
\node[above] at (l01) {$1$};
\node[above] at (l11) {$2$};
\ladderI{2}{0};
\ladderEn{0}{0}{}{}{$1$}
\end{tikzpicture} 
\end{bmatrix}
\begin{bmatrix}
\begin{tikzpicture}[baseline=20, scale=0.75]
\laddercoordinates{3}{1}
\node[below] at (l00) {$3$};
\node[below] at (l10) {$1$};
\ladderI{0}{0};
\ladderI{1}{0};
\end{tikzpicture} 
\\
\fork{2}{1}{3}
\begin{tikzpicture}[baseline=15, scale=0.75]
\laddercoordinates{1}{1}
\node[below] at (l00) {$1$};
\ladderI{0}{0};
\end{tikzpicture} 
\end{bmatrix}
=
2
\begin{bmatrix}
\fork{2}{1}{3} 
\begin{tikzpicture}[baseline=15, scale=0.75]
\laddercoordinates{1}{1}
\node[below] at (l00) {$1$};
\ladderI{0}{0};
\end{tikzpicture} \\
\fork{2}{1}{3} 
\begin{tikzpicture}[baseline=15, scale=0.75]
\laddercoordinates{1}{1}
\node[below] at (l00) {$1$};
\ladderI{0}{0};
\end{tikzpicture}
\end{bmatrix}
\]

\subsection{Proof of Theorem \ref{monprm}}

Let $A$ be an $m \times n$ matrix with entries in $\N$, and $\sum_{i,j} A_{ij} = d$. The permutation module
$M^{A}$ has a basis of dissections, $T$, of $\underline{d}$ into $mn$ subsets 
\[
T = \{ 
T^{11}, T^{12}, \ldots, T^{21}, T^{22}, \ldots, T^{mn}
\}
\]
in which $A_{ij} = |T^{ij}|$. Call such a dissection an \textbf{$A$-tabloid} and the set $T^{ij}$ the $(i,j)$-cell of $T$.

By (\cite{eagle}, Lemma 3.1) we have the following decomposition of $S_d$-modules,
\begin{equation}\label{tensortabloid}
M^{\lambda} \otimes M^{\mu} = \bigoplus_{A \in A^{\lambda}_{\mu}} M^{A}
\end{equation}
in which a tensor product $T \otimes T' \in M^{\lambda} \otimes M^{\mu}$ is identified with the $A$-tabloid $T \otimes T' \in  \bigoplus_{A \in A^{\lambda}_{\mu}} M^{A}$ defined $(T \otimes T')^{ij} = T^i \cap T'^j$. 
Equivalently we may index this decomposition
 \begin{equation*}
M^{\lambda} \otimes M^{\mu} = \bigoplus_{A \in A_{\lambda}^{\mu}} M^{A}
\end{equation*}
and identify a tensor product $T \otimes T' \in M^{\lambda} \otimes M^{\mu}$ with the $A$-tabloid $T \otimes T' \in  \bigoplus_{A \in A^{\mu}_{\lambda}} M^{A}$ defined $(T \otimes T')^{ij} = T^j \cap T'^i$.

We just prove equation (\ref{eq:3}). The proofs of the other equations are similar. 

Given any $A$-tabloid $T$,
\[
E_{ij} 1_A (T) = \sum_{k \in T^{i,j+1}} c_{ij, \{k\}} T
\]
where $c_{ij, \{k\}} T$ is the tabloid obtained from $T$ by moving $k$ to the $(i,j)$-cell. 

Fix a $\lambda$-tabloid $T$ and $\mu$-tabloid $T'$. Define the $A$-tabloid $T \otimes T'$ by
$(T \otimes T')^{ij} = T^i \cap T'^j$ as in (\ref{tensortabloid}). 
Note that
\begin{equation}\label{eq:oldman}
c_{i,\{k\}} T \otimes c_{j,\{k\}} T' = c_{ij,\{k\}} (T \otimes T')
\end{equation}
In particular if $k \in T^{i}$ then $T \otimes (c_{j, \{k\}} T') = c_{ij, \{k\}} (T \otimes T')$. Now,
\begin{align*}
(1_{\lambda} \otimes E_j 1_{\mu} ) (T \otimes T') &= \sum_{k \in T'^{j+1}} T \otimes c_{j, \{ k\}} T' \displaybreak[1]\\
&= \sum_{i=1}^{m} \sum_{k \in T^i \cap T'^{j+1}} T \otimes c_{j, \{ k\}} T' \displaybreak[1]\\
&= \sum_{i=1}^{m} \sum_{k \in (T \otimes T')^{i,j+1}} c_{ij,\{k\}} (T \otimes T') &\text{by (\ref{eq:oldman})} \displaybreak[1]\\
&=  \sum_{i=1}^{m}
E_{ij} 1_A (T \otimes T')
\end{align*}
as required.

\section{Brauer Algebras and the CKM Principle}\label{BCKM}

In this section we derive pre-Karoubi presentations of the module categories for the Brauer algebra $\mathcal{B}_{d}^{(- 2n)}$, where $2n \geq d-1$, and the walled Brauer algebra $ \mathcal{B}_{r,s}^{(n)}$, where $n \geq r+s$. We first recall the definition of the Brauer algebra. 

The Brauer algebra $\mathcal{B}_{d}^{(\delta)}$, $\delta \in \Z$, has a $\C$-basis of $d$-diagrams. A $d$-diagram consists of $2d$ dots arranged with $d$ dots in each of two rows, and a pairing of these dots. For example, the following is a $6$-diagram,
 \begin{equation}\label{bdiagram}
 \tikz[baseline=2.5em]{
    \foreach \x in {1,...,6}
    {
      \draw (\x,0) node {$\bullet$};
      \draw (\x,2) node {$\bullet$};
    }
    \draw[thick] (1,2) to[out=-90,in=90] (1,0);
    \draw[thick] (2,2) to[out=-90,in=90] (2,0);
    \draw[thick] (3,2) to[out=-90,in=-90] (4,2);
    \draw[thick] (3,0) to[out=90,in=90] (4,0);
    \draw[thick] (5,2) to[out=-90,in=90] (5,0);
    \draw[thick] (6,2) to[out=-90,in=90] (6,0);
  }
 \end{equation}
Given two $d$-diagrams $x$, $y$, the product $xy$ in $\mathcal{B}_{d}^{(\delta)}$ is defined as $\delta^{l} z$, where $z$ is the diagram obtained by vertical juxtaposition of $x$ above $y$ (identifying the bottom row of dots in $x$ with the top row of dots in $y$) and removing any resulting closed loops; and $l$ is the number of such closed loops removed.

Index the vertices on each row from left to right. Define the element $c_{ij} \in \mathcal{B}_{d}^{(\delta)}$ to be the $d$-diagram in which the $i$-th vertex on the top (respectively bottom) row is paired with the $j$-th vertex on the top (respectively bottom) row, and all other vertices are paired with the vertex directly above/below itself. For example,  (\ref{bdiagram}) is the 6-diagram $c_{34}$.

Identify $S_d \subset \mathcal{B}_{d}^{(\delta)}$ with the $d$-diagrams in which all vertices are paired with a vertex in a different row. More precisely, the permutation $\sigma \in S_d$ is identified with the $d$-diagram $\sigma \in \mathcal{B}_{d}^{(\delta)}$ defined: the $i$-th vertex on the bottom row of $\sigma \in \mathcal{B}_{d}^{(\delta)}$ is paired with the $j$-th vertex on the top row if and only if $\sigma \in S_d$ maps $i$ to $j$.

\subsection{Schur-Weyl Duality in types C and D}\label{sym}

We define the Lie algebra $\mathfrak{sp}_{2n}$ and $\mathfrak{o}_{2n}$ with respect to the skew-symmeric and symmetric forms defined respectively by the matrices 
$
\begin{pmatrix}
    0 & I_n  \\
     - I_n & 0   
\end{pmatrix}
$
and
$
\begin{pmatrix}
    0 & I_n  \\
     I_n & 0   
\end{pmatrix}
$
,
 where $I_n$ is the $n \times n$ identity matrix. We then get,
 \begin{align*}
 \sp_{2n} &= \{ 
\begin{pmatrix}
    A & B  \\
     C & -A^{t}   
\end{pmatrix}
\in
\gl_{2n}
|
B=B^{t} , C=C^{t}
 \}
 \\
 \mathfrak{o}_{2n}
 &= \{ 
\begin{pmatrix}
    A & B  \\
     C & -A^{t}   
\end{pmatrix}
\in
\gl_{2n}
|
B=-B^{t} , C=-C^{t}
 \}
 \end{align*}
 
Let us take $\g$ to be either $\sp_{2n}$ or $\mathfrak{o}_{2n}$.
Both $\sp_{2n}$ and $\mathfrak{o}_{2n}$ have the Cartan subalgebra 
$\mathfrak{h}$ spanned by the elements $Z_{i} = e_{ii} - e_{n+i,n+i}$. Write $\varepsilon_{i} \in \mathfrak{h}^{*}$ for the element dual to $Z_i$. We write weights of $\g$-modules by their vector coordinates with respect to the basis $\{ \varepsilon_{i} \}_{i=1, \ldots, n}$ of $\mathfrak{h}^{*}$. The set of integral weights of $\g$ is $\Z^n$ if $\g=\sp_{2n}$ and $\Z^n \cup ((\frac{1}{2}, \ldots, \frac{1}{2})+\Z^n)$ if $\g=\mathfrak{o}_{2n}$.
The $\g$-dominant weights are those weights 
$(\lambda_1, \ldots , \lambda_n)$ in which 
\begin{align*}
\lambda_1 \geq \cdots \geq \lambda_n &\geq 0 &\text{if $\g = \sp_{2n}$}
\displaybreak[1]\\
\lambda_1 \geq \cdots \geq \lambda_{n-1} &\geq  |\lambda_n| &\text{if $\g = \mathfrak{o}_{2n}$}
\end{align*}

Let $\{v_1 , \ldots , v_n , v_{-1}, \ldots, v_{-n} \}$ denote the standard basis of $\C^{2n}$. Then with respect to the skew-symmetric form defined above, $v_{-i}$ is dual to $v_{i}$. Furthermore, under the natural action of $\sp_{2n}$ and $\mathfrak{o}_{2n}$ on $\C^{2n}$, the vector $v_{\pm i}$ spans the $ \pm \varepsilon_i$ weight space of $\C^{2n}$. In either case $\g = \sp_{2n}$ or $\g = \mathfrak{o}_{2n}$, the $\g$-module $W= \bigotimes^{d} \C^{2n}$ has the set of weights
\[
\Pi (W) = \bigcup_{j=0}^{\floor{\frac{d}{2}}}  
\{
\lambda \in \Z^n | \sum_{i} |\lambda_i| = d-2j
\}
\]

There is a right action of $\mathcal{B}_{d}^{(-2n)}$ on $W$ commuting with the natural action of $\mathfrak{sp}_{2n}$. For $i,j \in \{\pm 1 , \ldots , \pm n \}$ define
\[
\epsilon_{ij} =
\begin{cases}
1 &\text{if $i=-j$ and $i >0$,} \\
-1 &\text{if $i=-j$ and $i<0$,} \\
0 &\text{otherwise}
\end{cases}
\]
and let $\varepsilon : S_d \rightarrow \{ \pm1\}$ be the sign map. The right action of $\mathcal{B}_{d}^{(-2n)}$ on $W$ is given by,
\begin{align*}
(v_{i_1} \otimes \cdots \otimes v_{i_d}) \cdot \sigma &= \varepsilon (\sigma) v_{i_{\sigma(1)}} \otimes \cdots \otimes v_{i_{\sigma(d)}} & \text{for $\sigma \in S_d \subset \mathcal{B}_{d}^{(-2n)}$} \\
(v_{i_1} \otimes \cdots \otimes v_{i_d}) \cdot c_{j,j+1} &=
\epsilon_{i_j, i_{j+1}} v_{i_1} \otimes \cdots \otimes v_{i_{j-1}} \otimes \\&~~~~
\left(
\sum_{k=1}^{n} (v_{-k} \otimes v_{k} - v_{k} \otimes v_{-k})
\right)
\otimes v_{i_{j+2}}
\otimes \cdots \otimes v_{i_n}
\end{align*}

The $(\sp_{2n}, \mathcal{B}_{d}^{(-2n)})$-bimodule $W=\bigotimes^d \C^{2n}$ has a saturated multiplicity-free decomposition (\cite{doty3}, Proposition 1.1.3),
\[
W = \bigoplus_{j=0}^{\floor{\frac{d}{2}}} \bigoplus_{\lambda 
\in \Lambda^{+} (n , d - 2j)} V^{\lambda} \otimes \Delta^{\lambda}
\]
where $V^{\lambda}$ is the irreducible $\sp_{2n}$-module of highest weight $\lambda$, and the $\Delta^{\lambda}$ 
are irreducible $\mathcal{B}_{d}^{(-2n)}$ modules. If $2n \geq d-1$ then $\mathcal{B}_{d}^{(-2n)} = \End_{\sp_{2n}} W$ \cite{brown} and so $\{\Delta^{\lambda}\}_{\lambda \in \Lambda^{+} (n , d - 2j)}$ is the complete set of irreducible $\mathcal{B}_{d}^{(-2n)}$-modules up to isomorphism. Thus, if $2n \geq d-1$, then a pre-Karoubi presentation of $\Rep \mathcal{B}_{d}^{(-2n)}$ can be obtained from the CKM Principle. We do this in Proposition \ref{symsw}.

There is a right action of $\mathcal{B}_{d}^{(2n)}$ on $W$ commuting with the natural action of $\mathfrak{o}_{2n}$. This action is defined,
\begin{align*}
(v_{i_1} \otimes \cdots \otimes v_{i_d}) \cdot \sigma &= v_{i_{\sigma(1)}} \otimes \cdots \otimes v_{i_{\sigma(d)}} & \text{for $\sigma \in S_d \subset \mathcal{B}_{d}^{(2n)}$} \\
(v_{i_1} \otimes \cdots \otimes v_{i_d}) \cdot c_{j,j+1} &=
\delta_{i_j, - i_{j+1}} v_{i_1} \otimes \cdots \otimes v_{i_{j-1}} \otimes \\&~~~~
\left(
\sum_{k=1}^{n} (v_{k} \otimes v_{-k} + v_{-k} \otimes v_{k})
\right)
\otimes v_{i_{j+2}}
\otimes \cdots \otimes v_{i_n}
\end{align*}

(\cite{doty3}, Proposition 1.1.3) shows that the $(\mathfrak{o}_{2n}, \mathcal{B}_{d}^{(2n)})$-bimodule $W$ has a saturated multiplicity-free decomposition. Furthermore \cite{brown} shows that if $2n \geq d-1$ then $\mathcal{B}_{d}^{(2n)} = \End_{\mathfrak{o}_{2n}} W$. By the CKM Principle one can directly obtain a pre-Karoubi presentation of $\Rep \mathcal{B}_{d}^{(2n)}$ when $2n \geq d-1$. We do not do this here.

\begin{rem}
For all $\lambda \in \Pi (W)$, a basis for the $\lambda$-weight space, $W_{\lambda}$, of $W$ (as either an $\sp_{2n}$ or $\mathfrak{o_{2n}}$ module) is given by the $S_d$-orbit of the vectors of the form
$
v_{i_1} \otimes \cdots \otimes v_{i_r} \otimes v_{-{j_1}} \otimes \cdots \otimes v_{-j_s}
$,
where each $i_k > 0$, $j_k >0$.
In particular $W_{\lambda}$ has the following direct sum decomposition of $S_d$-modules,
\begin{align*}
W_{\lambda} &\cong \bigoplus_{(r,s)\vDash d} \bigoplus_{\substack{\mu \in \Lambda (n , r)\\ \nu \in \Lambda (n,s) \\ \lambda = \mu - \nu}}
\left( M^{\mu} \boxtimes M^{\nu} \right) \otimes_{\C [S_r \times S_s]} \C [S_d]
\\
\nonumber
&= \bigoplus_{(r,s)\vDash d} \bigoplus_{\substack{\mu \in \Lambda (n , r)\\ \nu \in \Lambda (n,s) \\ \lambda = \mu - \nu}}
M^{(\mu, \nu)}
\end{align*}
This agrees with the weight space decomposition of $\bigotimes^d \C^{2n}$ given in \cite{doty2}.
\end{rem}

\subsection{A pre-Karoubi presentation of $\Rep \mathcal{B}_{d}^{(-2n)}$}
Our next goal is to derive a diagrammatic pre-Karoubi presentation for $\Rep \mathcal{B}_{d}^{(-2n)}$ where $2n \geq d-1$. For this we need a diagrammatic presentation of $\dU \sp_{2n}$.

Define elements of $\sp_{2n}$,
\begin{align*}
E_i = e_{i,i+1} - e_{n+i+1,n+i} &\qquad \text{ and } \qquad F_i = e_{i+1,i} - e_{n+i,n+i+1} 
\displaybreak[1]\\
X_{j} =e_{j,n+j} &\qquad \text{ and } \qquad Y_{j} =e_{n+j,j} 
\displaybreak[1]\\
Z_{j} = e_{jj} - e_{n+j,n+j} &\qquad \text{ and } \qquad H_i =Z_{i}- Z_{i+1}
\end{align*}
where $ 1 \leq i \leq n-1$ and $1 \leq j \leq n$.
The standard Chevalley generators of $\sp_{2n}$ are 
\[
\{E_i, F_i, H_i \}_{i=1,\ldots, n-1} \cup \{ X_n, Y_{n}, Z_n\}
\]
corresponding to the standard base $\{\varepsilon_1 - \varepsilon_2 , \ldots, \varepsilon_{n-1} - \varepsilon_{n}, 2 \varepsilon_n \}$. The elements $X_i$, $Y_i$, for $i < n$ can be defined in terms of these generators as
\begin{align}\label{XY}
X_{i} = \frac{1}{2} [E_i , [E_i , X_{i+1}]] 
\qquad
\text{ and }
\qquad
Y_{i} = \frac{1}{2} [F_i , [F_i , Y_{i+1}]]
\end{align}

The category $\dU \sp_{2n}$ has presentation,
\begin{itemize}
\item
Objects: Sequences in $\Z^n$. 
\item
Morphisms: Write $\varepsilon_i$ for the sequence in $\Z^n$ with a 1 in the $i$-th position and 0's elsewhere. The morphisms are generated by $E^{(r)}_{i} 1_{\lambda} \in 1_{\lambda + r \alpha_i} \dot{\mathcal{U}} \sp_{2n}  1_{\lambda}$ and $F^{(r)}_{i} 1_{\lambda} \in 1_{\lambda - r \alpha_i} \dot{\mathcal{U}} \mathfrak{sp}_{2n}  1_{\lambda}$ for $ 1 \leq i \leq n-1$, and $X_{j} 1_{\lambda} \in 1_{\lambda + 2 \varepsilon_j} \dot{\mathcal{U}} \sp_{2n}  1_{\lambda}$ and $Y_{j} 1_{\lambda} \in 1_{\lambda - 2 \varepsilon_j} \dot{\mathcal{U}} \sp_{2n}  1_{\lambda}$ for $1 \leq j \leq n$. These satisfy relations (\ref{rel:1}), (\ref{rel:2}), (\ref{rel:3}), (\ref{rel:4}), (\ref{rel:5}) as well as,
\begin{align}
X_j Y_{j} 1_{\lambda} &= Y_{j} X_{j} 1_{\lambda} + \lambda_j 1_{\lambda}
\label{Usp:1}
\displaybreak[1]\\
(X_i + E_i X_{i+1} E_{i})1_{\lambda} &= (E^{(2)}_i X_{i+1} + X_{i+1} E^{(2)}_{i} ) 1_{\lambda}
\label{Usp:2}
\displaybreak[1]\\
 2 X_{i+1} E_{i} X_{i+1} 1_{\lambda} &= (X_{i+1}^{2} E_{i} + E_{i} X_{i+1}^{2}) 1_{\lambda}
 \label{Usp:3}
\displaybreak[1]\\
E_{i} X_{i} 1_{\lambda} &= X_{i} E_{i} 1_{\lambda}
\label{Usp:4}
\displaybreak[1]\\
E_i Y_{j} 1_{\lambda} &= Y_{j} E_{i} 1_{\lambda} &\text{if $i \neq j$}
\label{Usp:5}
\displaybreak[1]\\
E_i X_{j} 1_{\lambda} &= X_{j} E_{i} 1_{\lambda} &\text{if $|i - j| >1$}
\label{Usp:6}
\displaybreak[1]\\
X_i Y_j 1_{\lambda} &= Y_{j} X_i 1_{\lambda} &\text{if $i \neq j$}
\label{Usp:7}
\end{align}
together with the equations formed by interchanging each $E_i$ with $F_i$ and each $X_j$ with $Y_{j}$.
\end{itemize}

Indeed equation (\ref{Usp:2}) corresponds to the definitions of the redundant generators in (\ref{XY}).
The standard Cartan matrix for $\sp_{2n}$ is
\[
\begin{pmatrix}
    2 & -1 & 0 & \cdots  & 0 \\
     -1 & 2 & -1 & \ddots  & \vdots  \\
     0 & \ddots & \ddots & -1 & 0\\
    \vdots & \ddots & -1 & 2  & -1 \\
    0 & \cdots & 0 & -2  & 2
\end{pmatrix}
\]
The Serre relation $\ad^2 (X_n) E_{n-1} 1_{\lambda}= 0$ is a special case of (\ref{Usp:3}). The Serre relation 
$\ad^{3} (E_{n-1}) X_n 1_{\lambda} =0$ follows from (\ref{Usp:4}) and (\ref{Usp:2}). It is not hard to check that each 
relation (\ref{Usp:1})-(\ref{Usp:7}) must hold in $\dU \sp_{2n}$ and each Serre relation, (\ref{Ug1}) - (\ref{Ug3}), defining $\dU \sp_{2n}$  follows from the relations  (\ref{rel:1}) -  (\ref{rel:5}), (\ref{Usp:1})-(\ref{Usp:7}). Hence this is indeed a presentation of $\dU \sp_{2n}$.

We can identify morphisms in $\dU \sp_{2n}$ as web diagrams in the following way. Define an \textbf{$n$-ladder with bells} to be a diagram formed by vertical stacking of $n$-ladders and the following diagrams consisting of $n$ vertical strands labelled by integers,
\begin{align}
\begin{tikzpicture}[baseline=20, xscale=0.75]
\laddercoordinates{3}{1}
\node[below] at (l00) {$\lambda_{i{-}1}$};
\node[below] at (l10) {$\lambda_i$};
\node[below] at (l20) {$\lambda_{i{+}1}$};
\node[above] at (l11) {$\lambda_i{+}2$};
\node at ($(l00)+(-1,1)$) {$\cdots$};
\ladderI{0}{0};
\ladderI{1}{0};
\ladderI{2}{0};
\node at ($(l20)+(1,1)$) {$\cdots$};
\node at ($(l10)+(0,0.8)$) [amp, draw, scale=0.75] {};
\end{tikzpicture} 
\qquad
\text{and}
\qquad
\begin{tikzpicture}[baseline=20, xscale=0.75]
\laddercoordinates{3}{1}
\node[below] at (l00) {$\lambda_{i{-}1}$};
\node[below] at (l10) {$\lambda_i$};
\node[below] at (l20) {$\lambda_{i{+}1}$};
\node[above] at (l11) {$\lambda_i{-}2$};
\node at ($(l00)+(-1,1)$) {$\cdots$};
\ladderI{0}{0};
\ladderI{1}{0};
\ladderI{2}{0};
\node at ($(l20)+(1,1)$) {$\cdots$};
\node at ($(l10)+(0,0.8)$) [coamp, draw, scale=0.75] {};
\end{tikzpicture} 
\end{align}
We call the above bivalent vertices, \textbf{bells} and \textbf{cobells} respectively.
We identify the morphisms $E^{(r)}_{i} 1_{\lambda}$ and $F^{(r)}_{i} 1_{\lambda}$ in $\dU \sp_{2n}$ with $n$-ladders as in (\ref{glnladder}), and identify
\begin{align}\label{sp2nladder}
X_i \one_{\lambda} &=
\begin{tikzpicture}[baseline=20, xscale=0.75]
\laddercoordinates{3}{1}
\node[below] at (l00) {$\lambda_{i{-}1}$};
\node[below] at (l10) {$\lambda_i$};
\node[below] at (l20) {$\lambda_{i{+}1}$};
\node[above] at (l11) {$\lambda_i{+}2$};
\node at ($(l00)+(-1,1)$) {$\cdots$};
\ladderI{0}{0};
\ladderI{1}{0};
\ladderI{2}{0};
\node at ($(l20)+(1,1)$) {$\cdots$};
\node at ($(l10)+(0,0.8)$) [amp, draw, scale=0.75] {};
\end{tikzpicture} 
\qquad
\text{and}
\qquad
Y_i \one_{\lambda} &=
\begin{tikzpicture}[baseline=20, xscale=0.75]
\laddercoordinates{3}{1}
\node[below] at (l00) {$\lambda_{i{-}1}$};
\node[below] at (l10) {$\lambda_i$};
\node[below] at (l20) {$\lambda_{i{+}1}$};
\node[above] at (l11) {$\lambda_i{-}2$};
\node at ($(l00)+(-1,1)$) {$\cdots$};
\ladderI{0}{0};
\ladderI{1}{0};
\ladderI{2}{0};
\node at ($(l20)+(1,1)$) {$\cdots$};
\node at ($(l10)+(0,0.8)$) [coamp, draw, scale=0.75] {};
\end{tikzpicture} 
\end{align}

It is not hard to show that $\dU \sp_{2n}$ has presentation,
\begin{itemize}
\item
Objects: Sequences in $\Z^n$.
\item
Morphisms: Morphisms from $\lambda$ to $\mu$ are $\C$-linear combinations of $n$-ladders with bells satisfying relations (\ref{eq:IHlad}), (\ref{eq:IHlad2}), (\ref{eq:EE}), (\ref{eq:EF=FE}), (\ref{eq:REP4}), invariance under upwards-orientation preserving planar isotopy, as well as the following relations,
\begin{align}
\begin{tikzpicture}[baseline=40]
\ladderI{0}{0};
\ladderI{0}{1};
\node[below] at (l00) {$k$};
\node at (0,1) [coamp, draw, scale=0.75] {};
\node at (0,1.5) [amp, draw, scale=0.75] {};
\end{tikzpicture}
~
&=
~
\begin{tikzpicture}[baseline=40]
\ladderI{0}{0};
\ladderI{0}{1};
\node[below] at (l00) {$k$};
\node at (0,1) [amp, draw, scale=0.75] {};
\node at (0,1.5) [coamp, draw, scale=0.75] {};
\end{tikzpicture}
~
{+}
~
k
\cdot
\begin{tikzpicture}[baseline=40]
\ladderI{0}{0};
\ladderI{0}{1};
\node[below] at (l00) {$k$};
\end{tikzpicture}
\\
\begin{tikzpicture}[baseline=40]
\ladderI{0}{0};
\ladderI{1}{0};
\ladderI{0}{1};
\ladderI{1}{1};
\node at ($(l01)$) [amp, draw, scale=0.75] {};
\end{tikzpicture} 
+
\begin{tikzpicture}[baseline=40]
\ladderF{0}{0}{}{}
\ladderF{0}{1}{}{}
\node at ($(l11)$) [amp, draw, scale=0.75] {};
\end{tikzpicture} 
&=
\begin{tikzpicture}[baseline=40]
\ladderI{0}{0};
\ladderI{1}{0};
\ladderFn{0}{1}{}{}{2}
\node at ($(l11)$) [amp, draw, scale=0.75] {};
\end{tikzpicture}
+
\begin{tikzpicture}[baseline=40]
\ladderI{0}{1};
\ladderI{1}{1};
\ladderFn{0}{0}{}{}{2}
\node at ($(l11)$) [amp, draw, scale=0.75] {};
\end{tikzpicture}
\displaybreak[1]
\\
2
\begin{tikzpicture}[baseline=40, yscale=0.75]
\laddercoordinates{1}{3}
\ladderI{0}{0};
\ladderI{0}{2};
\ladderI{1}{0};
\ladderI{1}{2};
\ladderF{0}{1}{}{}
\node at ($(l12)$) [amp, draw, scale=0.75] {};
\node at ($(l11)-(0,0.5)$) [amp, draw, scale=0.75] {};
\end{tikzpicture}
&=
\begin{tikzpicture}[baseline=40, yscale=0.75]
\laddercoordinates{1}{3}
\ladderI{0}{0};
\ladderI{0}{2};
\ladderI{1}{0};
\ladderI{1}{2};
\ladderF{0}{1}{}{}
\node at ($(l13)-(0,0.75)$) [amp, draw, scale=0.75] {};
\node at ($(l12)$) [amp, draw, scale=0.75] {};
\end{tikzpicture}
+
\begin{tikzpicture}[baseline=40, yscale=0.75]
\laddercoordinates{1}{3}
\ladderI{0}{0};
\ladderI{0}{2};
\ladderI{1}{0};
\ladderI{1}{2};
\ladderF{0}{1}{}{}
\node at ($(l10)+(0,0.75)$) [amp, draw, scale=0.75] {};
\node at ($(l11)$) [amp, draw, scale=0.75] {};
\end{tikzpicture}
\displaybreak[1]
\\
\begin{tikzpicture}[baseline=40]
\ladderI{0}{0};
\ladderI{1}{0};
\ladderF{0}{1}{}{}
\node at ($(l01)$) [amp, draw, scale=0.75] {};
\end{tikzpicture}
&=
\begin{tikzpicture}[baseline=40]
\ladderI{0}{1};
\ladderI{1}{1};
\ladderF{0}{0}{}{}
\node at ($(l01)$) [amp, draw, scale=0.75] {};
\end{tikzpicture}
\displaybreak[1]
\\
\begin{tikzpicture}[baseline=40]
\ladderI{0}{0};
\ladderI{1}{0};
\ladderE{0}{1}{}{}
\node at ($(l11)$) [amp, draw, scale=0.75] {};
\end{tikzpicture}
&=
\begin{tikzpicture}[baseline=40]
\ladderI{0}{1};
\ladderI{1}{1};
\ladderE{0}{0}{}{}
\node at ($(l11)$) [amp, draw, scale=0.75] {};
\end{tikzpicture}
\end{align}
together with the relations formed by reversing the orientation of horizontal strands and interchanging bells and cobells. We interpret unlabeled horizontal strands as being labeled by 1, and unlabeled vertical strands as being labeled by arbitrary compatible labels. These diagrams are to be interpreted as having some number of vertical strands to the left and right.
\end{itemize}

We call diagrams depicting morphisms in this presentation of $\dU \sp_{2n}$, \textbf{$\sp_{2n}$-ladders}.

\begin{rem}
For $\sp_{2n}$-ladders to be invariant under planar isotopy it remains to check that the relation 
\begin{align*}
X_i X_j 1_{\lambda} = X_{j} X_i 1_{\lambda} &&\text{if $i \neq j$}
\end{align*}
holds in $\dU \sp_{2n}$. Indeed this relation can be derived from the other relations. Alternatively, this must hold in $\dU \sp_{2n}$ since $[X_i, X_j] =0$ in $\sp_{2n}$.
\end{rem}

\begin{rem}
A different diagrammatic presentation of $\dU \sp_{2n}$ can be obtained by choosing a different base for the weight lattice of $\sp_{2n}$ and playing this same game. Diagrammatic presentations of $\dU \mathfrak{o}_{2n}$ can be also obtained in this way -  cf. \cite{bcdweb}. We do not do this here.
\end{rem}

The following proposition follows by the CKM principle.
\begin{prop}\label{symsw}
Let $\dot{\mathcal{S}}_{C} (n,d)$ be the full subcategory of $\Rep \mathcal{B}_{d}^{(-2n)}$ whose objects are the weight spaces of $\bigotimes^d \C^{2n}$. Then $\dot{\mathcal{S}}_{C} (n,d)$ is isomorphic to the category defined
\begin{itemize}
\item
Objects: Sequences $\lambda \in \Z^n$ in which $\sum_i |\lambda_i| = d - 2j$ for some $0 \leq j \leq \floor{\frac{d}{2}}$.
\item
Morphisms: Morphisms from $\lambda$ to $\mu$ are $\C$-linear combinations of $\sp_{2n}$-ladders connecting $\lambda$ to $\mu$, and satisfying the relation
\begin{align*}
\begin{tikzpicture}[baseline=20, scale=0.75]
\laddercoordinates{3}{1}
\node[below] at (l00) {$\nu_{1}$};
\node[below] at (l20) {$\nu_{n}$};
\ladderI{0}{0};
\node at ($(l10)+(0,1)$) {$\cdots$};
\ladderI{2}{0};
\end{tikzpicture} 
&= 0
&\text{if $\sum_i |\nu_i| \neq d - 2j$ for any $0 \leq j \leq \floor{\frac{d}{2}}$}
\end{align*}
\end{itemize}
Moreover, if $2n \geq d-1$ then $\dot{\mathcal{S}}_{C} (n,d)$ is a pre-Karoubi subcategory of 
$\Rep \mathcal{B}_{d}^{(-2n)}$.
\end{prop}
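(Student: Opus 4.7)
The plan is to apply the CKM Principle (Theorem \ref{CKM}) together with its Karoubi corollary (Corollary \ref{karckm}) directly to the $(\sp_{2n},\mathcal{B}_d^{(-2n)})$-bimodule $W = \bigotimes^d \C^{2n}$. As recalled from \cite{doty3} in Section \ref{sym}, $W$ admits the saturated multiplicity-free decomposition with factors $V^{\lambda} \otimes \Delta^{\lambda}$ indexed by $\lambda \in \Lambda^{+}(n,d-2j)$, $0 \le j \le \lfloor d/2 \rfloor$. Hence Theorem \ref{CKM} applies and produces an isomorphism $\dU^{W} \sp_{2n} \cong \dot{\mathcal{S}}_C(n,d)$.

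It then remains to identify $\dU^{W} \sp_{2n}$ with the category described in the statement. For this I would invoke the $\sp_{2n}$-ladder presentation of $\dU \sp_{2n}$ developed earlier in this section: morphisms are $\C$-linear combinations of $\sp_{2n}$-ladders modulo the listed relations. Passing to the quotient $\dU^{W} \sp_{2n}$ amounts to restricting to objects in $\Pi(W)$ and imposing $1_\nu = 0$ for every $\nu \notin \Pi(W)$. Since
\[
\Pi(W) = \bigcup_{j=0}^{\lfloor d/2 \rfloor} \{\nu \in \Z^n : \textstyle\sum_i |\nu_i| = d-2j\},
\]
the condition $\nu \notin \Pi(W)$ is exactly the vanishing condition stated in the proposition, so the two categories coincide. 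Unlike the $\gl_n$ case (Proposition \ref{scwe}) there is no need for a separate connectivity argument, because the statement already lists the full condition $\nu \notin \Pi(W)$ rather than a weaker sufficient condition.

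For the pre-Karoubi conclusion, I would assume $2n \geq d-1$ and verify the hypotheses of Corollary \ref{karckm}. The algebra $\mathcal{B}_d^{(-2n)}$ is finite-dimensional by construction and acts faithfully on $W$ by \cite{brown}, which identifies $\mathcal{B}_d^{(-2n)}$ with $\End_{\sp_{2n}} W$. Semisimplicity follows since, by Schur's lemma applied to the multiplicity-free decomposition, $\End_{\sp_{2n}} W \cong \bigoplus_{j,\lambda} \End(\Delta^{\lambda})$ is a direct sum of matrix algebras. Corollary \ref{karckm} then yields the equivalence between the Karoubi envelope of $\add$-$\dot{\mathcal{S}}_C(n,d)$ and $\Rep \mathcal{B}_d^{(-2n)}$, which is precisely the pre-Karoubi statement.

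The main obstacle is really preparatory: the nontrivial input is the $\sp_{2n}$-ladder presentation of $\dU \sp_{2n}$ (sketched above) and the invocation of \cite{doty3} and \cite{brown}. Once those are in place, the proposition is a direct application of the general CKM machinery, with no further intricate combinatorial argument required.
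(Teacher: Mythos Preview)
Your proposal is correct and matches the paper's approach exactly: the paper's entire proof is the single sentence ``The following proposition follows by the CKM principle,'' and you have faithfully unpacked what that means by invoking Theorem~\ref{CKM}, the $\sp_{2n}$-ladder presentation of $\dU\sp_{2n}$, and Corollary~\ref{karckm} together with the inputs from \cite{doty3} and \cite{brown}. Your observation that no connectivity simplification (analogous to Proposition~\ref{scwe}) is needed here is also correct, since the vanishing condition in the statement already coincides with $\nu\notin\Pi(W)$.
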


There is a bifunctor $\cdot || \cdot : \dot{\mathcal{S}}_{C} (n,d) \times \dot{\mathcal{S}}_{C} (m,d') \rightarrow \dot{\mathcal{S}}_{C} (n+m,d+d')$ defined by horizontal juxtapostion of $\sp_{2n}$-ladders. 

\begin{prop} 
The bifunctor $\cdot || \cdot $ is equal to the bifunctor 
\[
\Ind_{\mathcal{B}_{d}^{(-2n)} \times \mathcal{B}_{d'}^{(-2m)}}^{\mathcal{B}_{d+d'}^{(-2n-2m)}} (\cdot \boxtimes \cdot) : \dot{\mathcal{S}}_{C} (n,d) \times \dot{\mathcal{S}}_{C} (m,d') \rightarrow \dot{\mathcal{S}}_{C} (n+m,d+d').
\]
\end{prop}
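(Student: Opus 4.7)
The strategy mirrors the proof of Proposition \ref{bjm}. It suffices to verify that the two bifunctors agree on objects and on a generating family of morphisms of $\dot{\mathcal{S}}_C(n, d)$. Since every morphism is a $\C$-linear combination of compositions of the generators $E_i^{(r)} 1_\lambda$, $F_i^{(r)} 1_\lambda$, $X_j 1_\lambda$, $Y_j 1_\lambda$, one only needs to check bifunctoriality for horizontal juxtaposition of an identity against each of these four families (and symmetrically for the second factor). Throughout, these generators are regarded as elements of $\sp_{2(n+m)}$ via the block embedding $\sp_{2n} \oplus \sp_{2m} \hookrightarrow \sp_{2(n+m)}$, which preserves the chosen symplectic form.

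On objects, the decomposition $\C^{2(n+m)} = \C^{2n} \oplus \C^{2m}$ of $\sp_{2n} \oplus \sp_{2m}$-modules expands $\bigotimes^{d+d'} \C^{2(n+m)}$ into a direct sum indexed by the distribution of the $d+d'$ tensor positions into $\C^{2n}$-type and $\C^{2m}$-type slots. Restricting to the $(\lambda, \mu)$-weight space and singling out the summand in which the first $d$ slots are $\C^{2n}$-type and the last $d'$ are $\C^{2m}$-type yields a copy of $(\bigotimes^d \C^{2n})_\lambda \boxtimes (\bigotimes^{d'} \C^{2m})_\mu$ as a $\mathcal{B}_d^{(-2n)} \otimes \mathcal{B}_{d'}^{(-2m)}$-module; the parameter assignment is consistent because the Brauer contractions $\sum_{k=1}^{n+m}(v_{-k} \otimes v_k - v_k \otimes v_{-k})$ restrict to $\sum_{k=1}^n$ on $\C^{2n}$-type slots and to $\sum_{k=1}^m$ on $\C^{2m}$-type slots. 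Translating this summand by a set of coset representatives of $S_d \times S_{d'}$ in $S_{d+d'}$, which permutes the summands transitively, identifies $(\bigotimes^{d+d'} \C^{2(n+m)})_{(\lambda, \mu)}$ with $\Ind((\bigotimes^d \C^{2n})_\lambda \boxtimes (\bigotimes^{d'} \C^{2m})_\mu)$.

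On morphisms, the above object-level analysis supplies a canonical inclusion
\[
\iota: (\bigotimes^d \C^{2n})_\lambda \boxtimes (\bigotimes^{d'} \C^{2m})_\mu \hookrightarrow (\bigotimes^{d+d'} \C^{2(n+m)})_{(\lambda, \mu)}, \quad v \otimes w \mapsto v \otimes w,
\]
of $\mathcal{B}_d^{(-2n)} \otimes \mathcal{B}_{d'}^{(-2m)}$-modules. For each generator, for instance $E_i^{(r)} 1_\lambda || 1_\mu$, one checks that $E_i^{(r)} 1_{(\lambda, \mu)}$ is the unique $\mathcal{B}_{d+d'}^{(-2(n+m))}$-equivariant morphism making the diagram
\[
\xymatrix{
(\bigotimes^{d+d'} \C^{2(n+m)})_{(\lambda, \mu)} \ar[rr]^{E_i^{(r)} 1_{(\lambda, \mu)}} && (\bigotimes^{d+d'} \C^{2(n+m)})_{(\lambda + r\alpha_i, \mu)} \\
(\bigotimes^d \C^{2n})_\lambda \boxtimes (\bigotimes^{d'} \C^{2m})_\mu \ar[u]^{\iota} \ar[urr]_{(E_i^{(r)} 1_\lambda, 1_\mu)}
}
\]
commute; the universal property of induction then identifies this with $\Ind(E_i^{(r)} 1_\lambda \boxtimes 1_\mu)$. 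The analogous verifications for $F_i, X_j, Y_j$ and for generators acting on the second factor are parallel, using that each of these elements, sitting in $\sp_{2n}$ or $\sp_{2m}$, preserves the block $\C^{2n} \oplus \C^{2m}$. The main obstacle is the careful bookkeeping that establishes $\iota$ as equivariant for the Brauer algebras despite the parameter mismatch $-2n, -2m, -2(n+m)$ — this is precisely where the contraction-splitting observation above is essential.
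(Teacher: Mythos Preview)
Your approach follows the paper's own directive (``same way as Proposition~\ref{bjm}''), so the overall strategy matches. However, the adaptation to Brauer algebras runs into genuine obstacles that your argument does not overcome. The induction functor in the statement presupposes an algebra homomorphism $\mathcal{B}_{d}^{(-2n)} \times \mathcal{B}_{d'}^{(-2m)} \to \mathcal{B}_{d+d'}^{(-2(n+m))}$, but horizontal juxtaposition of diagrams is \emph{not} multiplicative when the loop parameters differ: a closed loop arising in the left block contributes $-2n$ in the source and $-2(n+m)$ in the target. Your ``contraction-splitting'' claim is likewise incorrect as stated: applying $c_{j,j+1}\in\mathcal{B}_{d+d'}^{(-2(n+m))}$ to $\iota(v\otimes w)$ with $j,j{+}1\le d$ produces the full sum over $k=1,\dots,n+m$ in those slots, and the terms with $k>n$ lie outside the image of $\iota$. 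So $\iota$ is not equivariant for the contraction generators, and the commuting-triangle argument breaks down precisely at the new generators that distinguish the Brauer case from the symmetric-group case.

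The object-level identification already fails. Translating the $S=\{1,\dots,d\}$ summand by coset representatives of $S_d\times S_{d'}$ in $S_{d+d'}$ reaches only the summands with $|S|=d$, whereas the $(\lambda,\mu)$-weight space in general receives contributions from other values of $|S|$. Concretely, take $n=m=1$, $d=2$, $d'=0$, $\lambda=0\in\Z$, $\mu=0\in\Z$: the space $(\C^{4})^{\otimes 2}_{(0,0)}$ is four-dimensional (spanned by $v_{1}\otimes v_{-1},\,v_{-1}\otimes v_{1},\,v_{2}\otimes v_{-2},\,v_{-2}\otimes v_{2}$), while $(\C^{2})^{\otimes 2}_{0}\boxtimes\C$ is two-dimensional and the putative induction is along an index-one inclusion. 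These are not cosmetic gaps; they indicate that the passage from Proposition~\ref{bjm} to the Brauer setting requires more than a routine substitution, and that the statement itself likely needs a more careful formulation before a proof along these lines can go through.
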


\begin{proof}
This can be proven in the same way as Proposition \ref{bjm}.
\end{proof}

\subsection{Mixed Schur-Weyl Duality and the CKM principle}\label{msdckm}

Consider the $\gl_n$-module
\[
V^{r,s} := \textstyle\bigotimes^{r} \C^n \otimes \textstyle\bigotimes^{s} \C^{*n}
\]
Benkart et. al. \cite{mixsw} described the commutant of $V^{r,s}$, for $n \geq r+s$, to be the walled Brauer algebra $\mathcal{B}_{r,s}^{(n)}$. This is the subalgebra of $\mathcal{B}_{r+s}^{(n)}$ spanned by those Brauer diagrams in which
\begin{enumerate}
\item
All horizontal edges pair one of the $r$ leftmost vertices with one of the $s$ rightmost vertices.
\item
No vertical edges pair one of the $r$ leftmost vertices with one of the $s$ rightmost vertices.
\end{enumerate} 

Write $\{ v_{-1}, \ldots, v_{-n}\}$ for the standard basis of $\C^{*n}$ dual to the standard basis 
$\{ v_1 , \ldots , v_n\}$ of $\C^n$. 
The right action of $\mathcal{B}_{r,s}^{(n)}$ on $V^{r,s}$ is defined in the same way as the right action of $\mathcal{B}_{d}^{(2n)}$ on $\bigotimes^{d} \C^{2n}$. 

Use square brackets for pairs of sequences. Define the set
\[
\Pi (r,s) := \{ [\mu, \nu] ~| ~\mu \vdash r, ~\nu \vdash s, ~ l(\mu) + l(\nu) \leq n\}
\]
\cite{mixsw} showed that $V^{r,s}$ has a multiplicity free decomposition
\[
V^{r,s} = \bigoplus_{j=0}^{\min\{ r, s\}} \bigoplus_{[\mu , \nu] \in \Pi (r-j, s-j)} V^{[\mu, \nu]} \otimes \Delta^{[\mu, \nu]}
\]
where the $\Delta^{[\mu, \nu]}$ are irreducible $\mathcal{B}_{r,s}^{(n)}$ modules and $V^{[\mu, \nu]}$ is the irreducible $\gl_n$-module with highest weight
$
(
\mu_1 , \ldots , \mu_{l(\mu)}, 0 , \ldots , 0 , - \nu_{l(\nu)} , \ldots , - \nu_1 
) \in \Z^n
$.
It is not hard to check that $V^{r,s}$ has weights
\[
\Pi (V^{r,s}) = \bigcup_{j=0}^{\min\{ r, s\}} \{
\lambda \in \Z^n | \sum_{i} |\lambda_i|=r+s-2j , \sum_i \lambda_i = r-s
 \}
\]
and that $V^{r,s}$ is saturated. Furthermore \cite{mixsw} showed that if $n \geq r+s$ then 
$\mathcal{B}_{r,s}^{(n)} = \End_{\gl_n} V^{r,s}$. Hence we have the following proposition.

\begin{prop}\label{msw}
Let $\dot{\mathcal{S}} (n;r,s)$ be the full subcategory of $\Rep \mathcal{B}_{r,s}^{(n)}$ whose objects are the weight spaces of $V^{r,s}$. Then $\dot{\mathcal{S}} (n;r,s)$ is isomorphic to the category defined
\begin{itemize}
\item
Objects: Sequences $\lambda \in \Pi (V^{r,s})$.
\item
Morphisms: Morphisms from $\lambda$ to $\mu$ are $\C$-linear combinations of $\gl_n$-ladders connecting $\lambda$ to $\mu$, and satisfying the relation
\begin{align*}
\begin{tikzpicture}[baseline=20, scale=0.75]
\laddercoordinates{3}{1}
\node[below] at (l00) {$\nu_{1}$};
\node[below] at (l20) {$\nu_{n}$};
\ladderI{0}{0};
\node at ($(l10)+(0,1)$) {$\cdots$};
\ladderI{2}{0};
\end{tikzpicture} 
&= 0
&\text{if $\nu \notin \Pi (V^{r,s})$}
\end{align*}
\end{itemize}
Moreover, if $n \geq r+s$ then $\dot{\mathcal{S}} (n; r,s)$ is a pre-Karoubi subcategory of 
$\Rep \mathcal{B}_{r,s}^{(n)}$.
\end{prop}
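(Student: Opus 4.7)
The plan is to run the same argument used for Proposition \ref{scwe} (Schur--Weyl in type $A$) and Proposition \ref{symsw} (Brauer in type $C$), now using mixed Schur--Weyl duality. The ingredients have essentially all been assembled just above the statement: $V^{r,s}$ is recalled to be a saturated $(\gl_n, \mathcal{B}_{r,s}^{(n)})$-bimodule with multiplicity free decomposition, so the CKM Principle (Theorem \ref{CKM}) produces a canonical isomorphism $\dot{\mathcal{S}}(n;r,s) \cong \dot{U}^{V^{r,s}} \gl_n$. Combining this with the $\gl_n$-ladder presentation of $\dot{U}\gl_n$ from Section \ref{Ugln} translates the algebraic quotient into a diagrammatic one.

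The only nontrivial verification is that, in the ladder calculus, the two-sided ideal $I_{V^{r,s}} = \langle 1_\nu \mid \nu \notin \Pi(V^{r,s}) \rangle$ coincides with the ideal of morphisms factoring through some object $\nu \notin \Pi(V^{r,s})$. This is exactly the step carried out in the proof of Proposition \ref{scwe}, and the same reasoning applies here: a $\gl_n$-ladder automatically factors through each of its horizontal cross-sections, so any ladder passing through an intermediate sequence $\nu \notin \Pi(V^{r,s})$ is killed once we impose $1_\nu = 0$. Conversely, inserting a trivial horizontal section realises the converse inclusion. One small sanity check is that the total-sum constraint $\sum_i \nu_i = r-s$ is automatically preserved by $\gl_n$-ladders (since the generators $E_i, F_i$ preserve $\sum_i \nu_i$), so the only nontrivial content of the relation is the condition $\sum_i |\nu_i| = r+s - 2j$ for some $0 \leq j \leq \min\{r,s\}$.

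For the pre-Karoubi half, the plan is to apply Corollary \ref{karckm}, whose hypotheses require $A = \mathcal{B}_{r,s}^{(n)}$ to be finite dimensional, semisimple, and to act faithfully on $V^{r,s}$. Finite-dimensionality is clear from the diagrammatic basis. Faithfulness under the assumption $n \geq r+s$ is precisely the theorem of Benkart et al.\ \cite{mixsw} cited in Section \ref{msdckm}. Semisimplicity is then automatic: the identification $\mathcal{B}_{r,s}^{(n)} = \End_{\gl_n} V^{r,s}$ together with the complete reducibility of $V^{r,s}$ as a $\gl_n$-module forces $\mathcal{B}_{r,s}^{(n)}$ to be a direct sum of matrix algebras over $\C$ (exactly as in the proof of Lemma \ref{Wed} via Schur's lemma and Wedderburn). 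Corollary \ref{karckm} then upgrades the categorical isomorphism to an equivalence between the Karoubi envelope of the additive closure of $\dot{\mathcal{S}}(n;r,s)$ and $\Rep \mathcal{B}_{r,s}^{(n)}$, which is the definition of being a pre-Karoubi subcategory.

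There is no genuinely new obstacle: the argument is structurally identical to Propositions \ref{scwe} and \ref{symsw}, with the walled Brauer algebra replacing $\C[S_d]$ or $\mathcal{B}_d^{(-2n)}$. The only item meriting slight care is confirming semisimplicity of $\mathcal{B}_{r,s}^{(n)}$ in the stated range; here it is not invoked as an external theorem but is derived cleanly from the duality once faithfulness is known.
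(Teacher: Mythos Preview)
Your proposal is correct and follows exactly the approach the paper intends: the paper does not give an explicit proof but simply records the proposition as a consequence of the CKM Principle (Theorem~\ref{CKM}) and Corollary~\ref{karckm}, after having verified saturation, multiplicity-freeness, and (for $n \geq r+s$) faithfulness via \cite{mixsw}. Your elaboration---including the observation that $\sum_i \nu_i$ is preserved by ladders and the derivation of semisimplicity from $\mathcal{B}_{r,s}^{(n)} = \End_{\gl_n} V^{r,s}$---is a faithful unpacking of that one-line deduction.
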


By giving a proof similar to that of Proposition \ref{bjm} we get the following result.

\begin{prop}
The bifunctor $\dot{\mathcal{S}} (n;r,s) \times \dot{\mathcal{S}} (m;r',s') \rightarrow \dot{\mathcal{S}} (n+m;r+r',s+s')$ defined by horizontal juxtaposition of $\gl_n$-ladders is equal to the bifunctor
\[
\Ind_{\mathcal{B}_{r,s}^{(n)} \times \mathcal{B}_{r',s'}^{(m)}}^{\mathcal{B}_{r+r', s+s'}^{(m+n)}} (\cdot \boxtimes \cdot) : \dot{\mathcal{S}} (n;r,s) \times \dot{\mathcal{S}} (m;r',s') \rightarrow \dot{\mathcal{S}} (n+m;r+r',s+s').
\]
\end{prop}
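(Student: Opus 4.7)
The plan is to mimic the proof of Proposition \ref{bjm} essentially verbatim, adjusting only for the appearance of dual factors and for the walled Brauer algebras in place of symmetric group algebras. Since both bifunctors are $\C$-linear and agree on the monoidal structure of objects (by concatenation of weight sequences), it suffices to check that they agree on the generating morphisms $E_j^{(r)} 1_\lambda$ and $F_j^{(r)} 1_\lambda$ of $\dot{\mathcal{S}}(n;r,s)$, tensored with identities on the other factor.

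The key tool, as before, is a canonical $\C[\mathcal{B}_{r,s}^{(n)} \times \mathcal{B}_{r',s'}^{(m)}]$-equivariant inclusion
\[
\iota : V^{r,s}_{\lambda} \boxtimes V^{r',s'}_{\mu} \hookrightarrow V^{r+r',s+s'}_{(\lambda,\mu)}
\]
obtained by taking a pure tensor $v \otimes w$ (where $v$ is a tensor of vectors and dual vectors in $\C^n$, $\C^{*n}$, and similarly $w$ with $\C^m$, $\C^{*m}$) and reorganizing it as an element of $(\C^{n+m})^{\otimes r+r'} \otimes (\C^{*(n+m)})^{\otimes s+s'}$, placing the $\C^n$ and $\C^m$ factors into the first $r+r'$ positions and the $\C^{*n}$ and $\C^{*m}$ factors into the last $s+s'$ positions. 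Once $\iota$ is established, Frobenius reciprocity ensures that any $\mathcal{B}_{r+r',s+s'}^{(n+m)}$-equivariant map out of the induced module is determined uniquely by its restriction through $\iota$. Consequently, to identify $E_j^{(r)} 1_\lambda \, || \, 1_\mu$ with the corresponding induced morphism it suffices to check that the two agree after precomposition with $\iota$, which reduces to the pure-tensor computation $(v \otimes w) \mapsto E_j^{(r)} 1_\lambda(v) \otimes w$, and likewise for $1_\lambda \, || \, E_j^{(r)} 1_\mu$ and the $F$-generators.

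The main obstacle is verifying the equivariance of $\iota$ for the action of the cup-cap generators $c_{ij}$ of the walled Brauer algebras. Because $\iota$ involves permuting dual-tensor factors past regular-tensor factors, one must verify that a cup-cap in $\mathcal{B}_{r,s}^{(n)}$ (which pairs a vector-position with a dual-vector-position on the same side of the wall, i.e., both in the $\C^n$/$\C^{*n}$ block) acts on the image in $V^{r+r',s+s'}_{(\lambda,\mu)}$ as the corresponding cup-cap of $\mathcal{B}_{r+r',s+s'}^{(n+m)}$. This comes down to observing that $\C^{n+m} = \C^n \oplus \C^m$ as vector spaces with compatible dual bases, so the contraction $\sum_{k=1}^{n+m}(v_k \otimes v_{-k} + v_{-k} \otimes v_k)$ applied to a pure tensor coming from the image of $\iota$ restricts to the analogous contraction on the appropriate $\C^n$ or $\C^m$ summand, with no cross-terms. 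The remaining (symmetric-group) generators of the walled Brauer algebras act by permutation, and their equivariance is immediate from the definition of $\iota$ as a reorganization of tensor factors.

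Once equivariance of $\iota$ is in hand, the proof closes as in Proposition \ref{bjm}: one displays the defining commutative triangle for the induced map and observes that both the horizontal-juxtaposition morphism and the induction-of-the-generator morphism fit into this triangle, hence coincide by uniqueness. This yields the agreement on all generators and therefore on all morphisms of $\dot{\mathcal{S}}(n;r,s) \times \dot{\mathcal{S}}(m;r',s')$.
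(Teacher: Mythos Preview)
Your approach is exactly what the paper intends: its entire proof is the sentence ``By giving a proof similar to that of Proposition~\ref{bjm} we get the following result,'' and you have spelled out that analogy in reasonable detail.

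That said, there is a gap in your treatment of the cup--cap generators which the paper's one-line deferral does not address either.  The element $c_{ab}$ acts by an evaluation (cup) followed by a coevaluation (cap).  Your ``no cross-terms'' observation is correct for the evaluation half: pairing a $\C^n$-vector against a $\C^{*n}$-covector inside $\C^{n+m}$ yields the same scalar as in $\C^n$.  But the coevaluation half inserts the full element $\sum_{k=1}^{n+m} v_k\otimes v_{-k}$, and the summands with $k>n$ place a $\C^m$-vector in what was a ``$\C^n$-slot'' of $\iota(v\otimes w)$; these terms lie in the correct weight space $V^{r+r',\,s+s'}_{(\lambda,\mu)}$ but \emph{outside} the image of $\iota$.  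So $\iota$ is not literally $\bigl(\mathcal B_{r,s}^{(n)}\times\mathcal B_{r',s'}^{(m)}\bigr)$-equivariant in the way you assert.  Relatedly, the naive diagrammatic inclusion $\mathcal B_{r,s}^{(n)}\to\mathcal B_{r+r',\,s+s'}^{(n+m)}$ is not an algebra homomorphism once closed loops can form (a loop is worth $n$ on the source side and $n+m$ on the target side), so even the meaning of $\Ind$ here needs more care than either your sketch or the paper supplies.  None of this invalidates the strategy, but your write-up should acknowledge the issue rather than claim it away.
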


\begin{rem}
The $\lambda$-weight space, $V_{\lambda}^{r,s}$, of $V^{r,s}$ has the following direct sum decomposition of $\C [S_{r}] \times \C[S_{s}]$-modules,
\begin{equation}\label{mixweight}
V_{\lambda}^{r,s} = \bigoplus_{\substack{\mu \in \Lambda (n , r)\\ \nu \in \Lambda (n,s) \\ \lambda = \mu - \nu}}
M^{\mu} \boxtimes M^{\nu}
\end{equation}
A more explicit description of these weight spaces as $\mathcal{B}_{r,s}^{(n)}$-modules can be given in the case $r=s$.
Indeed there is an isomorphism of $\gl_n$-bimodules $V^{d,d} \rightarrow \bigotimes^{d} \gl_n$ defined
\[
v_{i_1} \otimes \cdots \otimes v_{i_d} \otimes v_{-{j_1}} \otimes \cdots \otimes v_{-j_d}
\mapsto
e_{i_1, j_1} \otimes \cdots \otimes e_{i_d, j_d}
\]
A basis element $v = e_{i_1, j_1} \otimes \cdots \otimes e_{i_d, j_d}$ of $\bigotimes^{d} \gl_n$ corresponds to a dissection
\begin{align*}
T_v &= \{ T^{11}_v , \ldots , T^{1n}_v , T^{21}_v , \ldots , T^{nn} \},
&\text{where $T^{ij}_v = \{k | e_{i_k, j_k} =e_{ij} \}$} 
\end{align*}
Hence the $\lambda$-weight space, $N^{\lambda}$, of $\bigotimes^{d} \gl_n$ has a basis of $A$-tabloids, where 
\[
A \in \bigcup_{\substack{\mu, \nu \in \Lambda (n , d)\\ \lambda = \mu - \nu}} A^{\mu}_{\nu}.
\]
A cute exercise we leave with the reader is to describe the $\mathcal{B}_{d,d}^{(n)}$ action on these tabloids.
\end{rem}

\bibliographystyle{alpha}
\bibliography{bibliography/bibliography}

\begin{thebibliography}{MOY98}

\bibitem[AR17]{eagle}
Cosima Aquilino, Rebecca Reischuk.
\newblock The monoidal structure on strict polynomial functors.
\newblock  {\em J. Algebra} 485:213-229, 2017
\newblock \arxiv{1503.05108v1}

\bibitem[BCHLLS94]{mixsw}
Georgia Benkart, Manish Chakrabarti, Thomas Halverson, Robert Leduc, Chanyoung Lee, Jeffrey Stroomer.
\newblock Tensor product representations of general linear groups and their connections with Brauer algebras.
\newblock {\em J. Algebra} 166:529-567, 1994

\bibitem[Bra37]{origin}
Richard Brauer.
\newblock On algebras which are connected with the semisimple continuous groups.
\newblock {\em Annals of Mathematics} 38:857-872, 1937

\bibitem[Bro56]{brown}
William Brown.
\newblock The semisimplicity of $\omega^{n}_{f}$.
\newblock {\em Annals of Mathematics} 63:324-335, 1956

\bibitem[CKM14]{mainpaper}
Sabin Cautis, Joel Kamnitzer, Scott Morrison.
\newblock Webs and quantum skew Howe duality.
\newblock {\em Mathematische Annalen} 360:351-390, 2014
\newblock \arxiv{1210.6437v4}

\bibitem[DGS06]{doty3}
Stephen Doty, Anthony Giaquinto, John Sullivan.
\newblock Presenting Schur algebras in types B, C, D.
\newblock {\em Advances in Mathematics} 206:434-454, 2006
\newblock \arxiv{math/0504075v1}

\bibitem[DGS09]{doty4}
Stephen Doty, Anthony Giaquinto, John Sullivan.
\newblock On the defining relations for generalized $q$-Schur algebras.
\newblock {\em Advances in Mathematics} 221:955-982, 2009
\newblock  \arxiv{0801.0005v3}

\bibitem[Dot03]{doty}
Stephen Doty.
\newblock Presenting generalized $q$-Schur algebras.
\newblock {\em Representation Theory} 7:196-213 (electronic), 2003.
\newblock \arxiv{math/0305208v1}

\bibitem[Dot12]{doty2}
Stephen Doty.
\newblock A characteristic-free decomposition of tensor space as a Brauer algebra module.
\newblock {\em J. Lie Theory} 22(3):869-882, 2012
\newblock \arxiv{1104.4316v1}

\bibitem[Don86]{donkin}
Stephen Donkin.
\newblock On Schur Algebras and Related Algebras, I.
\newblock {\em J. Algebra} 104(2):310-328, 1986

\bibitem[JK81]{jks}
Gordon Douglas James, Adalbert Kerber.
\newblock {\em The Representation Theory of the Symmetric Group},
\newblock volume 16 of {\em Encyclopedia of Mathematics and its Applications}.
\newblock Addison-Wesley, Reading, MA, 1981

\bibitem[ST17]{bcdweb}
Antonio Sartori, Daniel Tubbenhauer.
\newblock Webs and $q$-Howe dualities in types BCD.
\newblock  2017
\newblock \arxiv{math/1701.02932}



\end{thebibliography}

\vspace{1cm}

\end{document}